\renewcommand{\epsilon}{\varepsilon}
\newcommand{\N}{\mathbb{N}}
\newcommand{\R}{\mathbb{R}}
\newcommand{\1}{\mathbb{1}}
\newcommand{\Linop}{\mathbb{L}}
\DeclareMathOperator{\diag}{\mathrm{diag}}
\DeclareMathOperator{\argmin}{\mathrm{argmin}}
\newcommand{\norm}[1]{\|#1\|}
\renewcommand{\epsilon}{\varepsilon}
\newlist{assumption}{enumerate}{1}
\setlist[assumption]{label=(\textsc{a}\arabic*)}
\crefname{assumptioni}{Assumption}{Assumptions}
\pgfplotsset{compat=newest}
\pgfplotsset{plot coordinates/math parser=false}
\pgfplotsset{every tick label/.append style={font=\footnotesize}}
\def\thetitle{Bouligand--Levenberg--Marquardt iteration for a non-smooth ill-posed inverse problem} 
\title{\thetitle}
\author{Christian Clason\thanks{Faculty of Mathematics, University of Duisburg-Essen, Thea-Leymann-Strasse 9, 45127 Essen, Germany (\email{christian.clason@uni-due.de}, \orcid{0000-0002-9948-842})}
    \and Vu Huu Nhu\thanks{Faculty of Mathematics, University of Duisburg-Essen, Thea-Leymann-Strasse 9, 45127 Essen, Germany (\email{huu.vu@uni-due.de}, \orcid{0000-0003-4279-3937})}
}
\date{2019-06-23}
\begin{document}
\maketitle

\begin{abstract}
    In this paper, we consider a modified Levenberg--Marquardt method for solving an ill-posed inverse problem where the forward mapping is not Gâteaux differentiable. By relaxing the standard assumptions for the classical smooth setting, we derive asymptotic stability estimates that are then used to prove the convergence of the proposed method. This method can be applied to an inverse source problem for a non-smooth semilinear elliptic PDE where a Bouligand subdifferential can be used in place of the non-existing Fréchet derivative, and we show that the corresponding \emph{Bouligand--Levenberg--Marquardt iteration} is an iterative regularization scheme. Numerical examples illustrate the advantage over the corresponding Bouligand--Landweber iteration.
\end{abstract}

\section{Introduction}

We consider the inverse problems of the form
\begin{equation}
    \label{eq:inverse-pro}
    F(u) = y^\delta, 
\end{equation}
where $F: D(F) \subset U \to Y$ is a \emph{non-smooth} (i.e., not necessarily Gâteaux differentiable) nonlinear operator between Hilbert spaces and the available data $y^\delta$ are some approximations of the corresponding true data $y^\dag := F(u^\dag)$. Furthermore, $D(F)$ denotes the domain of $F$ and $u^\dag$ is the unknown true solution that needs to be reconstructed. 

A typical example of problem \eqref{eq:inverse-pro} is the case where $U = Y := L^2(\Omega)$ and $F$ is the solution operator of the non-smooth semilinear elliptic equation
\begin{equation}
    \label{eq:maxpde-intro}
    \begin{aligned}
        -\Delta y + \max(0,y) &= u \quad\text{in }\Omega, \quad y \in H^1_0(\Omega)
    \end{aligned}
\end{equation}
with $u \in L^2(\Omega)$ and a bounded domain $\Omega\subset\R^d$, $d \in \{2,3\}$. In this case, $F$ is not G\^{a}teaux differentiable at $u^\dag$ if the set of $x \in \Omega$ such that $y^\dag (x) = 0$ has positive measure; see \cite[Prop.~3.4]{ClasonNhu2018}. Moreover, $F$ is completely continuous (see \cite[Lem.~3.2]{ClasonNhu2018}), and \eqref{eq:inverse-pro} is therefore \emph{ill-posed} in the sense that the solution to \eqref{eq:inverse-pro} does not depend continuously
on the data. A stable solution of \eqref{eq:inverse-pro} thus needs regularization techniques. Here we consider \emph{iterative} regularization techniques, which construct a sequence $\{u_n^\delta\}$ of approximations to $u^\dag$ and ensure stability by early stopping at an iteration index $N(\delta,y^\delta)$ chosen, e.g., according to Morozov's discrepancy principle; see, e.g., \cite{Engl1996,Kaltenbacher2008}. Iterative methods have the advantage over variational methods such as Tikhonov regularization that the selection of the regularization parameter (in this case, the stopping index) is part of the method and does not have to be performed by, e.g., checking a sequence of candidates or using additional information on the smoothness of the forward operator that is often not available. An iterative regularization method for \eqref{eq:maxpde-intro} of Landweber type (which can be interpreted as a generalized gradient descent) was proposed and analyzed in \cite{ClasonNhu2018}. However, like any first-order scheme, it usually requires a large number of iterations to satisfy the discrepancy principle, especially for small noise. This motivates considering iterative regularization methods of Newton type.

Recall that the Newton method for the smooth version of \eqref{eq:inverse-pro} with a continuously Fréchet differentiable operator $F$ reads as
\begin{equation*}
    F'(u_n^\delta)(u_{n+1}^\delta - u_n^\delta) = y^\delta - F(u_n^\delta),
\end{equation*}
where $F'(u):U\to Y$ denotes the Gâteaux derivative of $F$ at $u\in D(F)$.
However, if \eqref{eq:inverse-pro} is ill-posed, this equation is generally ill-posed as well and needs to be regularized.
Applying Tikhonov regularization leads to the \emph{Levenberg--Marquardt method}
\begin{equation}\label{eq:LM-method-var}
    u_{n+1}^\delta = \argmin\limits_{u \in D(F)} \norm{F'(u_n^\delta)(u - u_n^\delta) - y^\delta - F(u_n^\delta)}_Y^2 + \alpha_n \norm{u - u_n^\delta}_U^2
\end{equation}
or, equivalently,
\begin{equation}
    \label{eq:LM-method}
    u_{n+1}^\delta =u_n^\delta + \left(\alpha_n I + F'(u_n^\delta)^*F'(u_n^\delta) \right)^{-1}F'(u_n^\delta)^* \left(y^\delta - F(u_n^\delta)\right),
\end{equation}
where $\alpha_n >0$ is the Tikhonov parameter. For a linear operator, this method coincides with the non-stationary iterated Tikhonov method studied in, e.g., \cite{Brill87,HankeGroetsch1998}.
As noted above, for noisy data the iteration has to be terminated at a stopping index $N_\delta :=N(\delta,y^\delta)<\infty$ in order to be stable. Assuming that $\norm{y^\delta -y^\dag}_Y\leq \delta$ and that the Tikhonov parameters $\alpha_n$ are chosen via a Morozov discrepancy principle, \cite{Hanke1997} showed the \emph{regularization property} $u_{N_\delta}^\delta \to u^\dag$ as $\delta\to 0$ as well as the logarithmic estimate 
\begin{equation}
    \label{eq:log-esti}
    N_\delta = O\left(1 + |\log(\delta)| \right),
\end{equation}
provided that
\begin{equation}
    \label{eq:strong-gtcc}
    \norm{F(u_1) - F(u_2) - F'(u_2)(u_1-u_2)}_Y \leq c \norm{u_1-u_2}_U \norm{F(u_1)-F(u_2)}_Y 
\end{equation}
for all $u_1, u_2 \in \overline B_U(u^\dag ,\rho)$ and for some constants $c, \rho >0$. 
In \cite{Jin2010}, the regularization property as well as the logarithmic estimate \eqref{eq:log-esti} of the Levenberg--Marquardt method was shown under the a priori choice
\begin{equation}
    \label{eq:Tikhonov-choice}
    \alpha_n = \alpha_0 r^n, \quad n = 0,1,\ldots
\end{equation}
with $\alpha_0 >0$ and $r \in (0,1)$ and under the assumption that for any $u_1,u_2 \in \overline B_U(u^\dag ,\rho)$, there exists a bounded linear operator $Q(u_1,u_2): Y \to Y$ satisfying
\begin{equation}
    \label{eq:strong-gtcc2}
    F'(u_1) = Q(u_1,u_2)F'(u_2) \quad \text {and} \quad \norm{I - Q(u_1, u_2)}_{\Linop(Y)} \leq L \norm{u_1 - u_2}_U
\end{equation}
for some constant $L>0$. 
It is noted that the convergence analysis in \cite{Hanke1997,Jin2010} requires the \emph{stability} of the method, that is, there holds
\begin{equation*}
    u_{n}^\delta \to u_n \quad \text {as } \delta \to 0 \quad\text{for all } n \leq N(\delta, y^\delta) 
\end{equation*}
with $\delta$ small enough, where $u_n^\delta$ and $u_n$ are generated by the method corresponding to the noisy $(\delta >0)$ and the noise-free $(\delta=0)$ situations, respectively. The continuity of the derivative $F'$ (or more specifically, of the linear operator in the right-hand side of \eqref{eq:LM-method}) with respect to $u$ is therefore essential.

The purpose of this work is to present a \emph{modified Levenberg--Marquardt method} for solving \eqref{eq:inverse-pro} in the spirit of \cite{Scherzer1995,ClasonNhu2018}, where we replace the -- possibly nonexistent -- Fréchet derivative $F'(u)$ in \eqref{eq:LM-method} by another suitable bounded linear operator $G_u$. Our main aim is to show the regularization property of the proposed algorithm under the choice \eqref{eq:Tikhonov-choice} of Tikhonov parameters and conditions that relax \eqref{eq:strong-gtcc} and \eqref{eq:strong-gtcc2}. We also prove the logarithmic estimate \eqref{eq:log-esti} of the stopping index. 
However, unlike the situation in \cite{Hanke1997,Jin2010}, we lack the continuity of the mapping $ D(F) \ni u \mapsto G_u \in \Linop(U,Y)$. 
To overcome this essential difficulty, we shall combine a technique from \cite{Jin2010} with the approach in \cite{ClasonNhu2018} to prove \emph{asymptotic stability estimates} of iterates $u_n^\delta$; see \cref{sec:asym-stab} and \cref{prop:AS} in place of the missing stability of the method.
The proposed method is then applied to a non-smooth ill-posed inverse problem where the forward operator is the solution mapping of \eqref{eq:maxpde-intro}. In this case, the operator $G_u$ can be taken from the Bouligand subdifferential of the forward mapping and explicitly characterized by the solution of a suitable linearized PDE, see ~\cref{prop:Gu} below. We refer to this special case of the modified Levenberg--Marquardt method as \emph{Bouligand--Levenberg--Marquardt iteration}.

Let us briefly comment on related literature.
Newton-type methods, and in particular the Levenberg--Marquardt method, for approximately solving smooth nonlinear ill-posed problems have been extensively investigated in Hilbert spaces; see, e.g. \cite{Engl1996,Hanke1997,Kaltenbacher2008,Scherzer2011,LechleiterRieder2010,Rieder1999,Rieder2001,Rieder2005} and the references therein. 
More recently, inverse problems in Banach spaces have attracted increasing attention, and corresponding iterative regularization methods of Newton-type have been developed, e.g., in
\cite{Jin12,Jin13,Jin15,Jin16,Margotti14,MargottiRieder2015,Kaltenbacher09,Kaltenbacher10}. Considering \eqref{eq:LM-method-var} in Banach spaces (in particular, $L^1$ or the space of functions of bounded variation) or including additional constraints can lead to non-smooth optimization problems; however, none of the works so far has focused on inverse problems for non-smooth forward operators.

\paragraph*{Organization.} This paper is organized as follows. After briefly summarizing basic notation, 
we present the convergence analysis of the modified Levenberg--Marquardt method in \cref{sec:LMmethod}: \cref{sec:wellposed} is devoted to its well-posedness and the logarithmic estimate of the stopping index $N_\delta$; in \cref{sec:convergence-free}, we prove its convergence in the noise-free case; in \cref{sec:asym-stab} we verify its asymptotic stability estimates, which are crucial for the proof of the regularization property of the iterative method in \cref{sec:regularization}. \cref{sec:maxpde} introduces an application of the modified Levenberg--Marquardt method to the non-smooth ill-posed inverse source problem for \eqref{eq:maxpde-intro}. Finally, some numerical examples are provided in \cref{sec:num}.

\paragraph*{Notation.} For a Hilbert space $X$, we denote by $(\cdot,\cdot)_{X}$ and $\|\cdot\|_{X}$, respectively, the inner product and the norm on $X$. For a given $z$ belonging to a Banach space $Z$ and $\rho>0$, by $B_Z(z,\rho)$ and $\overline{B}_Z(z, \rho)$ we denote, respectively, the open and closed balls in $Z$ of radius $\rho$ centered at $z$. For each measurable function $u$ on $\Omega$ and a subset $T \subset \R$, the notation $\{u \in T \}$ stands for the sets of almost every $x \in \Omega$ at which $u(x)\in T$. Similarly, given measurable functions $u,v$ on $\Omega$ and subsets $T_1, T_2 \subset \R$, we denote the set of a.e. $x \in \Omega$ such that $u(x) \in T_1$ and $v(x) \in T_2$ by $\{ u \in T_1, v \in T_2 \}$. For a measurable set $S$ in $\R^d$, we write $|S|$ for the $d$-dimensional Lebesgue measure of $S$ and denote by $\1_S$ the characteristic function of the set $S$, i.e., $\1_S(s) = 1$ if $s \in S$ and $\1_S(s) = 0$ if $s \notin S$. The adjoint operator, the null space, and the range of a linear operator $G$ will be denoted by $G^*$, $\mathcal{N}(G)$, and $\mathcal{R}(G)$, respectively. Finally, we denote by $\Linop(X)$ and $\Linop(X,Y)$ the
set of all bounded linear operators from Hilbert space $X$ to itself and from $X$ to another Hilbert space $Y$, respectively.

\section{A modified Levenberg--Marquardt method} \label{sec:LMmethod} 

Let $U$ and $Y$ be real Hilbert spaces and $F$ a non-smooth mapping from $U$ to $Y$ with its domain $D(F) \subset U$. 
We consider the non-smooth ill-posed problem 
\begin{equation}
    \label{eq:inv-pro} 
    F(u) = y^\delta, 
\end{equation}
where the noisy data $y^\delta$ satisfy
\begin{equation}
    \label{eq:noise} 
    \| y^\delta - y^\dag \|_{Y} \leq \delta
\end{equation}
with $y^\dag \in \mathcal R(F)$. 
From now on, let $u^\dag$ be an arbitrary, but fixed, solution of \eqref{eq:inv-pro} corresponding to the exact data $y^\dag $. For a given number $\rho >0$, we denote by $S_\rho(u^\dag)$ the set of all solutions in $\overline B_U(u^\dag, \rho)$ of \eqref{eq:inv-pro} corresponding the exact data, that is,
\begin{equation*}
    S_\rho(u^\dag) := \left\{ u^* \in D(F): F(u^*) = y^\dag, \norm{u^* - u^\dag}_U \leq \rho \right \}.
\end{equation*}

Throughout this work, we make the following assumptions on $F$.
\begin{assumption}
\item\label{ass:gtcc} 
    There exists a constant $\rho_0 >0$ such that $\overline B_U(u^\dag , \rho_0) \subset D(F)$. Furthermore, there exists a family of bounded linear operators $\{G_u: u \in \overline B_U(u^\dag,\rho)\} \subset \Linop(U,Y)$  such that for all $\rho \in (0,\rho_0]$ and $u,\hat u$ in $\overline B_U(u^\dag,\rho)$, there holds the \emph{generalized tangential cone condition}
    \begin{equation}
        \label{eq:GTCC}
        \|F(\hat u) - F(u) - G_{u}(\hat u - u) \|_{Y} \leq \eta(\rho) \|F(\hat u) - F(u) \|_{Y}
        \tag{GTCC}
    \end{equation}
    for some non-decreasing function $\eta: (0,\rho_0] \to (0,\infty)$ satisfying
    \begin{equation}
        \label{eq:upper_bounded_GTCC}
        \eta_0 := \eta(\rho_0) < 1.
    \end{equation} 
    Moreover, for any pair $u_1, u_2 \in \overline B_U(u^\dag,\rho_0)$, there exists a bounded linear operator $Q(u_1,u_2) \in \Linop(Y)$ such that
    \begin{equation}
        \label{eq:Q-trans}
        G_{u_1} = Q(u_1,u_2)G_{u_2}
    \end{equation} 
    and that for all $\rho\in (0,\rho_0]$,
    \begin{equation}
        \label{eq:Q-identity}
        \norm{I -Q(u,\hat u)}_{\Linop(Y)} \leq \kappa(\rho)\quad \text{for all } u,\hat u \in \overline B_U(u^\dag,\rho)
    \end{equation}
    and for some non-negative and non-decreasing function $\kappa$ on $(0,\rho_0]$ with $\kappa_0:=\kappa(\rho_0)$.

\item\label{ass:adj_compact} The operator $G_{u^\dag}: U \to Y$ is compact. 
\end{assumption}

We will furthermore require that $\kappa(\rho)$ and $\eta(\rho)$ can be made sufficiently small by choosing $\rho$ small enough; we will make this more precise during the analysis in this section. In \cref{sec:ver-gtcc}, we will verify that this is possible for the Bouligand--Levenberg--Marquardt method applied to the non-smooth PDE \eqref{eq:maxpde-intro}.

\begin{remark}\label{rem:gtcc}
    If \eqref{eq:GTCC} is valid, then for all $\rho \in (0,\rho_0]$ and $u, \hat u \in \overline B_U(u^\dag,\rho)$, there hold
    \begin{align}
        \label{eq:GTCC2}
        \|F(\hat u) - F(u) - G_{u}(\hat u - u) \|_{Y} &\leq \frac{\eta(\rho)}{1 - \eta_0} \| G_{u}(\hat u - u) \|_{Y}
        \shortintertext{and}
        \label{eq:GTCC3}
        \|F(\hat u) - F(u) \|_{Y} &\leq \frac{1}{1 - \eta_0} \| G_{u}(\hat u - u) \|_{Y},
    \end{align}
    where the last inequality leads to the continuity of $F$ at $u$ and hence on $\overline B_U(u^\dag,\rho)$. Moreover, if \eqref{eq:Q-identity} holds, then it holds that
    \begin{equation}
        \label{eq:Q_upper_bound}
        \norm{Q(u_1,u_2)}_{\Linop(Y)} \leq \kappa_0 + 1
    \end{equation}
    for all $u_1, u_2 \in \overline B_U(u^\dag, \rho_0)$. 
    In addition, if \cref{ass:gtcc,ass:adj_compact} are satisfied, then all of \cref{ass:gtcc,ass:adj_compact} are fulfilled with $F$ and $G_u$ replaced, respectively, by $tF$ and $tG_u$ for some positive number $t$ small enough.
\end{remark}

We shall consider the \emph{modified Levenberg--Marquardt method} defined as
\begin{equation}
    \label{eq:LM-iteration}
    u_{n+1}^\delta = u_n^\delta + \left(\alpha_n I + G_{u_n^\delta}^*G_{u_n^\delta}\right)^{-1}G_{u_n^\delta}^*\left(y^\delta - F(u_n^\delta) \right), \quad n = 0,1, \ldots,
\end{equation}
where $u_0^\delta := u_0$ and $\{\alpha_n \}$ is given by
\begin{equation}
    \label{eq:Lag-para}
    \alpha_n = \alpha_0 r^n, \quad n = 0,1, \ldots
\end{equation}
for some constants $\alpha_0 >0$ and $r \in (0,1)$. We will assume that $\alpha_0$ is chosen such that
\begin{equation}
    \label{eq:scaled}
    \norm{G_{u^\dag}}_{\Linop(U,Y)} \leq \alpha_0^{1/2}.
\end{equation} 
Note that according to \cref{rem:gtcc}, this condition can be enforced for any $\alpha_0>0$ by scaling the problem \eqref{eq:inverse-pro} as well as $G_u$ accordingly.

The iteration is terminated via the \emph{discrepancy principle}
\begin{equation}
    \label{eq:disc2}
    \| y^\delta - F(u_{N_\delta}^\delta) \|_{Y} \leq \tau \delta < \| y^\delta - F(u_{n}^\delta) \|_{Y} \quad \text{for all} \ 0 \leq n < N_\delta,
\end{equation}
where $\tau >1$ is a given number.
Here $N_\delta :=N(\delta, y^\delta)$ stands for the stopping index of the iterative method.

By $\{u_n\}$, we denote the sequence of iterates defined by \eqref{eq:LM-iteration} corresponding to the noise free case $(\delta = 0)$, i.e.,
\begin{equation}
    \label{eq:LM-iteration-free}
    u_{n+1} = u_n + \left(\alpha_n I + G_{u_n}^*G_{u_n}\right)^{-1}G_{u_n}^*\left(y^\dag - F(u_n) \right), \quad n = 0,1, \ldots
\end{equation} 
For ease of exposition, from now on, we use the notations
\begin{align*}
    e_n^\delta &:= u_n^\delta - u^\dag, & \quad G_n^\delta &:= G_{u_n^\delta}, & \quad A_n^\delta &:= G_n^{\delta*}G_n^{\delta}, & \quad B_n^\delta &:= G_n^{\delta}G_n^{\delta*}, \\
    e_n &:= u_n - u^\dag,& \quad G_n &:= G_{u_n},& \quad A_n &:= G_n^*G_n, & \quad B_n &:= G_nG_n^*, \\
    & & \quad G_\dag &:= G_{u^\dag},& \quad A &:= G_{\dag}^*G_{\dag},& \quad B &:= G_{\dag}G_{\dag}^*.&&
\end{align*}

\begin{remark}
    If $F$ is smooth and if $G_u = F'(u)$, then \eqref{eq:LM-iteration} coincides with the classical Levenberg--Marquardt method; see, e.g., \cite{Kaltenbacher2008,Scherzer2011,Hanke1997,Jin2010}.
\end{remark}

\subsection{Well-posedness} \label{sec:wellposed}

We first show the well-posedness of the proposed iterative method as well as the logarithmic estimate of the stopping index $N_\delta$. 

The first lemma gives a useful tool to estimate the difference between iterates.
\begin{lemma}[{cf. \cite[Lem.~2]{Jin2010}}] \label{lem:transform}
    Assume that \eqref{eq:Q-trans} and \eqref{eq:Q-identity} are fulfilled. Let $\rho \in (0,\rho_0]$. For any $u, v \in \overline B_U(u^\dag , \rho)$, let $A_u := G_u^*G_u$ and $A_v := G_v^*G_v$. Then for any $\alpha >0$, the following identities hold
    \begin{align}
        \label{eq:trans1}
        \left(\alpha I + A_u \right)^{-1} - \left(\alpha I + A_v \right)^{-1} &= \left(\alpha I + A_v \right)^{-1}G_v^*R_\alpha(u,v)
        \shortintertext{and}
        \label{eq:trans2}
        \left(\alpha I + A_u \right)^{-1}G_u^* - \left(\alpha I + A_v \right)^{-1}G_v^* &= \left(\alpha I + A_v \right)^{-1}G_v^*S_\alpha(u,v),
    \end{align}
    where $R_\alpha(u,v): U \to Y$ and $S_\alpha(u,v): Y \to Y$ are bounded linear operators satisfying
    \begin{equation}
        \label{eq:RS-esti}
        \left \|R_\alpha(u,v) \right \|_{\Linop(U,Y)} \leq \alpha^{-1/2}\kappa(\rho) \quad \text {and} \quad \left \|S_\alpha(u,v) \right \|_{\Linop(Y)} \leq 3\kappa(\rho).
    \end{equation}
\end{lemma}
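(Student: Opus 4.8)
The plan is to establish both identities by the standard algebraic manipulation used for resolvent differences, then to insert the structural hypothesis \eqref{eq:Q-trans} at the right moment so that the bound \eqref{eq:Q-identity} can be brought to bear. The natural starting point is the resolvent identity
\begin{equation*}
    \left(\alpha I + A_u \right)^{-1} - \left(\alpha I + A_v \right)^{-1} = \left(\alpha I + A_v \right)^{-1}\left(A_v - A_u\right)\left(\alpha I + A_u \right)^{-1}.
\end{equation*}
I would then rewrite the middle factor $A_v - A_u = G_v^*G_v - G_u^*G_u$ by symmetrically inserting cross terms, e.g. $G_v^*G_v - G_v^*G_u + G_v^*G_u - G_u^*G_u = G_v^*(G_v - G_u) + (G_v^* - G_u^*)G_u$, and use \eqref{eq:Q-trans} in the form $G_u = Q(u,v)G_v$ (and its adjoint) to express the differences $G_v - G_u$ and $G_v^* - G_u^*$ through $I - Q(u,v)$ composed with $G_v$ or $G_v^*$. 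This isolates a leading factor of $G_v^*$ on the left, after which everything to its right becomes the operator that I will call $R_\alpha(u,v)$.

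For the first identity I expect to obtain an $R_\alpha(u,v)$ that is a sum of two terms, each carrying one copy of $I - Q(u,v)$ together with the resolvents and a factor $G_v$ or $G_v^*$. To get the stated bound $\norm{R_\alpha(u,v)}_{\Linop(U,Y)} \leq \alpha^{-1/2}\kappa(\rho)$ I would use the spectral-calculus estimates $\norm{(\alpha I + A_v)^{-1}} \leq \alpha^{-1}$, $\norm{(\alpha I + A_v)^{-1}G_v^*} \leq \tfrac12\alpha^{-1/2}$, and $\norm{G_v(\alpha I + A_v)^{-1}} \leq \tfrac12\alpha^{-1/2}$, which follow from functional calculus applied to $A_v$ and $B_v$ since $G_v^* = G_v^*$ intertwines the two. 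Combining these with $\norm{I - Q(u,v)}_{\Linop(Y)} \leq \kappa(\rho)$ should make the powers of $\alpha$ and the numerical constants collapse to exactly $\alpha^{-1/2}\kappa(\rho)$; the clean factor of $1$ (rather than, say, $2$) suggests the two contributions recombine, so I would keep track of the cross terms carefully rather than bounding them independently.

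For the second identity \eqref{eq:trans2} the route is analogous but one first reduces it to the first via
\begin{equation*}
    \left(\alpha I + A_u \right)^{-1}G_u^* - \left(\alpha I + A_v \right)^{-1}G_v^*
    = \bigl[\left(\alpha I + A_u \right)^{-1} - \left(\alpha I + A_v \right)^{-1}\bigr]G_u^* + \left(\alpha I + A_v \right)^{-1}\left(G_u^* - G_v^*\right).
\end{equation*}
The first bracket is already factored as $\left(\alpha I + A_v \right)^{-1}G_v^*R_\alpha(u,v)$ from \eqref{eq:trans1}, and the second term can again be peeled using $G_u^* - G_v^* = (Q(u,v)-I)^*G_v^*$ together with the resolvent, so that after extracting the common left factor $\left(\alpha I + A_v \right)^{-1}G_v^*$ the remainder defines $S_\alpha(u,v): Y \to Y$. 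I anticipate that the main obstacle is precisely the bookkeeping in this second step: the two contributions to $S_\alpha(u,v)$ must be combined and each estimated so that the total stays at $3\kappa(\rho)$ with no stray power of $\alpha$, which requires using $\norm{G_u^*} \leq \norm{G_v^*}(1 + \kappa_0)$ via \eqref{eq:Q_upper_bound} and again the $\tfrac12\alpha^{-1/2}$ resolvent bounds so that the $\alpha$-dependence cancels exactly. Handling the constant $3$ (as opposed to a larger value) will demand grouping the terms so that one bound of size $\alpha^{-1/2}\kappa(\rho)\cdot 2\alpha^{-1/2}$-type does not appear twice; I would verify the constant by writing $S_\alpha(u,v)$ explicitly before taking norms.
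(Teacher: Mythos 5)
Your overall route---the resolvent identity, the splitting $A_v - A_u = G_v^*(G_v-G_u) + (G_v^*-G_u^*)G_u$, insertion of \eqref{eq:Q-trans}, factoring out $(\alpha I + A_v)^{-1}G_v^*$, and the reduction of \eqref{eq:trans2} to \eqref{eq:trans1} plus a term in $G_u^*-G_v^*$---is exactly the paper's. However, the two estimating mechanisms you name would not deliver the stated constants. For $R_\alpha$, you propose using \eqref{eq:Q-trans} only in the form $G_u = Q(u,v)G_v$; then the first summand becomes $(I-Q(u,v))G_v(\alpha I + A_u)^{-1}$, and to bound $G_v(\alpha I + A_u)^{-1}$ you must convert $G_v = Q(v,u)G_u$ anyway, picking up an extra factor of order $1+\kappa_0$. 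The paper instead uses the transfer relation in \emph{both} directions, $G_v - G_u = (Q(v,u)-I)G_u$ and $G_v^*-G_u^* = G_v^*(I-Q(u,v)^*)$, so that
\begin{equation*}
    R_\alpha(u,v) = \left((Q(v,u)-I) + (I - Q(u,v)^*)\right) G_u\left(\alpha I + A_u \right)^{-1},
\end{equation*}
and the recombination you anticipated is then just $2\kappa(\rho)\cdot\frac12\alpha^{-1/2} = \alpha^{-1/2}\kappa(\rho)$, using $\norm{G_u(\alpha I + A_u)^{-1}}_{\Linop(U,Y)} \leq \frac12\alpha^{-1/2}$ from \cref{lem:spectral-half}.

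The genuine gap is in your bound for $S_\alpha$. With $S_\alpha(u,v) = Q(u,v)^* - I + R_\alpha(u,v)G_u^*$, estimating the product term by $\norm{R_\alpha}\,\norm{G_u^*}$ together with $\norm{G_u^*}\leq(1+\kappa_0)\norm{G_v^*}$ gives $\alpha^{-1/2}\kappa(\rho)(1+\kappa_0)\norm{G_v}$, which blows up as $\alpha\to 0$: no cancellation of the $\alpha$-dependence is available from norm bounds of the individual factors, since $\norm{G_u^*}$ is $\alpha$-independent while the lemma must hold for all $\alpha>0$. The missing ingredient is to keep $R_\alpha(u,v)G_u^*$ intact and use the intertwining $G_u(\alpha I + A_u)^{-1} = (\alpha I + B_u)^{-1}G_u$ with $B_u := G_uG_u^*$, which yields
\begin{equation*}
    R_\alpha(u,v)G_u^* = \left((Q(v,u)-I) + (I - Q(u,v)^*)\right)\left(\alpha I + B_u\right)^{-1}B_u,
\end{equation*}
and then the functional-calculus bound $\norm{(\alpha I + B_u)^{-1}B_u}_{\Linop(Y)}\leq 1$, uniform in $\alpha$ (\cref{lem:spectral-nu} with $\nu=1$, $m=l$), gives $\norm{S_\alpha(u,v)}_{\Linop(Y)} \leq \kappa(\rho) + 2\kappa(\rho) = 3\kappa(\rho)$. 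This identity---not any bound on $\norm{G_u^*}$---is what makes the constant $3$ and the $\alpha$-uniformity come out, so without it your plan for \eqref{eq:RS-esti} fails at precisely the step you flagged as the main obstacle.
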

\begin{proof}
    The proof is analogous to that in \cite{Jin2010}. We see from \eqref{eq:Q-trans} that
    \begin{multline*}
        \left(\alpha I + A_u \right)^{-1} - \left(\alpha I + A_v \right)^{-1}\\
        \begin{aligned}
            & = \left(\alpha I + A_v \right)^{-1}(A_v -A_u) \left(\alpha I + A_u \right)^{-1}\\
            & = \left(\alpha I + A_v \right)^{-1}\left(G_v^*(G_v-G_u) + (G_v^* - G_u^*)G_u \right) \left(\alpha I + A_u \right)^{-1}\\
            & = \left(\alpha I + A_v \right)^{-1}\left(G_v^*(Q(v,u)-I)G_u + G_v^*(I - Q(u,v)^*)G_u \right) \left(\alpha I + A_u \right)^{-1}\\
            & = \left(\alpha I + A_v \right)^{-1}G_v^*\left((Q(v,u)-I) + (I - Q(u,v)^*) \right) G_u\left(\alpha I + A_u \right)^{-1}\\
            & = \left(\alpha I + A_v \right)^{-1}G_v^*R_\alpha(u,v)
        \end{aligned}
    \end{multline*}
    with
    \begin{equation*}
        R_\alpha(u,v) := \left((Q(v,u)-I) + (I - Q(u,v)^*) \right) G_u\left(\alpha I + A_u \right)^{-1}.
    \end{equation*}
    Easily, \eqref{eq:Q-identity} and \cref{lem:spectral-half} ensure that $R_\alpha(u,v)$ satisfies the first inequality in \eqref{eq:RS-esti}. On the other hand, using \eqref{eq:Q-trans} and the above representation yields 
    \begin{equation*}
        \begin{aligned}
            \left(\alpha I + A_u \right)^{-1}G_u^* - \left(\alpha I + A_v \right)^{-1}G_v^*
            & = \left(\alpha I + A_v \right)^{-1}(G_u^* - G_v^*) + \left[ \left(\alpha I + A_u \right)^{-1} - \left(\alpha I + A_v \right)^{-1}\right] G_u^*\\
            & = \left(\alpha I + A_v \right)^{-1} G_v^*\left(Q(u,v)^* - I \right) + \left(\alpha I + A_v \right)^{-1}G_v^*R_\alpha(u,v)G_u^* \\
            & = \left(\alpha I + A_v \right)^{-1}G_v^*S_\alpha(u,v)
        \end{aligned}
    \end{equation*}
    with 
    \begin{equation*}
        \begin{aligned}
            S_\alpha(u,v) & := Q(u,v)^* - I + R_\alpha(u,v)G_u^* \\
            & = Q(u,v)^* - I + \left((Q(v,u)-I) + (I - Q(u,v)^*) \right) \left(\alpha I + B_u \right)^{-1}B_u
        \end{aligned}
    \end{equation*}
    and $B_u := G_uG_u^*$. From the definition of $S_\alpha$, \eqref{eq:Q-identity} and \cref{lem:spectral-nu} lead to the second inequality in \eqref{eq:RS-esti}.
\end{proof}

To simplify the notation in the following proofs, we introduce the constants
\begin{equation}
    \label{eq:constants-c}
    c_0 := \frac{1}{\sqrt{r}}, \quad c_1 := \frac{1}{1-\sqrt{r}}, \quad c_2 := \frac{1}{\sqrt{r}(1- \sqrt{r})}, \quad c_3 := \frac{\sqrt{r}}{1-r}, \quad c_4 := \frac{1}{1-r},
\end{equation}
as well as
\begin{equation}
    \label{eq:constants-K} 
    K_0(r, \nu) := \frac{1}{\sqrt{r} \left(r^{\nu - 1/2} -1 \right)}, \quad K_1(r, \nu) := \frac{1}{r \left(r^{\nu - 1/2} -1 \right)}
\end{equation}
for $0 \leq \nu < \frac{1}{2}$.

Let now $\tilde{N}_\delta\in \N$ be such that
\begin{equation}
    \label{eq:stop-index2}
    \alpha_{\tilde{N}_\delta } \leq \left(\frac{\delta}{\gamma_0 \norm{e_0}_{U} }\right)^2 < \alpha_n \quad \text {for all } 0 \leq n < \tilde{N}_\delta,
\end{equation}
for a constant 
\begin{equation*}
    \gamma_0 > \frac{2c_0}{(1-\eta_0)(\tau - \tau_0)}
\end{equation*}
with $\tau > \tau_0 > 1$. We can now prove a logarithmic estimate for $\tilde N_\delta$, which will later be used to obtain the corresponding estimate for the actual stopping index $N_\delta$.
\begin{lemma} \label{lem:loga-esti}
    Let $\tilde{N}_\delta$ be defined by \eqref{eq:stop-index2}. Then there holds
    \begin{equation*}
        \tilde{N}_\delta = O(1 + |\log(\delta)|).
    \end{equation*}
\end{lemma}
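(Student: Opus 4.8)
The plan is to unwind the defining inequality \eqref{eq:stop-index2} for $\tilde N_\delta$ into an explicit bound and observe that the geometric decay of the $\alpha_n$ forces $\tilde N_\delta$ to grow at most logarithmically in $1/\delta$. First I would focus on the crucial inequality $\alpha_{\tilde N_\delta} \leq \left(\frac{\delta}{\gamma_0 \norm{e_0}_U}\right)^2$. Substituting the geometric choice $\alpha_n = \alpha_0 r^n$ from \eqref{eq:Lag-para}, this reads
\begin{equation*}
    \alpha_0 r^{\tilde N_\delta} \leq \left(\frac{\delta}{\gamma_0 \norm{e_0}_U}\right)^2.
\end{equation*}

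Taking logarithms and using $0 < r < 1$ (so that $\log r < 0$), I would solve for $\tilde N_\delta$ to obtain
\begin{equation*}
    \tilde N_\delta \leq \frac{1}{|\log r|}\left( \log \alpha_0 + 2\log\!\left(\gamma_0 \norm{e_0}_U\right) - 2\log\delta \right).
\end{equation*}
Since $\alpha_0$, $r$, $\gamma_0$, and $\norm{e_0}_U$ are fixed constants independent of $\delta$, everything except the $-2\log\delta = 2|\log\delta|$ term (for $\delta$ small, where $\log\delta < 0$) is a constant $C$. Hence $\tilde N_\delta \leq C + \frac{2}{|\log r|}|\log\delta| = O(1 + |\log\delta|)$, which is exactly the claimed estimate. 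One small point of care: I must make sure $\tilde N_\delta$ is well defined, i.e.\ that such an integer exists; this follows because $\alpha_n = \alpha_0 r^n \to 0$ as $n\to\infty$, so the defining threshold is eventually crossed, and one simply takes the first index at which it is. I would also note that if $\delta$ is not small the estimate is trivially absorbed into the $O(1)$ term.

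The verification here is essentially routine, so I do not anticipate a genuine obstacle; the only thing requiring slight attention is bookkeeping the signs when taking logarithms of quantities less than one (both $r$ and, for small $\delta$, $\delta$ itself), so that the inequality direction and the absolute values are handled correctly. The argument is purely arithmetic and does not invoke \cref{ass:gtcc} or \cref{ass:adj_compact}, nor the transformation lemma \cref{lem:transform}; it depends only on the geometric structure of \eqref{eq:Lag-para} and the definition \eqref{eq:stop-index2}. This lemma will then serve, as the text announces, to transfer the logarithmic bound to the actual stopping index $N_\delta$ once the discrepancy principle \eqref{eq:disc2} is shown to trigger no later than $\tilde N_\delta$.
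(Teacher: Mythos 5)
There is a genuine gap, and it sits exactly at the sign bookkeeping you flagged as the only delicate point: you used the wrong half of the double inequality \eqref{eq:stop-index2}, and when you divide by $\log r<0$ the inequality \emph{reverses}. From $\alpha_0 r^{\tilde N_\delta} \leq \left(\delta/(\gamma_0\norm{e_0}_U)\right)^2$, taking logarithms gives $\tilde N_\delta \log r \leq 2\log\delta - 2\log(\gamma_0\norm{e_0}_U) - \log\alpha_0$, and dividing by the negative number $\log r$ yields
\begin{equation*}
    \tilde N_\delta \;\geq\; \frac{1}{|\log r|}\left(\log\alpha_0 + 2\log\left(\gamma_0\norm{e_0}_U\right) - 2\log\delta\right),
\end{equation*}
a \emph{lower} bound, not the upper bound you wrote. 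This is not a repairable slip within your chosen route: the inequality $\alpha_{\tilde N_\delta} \leq (\delta/(\gamma_0\norm{e_0}_U))^2$ only records that the threshold has been crossed by step $\tilde N_\delta$, which is logically consistent with $\tilde N_\delta$ being arbitrarily large, so no upper bound on $\tilde N_\delta$ can follow from it alone.

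The upper bound must instead come from the minimality encoded in the \emph{second} half of \eqref{eq:stop-index2}, namely $(\delta/(\gamma_0\norm{e_0}_U))^2 < \alpha_n$ for all $0 \leq n < \tilde N_\delta$. This is what the paper does: taking $n = \tilde N_\delta - 1$ (the case $\tilde N_\delta = 0$ being trivial) gives $(\delta/(\gamma_0\norm{e_0}_U))^2 < \alpha_0 r^{\tilde N_\delta - 1}$, whence, since $\log_r$ is decreasing for $0<r<1$,
\begin{equation*}
    \tilde N_\delta < 1 + 2\log_r\delta - 2\log_r\left(\gamma_0\norm{e_0}_U\right) - \log_r\alpha_0 = O(1+|\log\delta|).
\end{equation*}
Since $\tilde N_\delta$ is the first crossing index, the two halves of \eqref{eq:stop-index2} sandwich it within an interval of length one, so the numerical bound you stated is in fact true --- but your derivation proves the opposite inequality. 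Your remaining remarks (well-definedness of $\tilde N_\delta$ because $\alpha_n \to 0$, and the independence of the argument from \cref{ass:gtcc,ass:adj_compact} and \cref{lem:transform}) are correct and consistent with the paper.
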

\begin{proof}
    From \eqref{eq:stop-index2} and \eqref{eq:Lag-para}, we conclude that
    \begin{equation*}
        \left(\frac{\delta}{\gamma_0 \norm{e_0}_{U} }\right)^2 < \alpha_0 r^{\tilde{N}_\delta - 1},
    \end{equation*}
    which, together with the fact that $0 < r <1$, directly gives
    \begin{equation*}
        \tilde{N}_\delta < 1 + 2 \log_r \delta - 2 \log_r \left(\gamma_0 \| e_0 \|_U \right) - \log_r \alpha_0
    \end{equation*}
    and hence the desired estimate.
\end{proof}

We now show a uniform bound on the iterates and the error by, if necessary, further restricting the radius $\rho$ of the neighborhood of $u^\dag$.
\begin{lemma}[\protect{cf.~\cite[Lem.~4]{Jin2010}}] \label{lem:well-posedness}
    Let $\{\alpha_n\}$ be defined by \eqref{eq:Lag-para} and \eqref{eq:scaled}. Assume that \cref{ass:gtcc} holds. 
    Assume further that there exists a positive constant $\rho_1 \leq \rho_0$ such that
    \begin{equation} \label{eq:rho1_cond}
        \left\{
            \begin{aligned}
                2\kappa(\rho_1) + \frac{c_0 }{1 - \eta_0}(1 + 3 \kappa_0)\eta(\rho_1) &\leq \frac{c_0}{2c_2}  \\
                (c_1 +3)\kappa(\rho_1) + \frac{2c_2 }{1 - \eta_0}(1 + 3 \kappa_0)\eta(\rho_1) &\leq 1.
            \end{aligned}
        \right.
    \end{equation}
    Let $\rho \in (0,\rho_1]$ be arbitrary and let $u_0 \in U$ be such that $(2+ c_1\gamma_0) \norm{e_0}_U < \rho$. 
    Then there hold
    \begin{enumerate}[label=(\roman*)]
        \item\label{it:well-posed:1}  $u_n^\delta \in \overline B_U(u^\dag,\rho)$;
        \item \label{it:well-posed:2} $\norm{e_n^\delta}_U \leq \left(2+c_1\gamma_0\right) \norm{e_0}_U < \rho$;
        \item \label{it:well-posed:3} $\norm{G_\dag e_n^\delta}_{Y} \leq \left(c_0+2c_2\gamma_0\right) \norm{e_0}_U \alpha_n^{1/2}$
    \end{enumerate} 
    for all $0 \leq n \leq \tilde{N}_\delta$, where the constants $c_i$, $i=0,1,2$, are given by \eqref{eq:constants-c}.
\end{lemma}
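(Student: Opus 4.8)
The three assertions are strongly coupled, so the plan is to prove (i)--(iii) \emph{simultaneously} by induction on $n$: the projected-error bound (iii) will feed the residual estimate needed for (ii), while (ii) keeps $\norm{e_{n+1}^\delta}_U < \rho$ and hence yields both (i) and the fact that $u_{n+1}^\delta \in \overline B_U(u^\dag,\rho)$, which is exactly what is required for \eqref{eq:GTCC}, the factorization \eqref{eq:Q-trans}, and \cref{lem:transform} to remain applicable at the next index. The base case $n=0$ is immediate: (ii) holds since $e_0^\delta = e_0$ and $c_1\gamma_0>0$, and (iii) follows from \eqref{eq:scaled} because $\norm{G_\dag e_0}_Y \le \norm{G_\dag}_{\Linop(U,Y)}\norm{e_0}_U \le \alpha_0^{1/2}\norm{e_0}_U$ together with $c_0 + 2c_2\gamma_0 \ge 1$.

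For the inductive step I would first use \eqref{eq:GTCC} with $\hat u = u^\dag$ and $u = u_n^\delta$ to write the residual as $y^\delta - F(u_n^\delta) = (y^\delta - y^\dag) - G_n^\delta e_n^\delta + w_n$ with $\norm{w_n}_Y \le \eta(\rho)\norm{F(u_n^\delta) - y^\dag}_Y$. Inserting this into \eqref{eq:LM-iteration} and using $(\alpha_n I + A_n^\delta)^{-1}A_n^\delta = I - \alpha_n(\alpha_n I + A_n^\delta)^{-1}$ yields the error recursion
\[
    e_{n+1}^\delta = \alpha_n(\alpha_n I + A_n^\delta)^{-1}e_n^\delta + (\alpha_n I + A_n^\delta)^{-1}G_n^{\delta*}\bigl[(y^\delta - y^\dag) + w_n\bigr].
\]
Every subsequent estimate is obtained by transporting the operators built from $G_n^\delta$ back to the fixed compact operator $G_\dag$ via \cref{lem:transform}, each such replacement costing a factor of order $\kappa(\rho)$ through \eqref{eq:RS-esti}, and then invoking the spectral estimates of \cref{lem:spectral-half,lem:spectral-nu}; the remainder $w_n$ is controlled by \eqref{eq:GTCC3}, and $\norm{G_n^\delta e_n^\delta}_Y$ is traded for $\norm{G_\dag e_n^\delta}_Y$ using \eqref{eq:Q-trans} and \eqref{eq:Q_upper_bound}.

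To reproduce (iii) I would apply $G_\dag$ to the recursion; after transporting with \eqref{eq:trans1}, the leading term is $\alpha_n(\alpha_n I + B)^{-1}G_\dag e_n^\delta = \alpha_n G_\dag(\alpha_n I + A)^{-1}e_n^\delta$, and the spectral bound $\norm{\alpha_n G_\dag(\alpha_n I + A)^{-1}}_{\Linop(U,Y)} \le \frac{1}{2}\alpha_n^{1/2}$ estimates it by $\frac{1}{2}\alpha_n^{1/2}\norm{e_n^\delta}_U$. Feeding in the inductive bound (ii) and writing $\alpha_n^{1/2} = c_0\alpha_{n+1}^{1/2}$ converts this into $\bigl(c_0 + \frac{1}{2}c_2\gamma_0\bigr)\alpha_{n+1}^{1/2}\norm{e_0}_U$ (using $c_0c_1 = c_2$), which stays strictly below the target and leaves room for the noise and remainder terms whose coefficients are made small by \eqref{eq:rho1_cond}. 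For (ii) I would instead unfold the recursion completely, so that each driving term appears premultiplied by a product of damping factors $\alpha_j(\alpha_j I + A_j^\delta)^{-1}$; transporting these to the reference operator and using the extra decay supplied by the products, the accumulated noise becomes a series that, via the defining inequality $\delta < \gamma_0\norm{e_0}_U\alpha_k^{1/2}$ of $\tilde N_\delta$ in \eqref{eq:stop-index2}, sums to the factor $c_1\gamma_0$, while the $w_k$- and $\kappa(\rho)$-contributions are kept under control by the inductive bound (iii) and the smallness conditions \eqref{eq:rho1_cond}.

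The main obstacle is precisely this bookkeeping, and it is a genuine one. Because $u \mapsto G_u$ is not continuous, $G_n^\delta$ can only be compared with the fixed operator $G_\dag$ through \eqref{eq:Q-trans} and \cref{lem:transform}, so every estimate carries multiplicative corrections of order $\kappa(\rho)$, while the tangential cone condition contributes corrections of order $\eta(\rho)$; these appear in both the (ii)- and (iii)-recursions and couple them through $\norm{G_\dag e_n^\delta}_Y$. In particular, a naive term-by-term bound of the remainders $w_k$ would \emph{grow} with $n$, and it is only after unfolding the recursion, transporting via \cref{lem:transform}, and exploiting the damping products that these contributions become summable. The two inequalities in \eqref{eq:rho1_cond}—note the appearance of $c_1$, $c_2$, $\kappa_0$, and $\eta_0$ on their left-hand sides—are exactly the algebraic conditions ensuring that, once all $\kappa(\rho)$- and $\eta(\rho)$-terms have been collected, the reproduced constants do not exceed $2 + c_1\gamma_0$ and $c_0 + 2c_2\gamma_0$; verifying this matching is where the delicate work lies, whereas (i) then follows for free from $\norm{e_{n+1}^\delta}_U < \rho \le \rho_1 \le \rho_0$.
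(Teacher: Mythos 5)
Your overall architecture coincides with the paper's: the same simultaneous induction on (ii) and (iii), the same error recursion $e_{n+1}^\delta = \alpha_n(\alpha_n I + A_n^\delta)^{-1}e_n^\delta + (\alpha_n I + A_n^\delta)^{-1}G_n^{\delta*}\bigl[(y^\delta-y^\dag)-z_n^\delta\bigr]$, the same transport to the fixed operator via \cref{lem:transform}, the same bound $\norm{z_n^\delta}_Y \le \frac{\eta(\rho)}{1-\eta_0}\norm{G_\dag e_n^\delta}_Y$, and the same use of $\delta \le \gamma_0\norm{e_0}_U\alpha_m^{1/2}$ from \eqref{eq:stop-index2}; your telescoped treatment of (ii) is exactly the paper's. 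The genuine gap is in (iii), where you propose a \emph{one-step} contraction instead of telescoping, and that step does not close under \eqref{eq:rho1_cond}. Your leading-term bound $\norm{\alpha_n(\alpha_n I+B)^{-1}G_\dag e_n^\delta}_Y \le \frac12\alpha_n^{1/2}\norm{e_n^\delta}_U \le \bigl(c_0+\frac12 c_2\gamma_0\bigr)\alpha_{n+1}^{1/2}\norm{e_0}_U$ is indeed the best available from the hypotheses, but it already consumes $\frac12 c_2\gamma_0$ of the target $c_0+2c_2\gamma_0$, leaving a budget of $\frac32 c_2\gamma_0$ for everything else. The fresh noise contributes at least $c_0\gamma_0(1+3\kappa(\rho))\norm{e_0}_U\alpha_{n+1}^{1/2}$ (the factor $(\alpha_n I+B)^{-1}B$ has norm one, so nothing dampens $\delta$ in a single step), and the $z_n^\delta$-term contributes $c_0(1+3\kappa(\rho))\frac{\eta(\rho)}{1-\eta_0}(c_0+2c_2\gamma_0)\norm{e_0}_U\alpha_{n+1}^{1/2}$. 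Taking $\kappa = 0$ and $\eta$ at the boundary permitted by the first line of \eqref{eq:rho1_cond}, i.e.\ $\frac{\eta}{1-\eta_0}=\frac{1}{2c_2}$, the $\gamma_0$-part of these two terms equals $2c_0\gamma_0$, so closing requires $\frac12 c_2 + 2c_0 \le 2c_2$, that is $2c_0 \le \frac32 c_2$, which holds only for $r \ge 1/16$; moreover there remains an uncovered constant $\frac{c_0^2}{2c_2}\gamma_0^{-1}$-type overflow that would need an extra lower bound on $\gamma_0$ not assumed anywhere. So your claim that the one-step leading term ``leaves room for the noise and remainder terms whose coefficients are made small by \eqref{eq:rho1_cond}'' is false in general: those coefficients are \emph{not} small — only the $\kappa,\eta$-factors are.

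The paper escapes this by telescoping (iii) as well: from $G_\dag e_{l+1}^\delta = \prod_{m=0}^{l}\alpha_m(\alpha_m I+B)^{-1}G_\dag e_0 + \sum_{m=0}^{l}\alpha_m^{-1}\prod_{j=m}^{l}\alpha_j(\alpha_j I+B)^{-1}B\,w_m^\delta$, the leading term costs only $\frac12\bigl(\sum_{j=0}^{l}\alpha_j^{-1}\bigr)^{-1/2}\norm{e_0}_U \le \frac12 c_0\alpha_{l+1}^{1/2}\norm{e_0}_U$ by \cref{lem:spectral-half,lem:sum-esti}, so the \emph{entire} budget $2c_2\gamma_0$ is available for the accumulated $w$-sums, which are controlled by $\sum_{m=0}^{l}\alpha_m^{-1/2}\bigl(\sum_{j=m}^{l}\alpha_j^{-1}\bigr)^{-1}\le c_2\alpha_{l+1}^{1/2}$; the second line of \eqref{eq:rho1_cond} is calibrated precisely so that $c_2\gamma_0(1+H_2(\rho))\le 2c_2\gamma_0$. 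You should redo (iii) this way. A secondary caution on (ii): your phrasing suggests unfolding the recursion with the varying resolvents $\alpha_j(\alpha_j I+A_j^\delta)^{-1}$ and transporting the resulting products afterwards; \cref{lem:transform} swaps a \emph{single} resolvent at cost $O(\kappa(\rho))$, and swapping up to $l$ factors in a product would accrue an error of order $l\kappa(\rho)$, which is unbounded in $l$. The correct order (and the paper's) is to transport the one-step recursion first, obtaining $e_{m+1}^\delta = \alpha_m(\alpha_m I+A)^{-1}e_m^\delta + (\alpha_m I+A)^{-1}G_\dag^* w_m^\delta$ with all transport errors absorbed into $w_m^\delta$, and only then telescope, so that every product involves the fixed operators $A$ and $B$.
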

\begin{proof}
    It is sufficient to show \ref{it:well-posed:2} and \ref{it:well-posed:3} by induction on $n$ with $0 \leq n \leq \tilde{N}_\delta$. Obviously, \ref{it:well-posed:2} and \ref{it:well-posed:3} are fulfilled with $n=0$. Now for any fixed $0 \leq l < \tilde{N}_\delta$, we assume that \ref{it:well-posed:2} and \ref{it:well-posed:3} hold true for all $0 \leq n \leq l$. We shall prove these assertions for $n = l+1$. To this end, we set for any $0 \leq m \leq l$
    \begin{equation}
        \label{eq:z-delta}
        z_m^\delta := F(u_m^\delta) - y^\dag - G_m^\delta e_m^\delta.
    \end{equation} 
    Moreover, we see from \eqref{eq:LM-iteration} and the identity $I = \alpha (\alpha I + T)^{-1} + (\alpha I + T)^{-1}T$ that
    \begin{equation*}
        \begin{aligned}
            e_{m+1}^\delta & = u_{m+1}^\delta - u^\dag \\
            & = e_m^\delta + \left(\alpha_n I + A_m^\delta \right)^{-1} G_m^{\delta*}\left(y^\delta - F(u_m^\delta) \right) \\
            & = \alpha_m \left(\alpha_m I + A_m^\delta \right)^{-1} e_m^\delta + \left(\alpha_m I + A_m^\delta \right)^{-1} A_m^\delta e_m^\delta + \left(\alpha_m I + A_m^\delta \right)^{-1} G_m^{\delta*}\left(y^\delta - F(u_m^\delta) \right) \\
            & = \alpha_m \left(\alpha_m I + A_m^\delta \right)^{-1} e_m^\delta + \left(\alpha_m I + A_m^\delta \right)^{-1} G_m^{\delta*}\left(y^\delta - F(u_m^\delta) + G_m^\delta e_m^\delta \right) \\
            & = \alpha_m \left(\alpha_m I + A_m^\delta \right)^{-1} e_m^\delta + \left(\alpha_m I + A_m^\delta \right)^{-1} G_m^{\delta*}\left(y^\delta - y^\dag -z_m^\delta \right),
        \end{aligned}
    \end{equation*}
    which together with \cref{lem:transform} gives
    \begin{equation*}
        \begin{aligned}
            e_{m+1}^\delta & = \alpha_m \left(\alpha_m I + A \right)^{-1} \left[ I + G_\dag^* R_{\alpha_m}(u_m^\delta, u^\dag) \right] e_m^\delta \\
            \MoveEqLeft[-1] + \left(\alpha_m I + A \right)^{-1} G_\dag^*\left[ I + S_{\alpha_m}(u_m^\delta, u^\dag) \right]\left(y^\delta - y^\dag -z_m^\delta \right).
        \end{aligned}
    \end{equation*}
    Consequently, it holds that
    \begin{equation}
        \label{eq:error-repr}
        e_{m+1}^\delta = \alpha_m \left(\alpha_m I + A \right)^{-1} e_m^\delta + \left(\alpha_m I + A \right)^{-1} G_\dag^* w_m^\delta 
    \end{equation}
    with 
    \begin{equation}
        \label{eq:w-define}
        w_m^\delta := \alpha_m R_{\alpha_m}(u_m^\delta, u^\dag) e_m^\delta + \left[ I + S_{\alpha_m}(u_m^\delta, u^\dag)\right] \left(y^\delta - y^\dag -z_m^\delta \right).
    \end{equation}
    The definition of $w_m^\delta$ and the estimates \eqref{eq:RS-esti} imply that
    \begin{equation*}
        \norm{w_m^\delta }_Y \leq \kappa(\rho) \alpha_m^{1/2} \norm{e_m^\delta}_U + \left(1 + 3\kappa(\rho)\right)\left(\delta + \norm{z_m^\delta }_Y\right). 
    \end{equation*}
    Furthermore, \eqref{eq:GTCC} and \eqref{eq:GTCC3} give 
    \begin{equation}
        \label{eq:z-esti}
        \norm{z_m^\delta }_Y \leq \frac{\eta(\rho)}{1 -\eta_0} \norm{G_\dag e_m^\delta}_Y.
    \end{equation}
    We thus have
    \begin{equation}
        \label{eq:w-esti}
        \norm{w_m^\delta }_Y \leq \kappa(\rho) \alpha_m^{1/2} \norm{e_m^\delta}_U + \left(1 + 3\kappa(\rho)\right)\left(\delta + \frac{\eta(\rho)}{1 -\eta_0 } \norm{G_\dag e_m^\delta}_Y\right). 
    \end{equation}
    By telescoping \eqref{eq:error-repr}, we obtain
    \begin{equation}
        \label{eq:error-plus1}
        e_{l+1}^\delta = \prod_{m=0}^{l} \alpha_m \left(\alpha_m I + A \right)^{-1} e_0 + \sum_{m=0}^{l} \alpha_m^{-1} \prod_{j=m}^{l} \alpha_j \left(\alpha_j I + A \right)^{-1} G_\dag^*w_m^\delta 
    \end{equation}
    and thus
    \begin{equation}
        \label{eq:resi-plus1}
        G_\dag e_{l+1}^\delta = \prod_{m=0}^{l} \alpha_m \left(\alpha_m I + B \right)^{-1} G_\dag e_0 + \sum_{m=0}^{l} \alpha_m^{-1} \prod_{j=m}^{l} \alpha_j \left(\alpha_j I + B\right)^{-1} B w_m^\delta,
    \end{equation}
    where we have used that identity $G_\dag \left(\alpha I + A \right)^{-1} = \left(\alpha I + B \right)^{-1} G_\dag$.
    Applying \cref{lem:spectral-nu,lem:spectral-half} to \eqref{eq:error-plus1} and using \eqref{eq:w-esti} yields
    \begin{equation*}
        \begin{aligned}
            \norm{e_{l+1}^\delta}_U & \leq \norm{e_0}_U + \frac{1}{2} \sum_{m=0}^{l} \alpha_m^{-1} \left(\sum_{j=m}^l \alpha_j^{-1} \right)^{-1/2} \norm{w_m^\delta }_Y \\ 
            & \leq \norm{e_0}_U + \frac{1}{2} \sum_{m=0}^{l} \alpha_m^{-1} \left(\sum_{j=m}^l \alpha_j^{-1} \right)^{-1/2} \left[\kappa(\rho) \alpha_m^{1/2} \norm{e_m^\delta}_U + \left(1 + 3\kappa(\rho)\right)\left(\delta + \frac{\eta(\rho)}{1 -\eta_0 } \norm{G_\dag e_m^\delta}_Y\right) \right]. 
        \end{aligned}
    \end{equation*}
    We now use the induction hypothesis to deduce that
    \begin{equation*}
        \begin{aligned}
            \norm{e_{l+1}^\delta}_U & \leq \norm{e_0}_U + \frac{1}{2} \sum_{m=0}^{l} \alpha_m^{-1} \left(\sum_{j=m}^l \alpha_j^{-1} \right)^{-1/2} \kappa(\rho) \alpha_m^{1/2} \left(2 + c_1\gamma_0 \right) \norm{e_0}_U \\
            \MoveEqLeft[-1] + \frac{1}{2} \sum_{m=0}^{l} \alpha_m^{-1} \left(\sum_{j=m}^l \alpha_j^{-1} \right)^{-1/2} \left[ \left(1 + 3\kappa(\rho)\right)\left(\delta + \frac{\eta(\rho)}{1 -\eta_0 } \left(c_0 + 2c_2\gamma_0 \right) \norm{e_0}_U \alpha_m^{1/2} \right) \right].
        \end{aligned}
    \end{equation*}
    From the choice of $\tilde{N}_\delta$, there holds that $\delta \leq \gamma_0 \norm{e_0}_U \alpha_m^{1/2}$ for all $0 \leq m \leq l < \tilde{N}_\delta$. The above estimates and \cref{lem:sum-esti} imply that
    \begin{equation*}
        \begin{aligned}
            \norm{e_{l+1}^\delta}_U & \leq \norm{e_0}_U + \frac{1}{2} \sum_{m=0}^{l} \alpha_m^{-1/2} \left(\sum_{j=m}^l \alpha_j^{-1} \right)^{-1/2} \kappa(\rho) \left(2 + c_1\gamma_0 \right) \norm{e_0}_U \\
            \MoveEqLeft[-1] + \frac{1}{2} \sum_{m=0}^{l} \alpha_m^{-1/2} \left(\sum_{j=m}^l \alpha_j^{-1} \right)^{-1/2} \left[ \left(1 + 3\kappa(\rho)\right)\left(\gamma_0 + \frac{\eta(\rho)}{1 -\eta_0 } \left(c_0 + 2c_2\gamma_0 \right) \right) \right] \norm{e_0}_U \\
            & \leq \norm{e_0}_U + \frac{1}{2} c_1 \norm{e_0}_U \left[ \kappa(\rho) \left(2 + c_1\gamma_0 \right) + \left(1 + 3\kappa(\rho)\right)\left(\gamma_0 + \frac{\eta(\rho)}{1 -\eta_0 } \left(c_0 + 2c_2\gamma_0 \right) \right) \right] \\
            & = \norm{e_0}_U \left[ 1 + \frac{c_1}{2} H_1(\rho) + \frac{c_1}{2} \gamma_0(1 + H_2(\rho)) \right]
        \end{aligned}
    \end{equation*}
    with
    \begin{equation*}
        H_1(\rho) := 2\kappa(\rho) + (1+3\kappa(\rho))\frac{c_0\eta(\rho)}{1-\eta_0}  , \quad H_2(\rho) := (c_1+3)\kappa(\rho) +\frac{2c_2\eta(\rho)}{1-\eta_0}(1 + 3\kappa(\rho)).
    \end{equation*}
    Combining this with the monotonic growth of $\kappa,\eta$ on $(0,\rho_0]$, the fact that $c_0/(2c_2) \leq 2/c_1$, and \eqref{eq:rho1_cond} yields
    \begin{equation}
        \label{eq:error-esti}
        \norm{e_{l+1}^\delta}_U \leq \norm{e_0}_U \left(2 + c_1\gamma_0 \right).
    \end{equation}
    On the other hand, \eqref{eq:resi-plus1} along with \cref{lem:spectral-half,lem:spectral-nu} gives
    \begin{equation*}
        \begin{aligned}
            \norm{G_\dag e_{l+1}^\delta}_Y & \leq \frac{1}{2} \left(\sum_{j=0}^{l} \alpha_j^{-1} \right)^{-1/2} \norm{e_0}_U + \sum_{m=0}^{l} \alpha_m^{-1} \left(\sum_{j=m}^l \alpha_j^{-1} \right)^{-1} \norm{w_m^\delta }_Y.
        \end{aligned}
    \end{equation*}
    From this, \cref{lem:sum-esti}, and \eqref{eq:w-esti}, the induction hypothesis and the choice of $\tilde{N}_\delta$ satisfying \eqref{eq:stop-index2} lead to
    \begin{equation*}
        \begin{aligned}
            \norm{G_\dag e_{l+1}^\delta}_Y 
            & \leq \frac{1}{2} c_0 \alpha_{l+1}^{1/2} \norm{e_0}_U + \norm{e_0}_U \sum_{m=0}^{l} \alpha_m^{-1/2} \left(\sum_{j=m}^l \alpha_j^{-1} \right)^{-1} \\
            \MoveEqLeft[-1]\cdot\left[ \kappa(\rho)(2+c_1\gamma_0) + \left(1 + 3\kappa(\rho)\right)\left(\gamma_0 + \frac{\eta(\rho)}{1 -\eta_0 } \left(c_0 + 2c_2\gamma_0 \right) \right) \right]\\
            & \leq \norm{e_0}_U \alpha_{l+1}^{1/2} \left\{ \frac{1}{2} c_0 + c_2 \left[ \kappa(\rho)(2+c_1\gamma_0) + \left(1 + 3\kappa(\rho)\right)\left(\gamma_0 + \frac{\eta(\rho)}{1 -\eta_0 } \left(c_0 + 2c_2\gamma_0 \right) \right) \right] \right\} \\
            & \leq \norm{e_0}_U \alpha_{l+1}^{1/2} \left[ \frac{1}{2} c_0 + c_2 H_1(\rho) + c_2\gamma_0 \left(1 + H_2(\rho) \right) \right].
        \end{aligned}
    \end{equation*}
    From this, the monotonic growth of $\kappa, \eta$ on $(0, \rho_0]$ and \eqref{eq:rho1_cond}, we have that
    \begin{equation*}
        \norm{G_\dag e_{l+1}^\delta}_Y \leq \left(c_0 + 2c_2 \gamma_0 \right)\norm{e_0}_U \alpha_{l+1}^{1/2},
    \end{equation*}
    which together with \eqref{eq:error-esti} implies that \ref{it:well-posed:2} and \ref{it:well-posed:3} are fulfilled with $n = l+1$. 
\end{proof}
By using a similar argument for $\{u_n\}$ defined by \eqref{eq:LM-iteration-free}, we obtain the following result.
\begin{lemma}[\protect{cf.~\cite[Lem.~5]{Jin2010}}] \label{lem:well-posedness-free}
    Let $\{\alpha_n\}$ be defined by \eqref{eq:Lag-para} and \eqref{eq:scaled}. Assume that \cref{ass:gtcc} and the first condition in \eqref{eq:rho1_cond} hold. Then, for any $u_0 \in U$ such that $2\norm{e_0}_U < \rho$ with $\rho \in (0,\rho_1]$, there hold
    \begin{enumerate}[label=(\roman*)]
        \item  $u_n \in \overline B_U(u^\dag,\rho)$;
        \item  $\norm{e_n}_U \leq 2 \norm{e_0}_U < \rho$;
        \item  $\norm{G_\dag e_n}_{Y} \leq c_0 \norm{e_0}_U \alpha_n^{1/2}$
    \end{enumerate} 
    for all $n \geq 0$. 
\end{lemma}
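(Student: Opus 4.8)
The plan is to establish the three assertions by induction on $n\ge 0$, mirroring the proof of \cref{lem:well-posedness} but specialized to the noise-free iteration \eqref{eq:LM-iteration-free}, for which $\delta=0$ and $y^\delta=y^\dag$. As there, it suffices to prove the bounds (ii) and (iii), since (i) is then immediate from (ii) together with the hypothesis $2\norm{e_0}_U<\rho$; the base case $n=0$ holds trivially. So I would fix $0\le l$, assume (ii) and (iii) for all $n\le l$, and prove them for $n=l+1$.

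For the induction step, I would set $z_m:=F(u_m)-y^\dag-G_m e_m$ and repeat the algebra of \cref{lem:well-posedness} with $\delta=0$: using $I=\alpha(\alpha I+T)^{-1}+(\alpha I+T)^{-1}T$ together with \cref{lem:transform} to move every resolvent onto $A=G_\dag^*G_\dag$ and $B=G_\dag G_\dag^*$ produces the telescoped representations \eqref{eq:error-plus1} and \eqref{eq:resi-plus1}, now with $e_m$, $w_m$ in place of $e_m^\delta$, $w_m^\delta$ and with
\[
    w_m:=\alpha_m R_{\alpha_m}(u_m,u^\dag)e_m-\bigl[I+S_{\alpha_m}(u_m,u^\dag)\bigr]z_m.
\]
The key simplification is that the $\delta$-contribution in \eqref{eq:w-esti} is absent, so \eqref{eq:RS-esti}, \eqref{eq:z-esti}, and the induction hypotheses $\norm{e_m}_U\le2\norm{e_0}_U$, $\norm{G_\dag e_m}_Y\le c_0\norm{e_0}_U\alpha_m^{1/2}$ combine to the clean bound $\norm{w_m}_Y\le H_1(\rho)\norm{e_0}_U\alpha_m^{1/2}$ with $H_1(\rho):=2\kappa(\rho)+(1+3\kappa(\rho))c_0\eta(\rho)/(1-\eta_0)$, exactly the quantity appearing in the proof of \cref{lem:well-posedness}.

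Inserting this bound into \eqref{eq:error-plus1} and \eqref{eq:resi-plus1} and invoking \cref{lem:spectral-half,lem:spectral-nu,lem:sum-esti} as before would yield $\norm{e_{l+1}}_U\le\bigl(1+\tfrac{c_1}{2}H_1(\rho)\bigr)\norm{e_0}_U$ and $\norm{G_\dag e_{l+1}}_Y\le\bigl(\tfrac12 c_0+c_2H_1(\rho)\bigr)\norm{e_0}_U\alpha_{l+1}^{1/2}$. By the monotonicity of $\kappa,\eta$ on $(0,\rho_0]$ and the \emph{first} inequality in \eqref{eq:rho1_cond}, one has $H_1(\rho)\le c_0/(2c_2)$; combined with $c_0/(2c_2)\le 2/c_1$ this gives $\tfrac{c_1}{2}H_1(\rho)\le1$ and $c_2H_1(\rho)\le c_0/2$, closing the induction at the sharp constants $\norm{e_{l+1}}_U\le2\norm{e_0}_U$ and $\norm{G_\dag e_{l+1}}_Y\le c_0\norm{e_0}_U\alpha_{l+1}^{1/2}$.

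I do not expect a genuine obstacle here, as the computation is a direct specialization of \cref{lem:well-posedness}; the one point meriting attention is that the conclusion must hold for \emph{all} $n\ge0$ rather than only up to a finite stopping index. In the noisy proof, controlling the noise required the inequality $\delta\le\gamma_0\norm{e_0}_U\alpha_m^{1/2}$, which holds only for $m<\tilde N_\delta$, and it was precisely the accompanying $\gamma_0$-terms that forced the \emph{second} condition in \eqref{eq:rho1_cond} (the one bounding $H_2(\rho)$). In the noise-free setting both the $\delta$-term and every $\gamma_0$-term disappear, so $H_2(\rho)$ never enters, only the first condition in \eqref{eq:rho1_cond} is used, and the induction propagates unconditionally to every $n\ge0$.
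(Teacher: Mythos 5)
Your proposal is correct and is precisely the argument the paper intends: the paper omits the proof of \cref{lem:well-posedness-free}, stating only that it follows ``by a similar argument'' to \cref{lem:well-posedness}, and your specialization to $\delta=0$ — with $w_m:=\alpha_m R_{\alpha_m}(u_m,u^\dag)e_m-[I+S_{\alpha_m}(u_m,u^\dag)]z_m$, the bound $\norm{w_m}_Y\leq H_1(\rho)\norm{e_0}_U\alpha_m^{1/2}$, and the closing estimates $\tfrac{c_1}{2}H_1(\rho)\leq 1$ and $c_2H_1(\rho)\leq c_0/2$ via the first condition in \eqref{eq:rho1_cond} and $c_0/(2c_2)\leq 2/c_1$ — matches the form of $w_m$ the paper itself uses later in \cref{lem:error-source-free}. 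Your closing observation correctly identifies why only the first condition in \eqref{eq:rho1_cond} is needed and why the bounds propagate to all $n\geq 0$ rather than only up to $\tilde N_\delta$.
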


The next lemma is a crucial tool in our analysis to prove the well-posedness of the method as well as the asymptotic stability estimates.
\begin{lemma}
    \label{lem:asym-stab}
    Assume that all assumptions in \cref{lem:well-posedness} are satisfied. Then
    \begin{equation}
        \label{eq:asym-stab-repr}
        u_{n+1}^\delta - u_{n+1} = \alpha_n \left(\alpha_n I + A \right)^{-1}(u_n^\delta - u_n) + \sum_{i=1}^5 s_n^{(i)} \quad \text{for all } 0 \leq n < \tilde{N}_\delta,
    \end{equation}
    where
    \begin{equation}
        \label{eq:s-repr}
        s_n^{(i)} = \left(\alpha_k I + A \right)^{-1} G_\dag^{*}\xi_n^{(i)}, \quad i= 1,\ldots,5
    \end{equation}
    with some $\xi_n^{(i)} \in Y$ satisfying
    \begin{equation}
        \label{eq:xi14-esti}
        \sum_{i=1}^4 \norm{\xi_n^{(i)}}_Y \leq L_1(\rho) \left(\alpha_{n}^{ 1/2}\norm{e_n}_U + \alpha_{n}^{ 1/2}\norm{u_n^\delta - u_n}_U + \norm{G_\dag e_n}_Y + \norm{G_\dag (u_n^\delta - u_n)}_Y \right)
    \end{equation}
    and
    \begin{equation}
        \label{eq:xi5-esti}
        \xi_n^{(5)} = (y^\delta -y^\dag) + \tilde{\xi}_n^{(5)} \quad \text {with} \quad \norm{\tilde \xi_n^{(5)}}_Y \leq L_1(\rho)\delta
    \end{equation}
    for 
    \begin{equation}
        \label{eq:L1_func}
        L_1(\rho) := \max \left\{3 \kappa(\rho), (1+3\kappa_0)\left(2 + \frac{3 \eta_0}{1- \eta_0} \right) \kappa(\rho), \frac{(1+3\kappa_0)(1+\kappa_0)}{1- \eta_0} \eta(\rho)  \right\}. 
    \end{equation}
\end{lemma}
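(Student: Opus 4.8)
The plan is to push the two iterations \eqref{eq:LM-iteration} and \eqref{eq:LM-iteration-free} through the same algebra as in the proof of \cref{lem:well-posedness}. With $z_n^\delta := F(u_n^\delta) - y^\dag - G_n^\delta e_n^\delta$, $z_n := F(u_n) - y^\dag - G_n e_n$ and the splitting $I = \alpha_n(\alpha_n I + A_n^\delta)^{-1} + (\alpha_n I + A_n^\delta)^{-1}A_n^\delta$ (and likewise with $A_n$), one obtains
\begin{equation*}
    \begin{aligned}
        e_{n+1}^\delta &= \alpha_n(\alpha_n I + A_n^\delta)^{-1}e_n^\delta + (\alpha_n I + A_n^\delta)^{-1}G_n^{\delta *}(y^\delta - y^\dag - z_n^\delta), \\
        e_{n+1} &= \alpha_n(\alpha_n I + A_n)^{-1}e_n - (\alpha_n I + A_n)^{-1}G_n^* z_n.
    \end{aligned}
\end{equation*}
Subtracting and using $u_{n+1}^\delta - u_{n+1} = e_{n+1}^\delta - e_{n+1}$ splits the difference into a resolvent block acting on the errors, the noise block $(\alpha_n I + A_n^\delta)^{-1}G_n^{\delta *}(y^\delta - y^\dag)$, and the residual block $-(\alpha_n I + A_n^\delta)^{-1}G_n^{\delta *}z_n^\delta + (\alpha_n I + A_n)^{-1}G_n^* z_n$. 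By \cref{lem:well-posedness,lem:well-posedness-free} all iterates lie in $\overline B_U(u^\dag,\rho)$ for $0\le n\le\tilde N_\delta$, so \cref{lem:transform} is applicable throughout; the aim is to rewrite every block through the single fixed operator $A = G_\dag^*G_\dag$, so that each correction takes the form $(\alpha_n I + A)^{-1}G_\dag^*\xi_n^{(i)}$.

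For the resolvent block I would first write $(\alpha_n I + A_n^\delta)^{-1}e_n^\delta - (\alpha_n I + A_n)^{-1}e_n = (\alpha_n I + A_n^\delta)^{-1}(u_n^\delta - u_n) + [(\alpha_n I + A_n^\delta)^{-1} - (\alpha_n I + A_n)^{-1}]e_n$ and apply \eqref{eq:trans1} to the bracket with the \emph{intermediate} base point $u_n$ rather than $u^\dag$, so that $R_{\alpha_n}(u_n^\delta,u_n)$ appears with the available bound $\alpha_n^{-1/2}\kappa(\rho)$. Transforming the leftover leading term $\alpha_n(\alpha_n I + A_n^\delta)^{-1}(u_n^\delta - u_n)$ to base $u^\dag$ via \eqref{eq:trans1}, and the noise and residual blocks via \eqref{eq:trans2}, produces the leading term $\alpha_n(\alpha_n I + A)^{-1}(u_n^\delta - u_n)$ together with the pieces $s_n^{(i)} = (\alpha_n I + A)^{-1}G_\dag^*\xi_n^{(i)}$, where
\begin{align*}
    \xi_n^{(1)} &= \alpha_n R_{\alpha_n}(u_n^\delta,u^\dag)(u_n^\delta - u_n), \\
    \xi_n^{(2)} &= \alpha_n[I + S_{\alpha_n}(u_n,u^\dag)]R_{\alpha_n}(u_n^\delta,u_n)e_n, \\
    \xi_n^{(3)} &= -(z_n^\delta - z_n), \\
    \xi_n^{(4)} &= -S_{\alpha_n}(u_n^\delta,u^\dag)z_n^\delta + S_{\alpha_n}(u_n,u^\dag)z_n, \\
    \xi_n^{(5)} &= [I + S_{\alpha_n}(u_n^\delta,u^\dag)](y^\delta - y^\dag).
\end{align*}
This is precisely \eqref{eq:asym-stab-repr} and \eqref{eq:s-repr}.

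The crux, and the step I expect to be the main obstacle, is that $u\mapsto G_u$ is not continuous, so differences such as $G_n^\delta - G_n$ or $Q(u_n^\delta,u^\dag) - Q(u_n,u^\dag)$ cannot be estimated directly; this is exactly why $\xi_n^{(2)}$ was routed through $u_n$. The device that resolves the surviving difference $z_n^\delta - z_n$ in $\xi_n^{(3)}$ is the transitivity \eqref{eq:Q-trans}: from $G_n^\delta = Q(u_n^\delta,u_n)G_n$ and $e_n^\delta = e_n + (u_n^\delta - u_n)$ one derives the exact identity
\begin{equation*}
    z_n^\delta - z_n = \bigl[F(u_n^\delta) - F(u_n) - G_n(u_n^\delta - u_n)\bigr] - (Q(u_n^\delta,u_n) - I)G_n(u_n^\delta - u_n) - (Q(u_n^\delta,u_n) - I)G_n e_n,
\end{equation*}
in which no base-point difference of $G$ survives. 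The first bracket is controlled by \eqref{eq:GTCC2}, while the two remaining terms expose the factor $Q(u_n^\delta,u_n) - I$, whose norm is at most $\kappa(\rho)$ by \eqref{eq:Q-identity}. Passing from $G_n$ to $G_\dag$ through $G_n = Q(u_n,u^\dag)G_\dag$ and $\norm{Q(u_n,u^\dag)}_{\Linop(Y)}\le 1+\kappa_0$ (by \eqref{eq:Q_upper_bound}) then estimates $\norm{\xi_n^{(3)}}_Y$ by a combination of $\norm{G_\dag e_n}_Y$ and $\norm{G_\dag(u_n^\delta - u_n)}_Y$ carrying the factors $(1+\kappa_0)\kappa(\rho)$ and $(1+\kappa_0)\eta(\rho)/(1-\eta_0)$.

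The remaining estimates are routine bookkeeping. Combining $\norm{R_{\alpha_n}(\cdot,\cdot)}\le\alpha_n^{-1/2}\kappa(\rho)$ and $\norm{S_{\alpha_n}(\cdot,\cdot)}\le 3\kappa(\rho)$ from \eqref{eq:RS-esti} with the residual bound \eqref{eq:z-esti}, the triangle inequality $\norm{G_\dag e_n^\delta}_Y\le\norm{G_\dag e_n}_Y + \norm{G_\dag(u_n^\delta - u_n)}_Y$, and $\norm{I + S_{\alpha_n}(\cdot,\cdot)}\le 1+3\kappa_0$, one finds $\norm{\xi_n^{(1)}}_Y\le\alpha_n^{1/2}\kappa(\rho)\norm{u_n^\delta - u_n}_U$, $\norm{\xi_n^{(2)}}_Y\le\alpha_n^{1/2}(1+3\kappa_0)\kappa(\rho)\norm{e_n}_U$, and $\norm{\xi_n^{(4)}}_Y\le 3\kappa(\rho)\tfrac{\eta(\rho)}{1-\eta_0}(2\norm{G_\dag e_n}_Y + \norm{G_\dag(u_n^\delta - u_n)}_Y)$. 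Adding the four bounds and absorbing all constants (using $\eta(\rho)\le\eta_0$ and $\kappa(\rho)\le\kappa_0$) into the single factor $L_1(\rho)$ of \eqref{eq:L1_func} yields \eqref{eq:xi14-esti}. Finally, since $\xi_n^{(5)} = (y^\delta - y^\dag) + S_{\alpha_n}(u_n^\delta,u^\dag)(y^\delta - y^\dag)$ with $\norm{S_{\alpha_n}(u_n^\delta,u^\dag)(y^\delta - y^\dag)}_Y\le 3\kappa(\rho)\delta\le L_1(\rho)\delta$ by \eqref{eq:noise}, we obtain \eqref{eq:xi5-esti}, which completes the representation.
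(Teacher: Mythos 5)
Your route is essentially the paper's: the same subtraction of \eqref{eq:LM-iteration} and \eqref{eq:LM-iteration-free}, the same rebasing of every resolvent at $u^\dag$ through \cref{lem:transform}, and indeed your $\xi_n^{(1)}$, $\xi_n^{(2)}$, and $\xi_n^{(5)}$ coincide with the paper's choices in \eqref{eq:xi25-define}. Your one deviation is the grouping of the residual block: the paper takes $\xi_n^{(3)} = -[I+S_{\alpha_n}(u_n,u^\dag)]S_{\alpha_n}(u_n^\delta,u_n)z_n$ and $\xi_n^{(4)} = [I+S_{\alpha_n}(u_n^\delta,u^\dag)](z_n-z_n^\delta)$, whereas you take $\xi_n^{(3)}=-(z_n^\delta-z_n)$ and $\xi_n^{(4)}=-S_{\alpha_n}(u_n^\delta,u^\dag)z_n^\delta+S_{\alpha_n}(u_n,u^\dag)z_n$; both groupings produce the same $s_n^{(3)}+s_n^{(4)}$, and your identity for $z_n^\delta-z_n$ via $G_n^\delta=Q(u_n^\delta,u_n)G_n$ is a legitimate variant of the paper's, which instead writes $(G_n^\delta-G_n)e_n=(Q(u_n^\delta,u^\dag)-Q(u_n,u^\dag))G_\dag e_n$ and bounds the $Q$-difference by $2\kappa(\rho)$ through two applications of \eqref{eq:Q-identity} around $I$ — the same device for circumventing the missing continuity of $u\mapsto G_u$. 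So \eqref{eq:asym-stab-repr}, \eqref{eq:s-repr}, and \eqref{eq:xi5-esti} are correctly established.

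The gap is your final absorption claim. The constant $L_1(\rho)$ in \eqref{eq:L1_func} is tailored to the \emph{paper's} grouping, where the whole difference $z_n-z_n^\delta$ is premultiplied by $I+S_{\alpha_n}(u_n^\delta,u^\dag)$, so the coefficient of $\norm{G_\dag(u_n^\delta-u_n)}_Y$ is exactly the third max-entry $(1+3\kappa_0)\tfrac{1+\kappa_0}{1-\eta_0}\eta(\rho)$, while the cross term $[I+S_{\alpha_n}(u_n,u^\dag)]S_{\alpha_n}(u_n^\delta,u_n)z_n$ carries a product $\kappa(\rho)\eta(\rho)$ and loads only $\norm{G_\dag e_n}_Y$, fitting under the second entry. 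With your grouping the total coefficient of $\norm{G_\dag(u_n^\delta-u_n)}_Y$ becomes
\begin{equation*}
    \frac{(1+\kappa_0)\eta(\rho)}{1-\eta_0} + (1+\kappa_0)\kappa(\rho) + \frac{3\kappa(\rho)\eta(\rho)}{1-\eta_0},
\end{equation*}
which lacks the $(1+3\kappa_0)$ slack in the leading term and picks up two extras, and it can exceed $L_1(\rho)$ in admissible regimes: take $\kappa\equiv 0.01$, $\eta\equiv 1/11$ (so $\eta(\rho)/(1-\eta_0)=0.1$) and $r=1/4$; then \eqref{eq:rho1_cond} holds ($0.226\leq 0.25$ and $0.874\leq 1$), the three entries of \eqref{eq:L1_func} are $0.03$, $\approx 0.0237$, $\approx 0.104$, so $L_1(\rho)\approx 0.104$, but your coefficient is $0.101+0.0101+0.003=0.1141$. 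Hence "absorbing all constants into $L_1(\rho)$" does not yield \eqref{eq:xi14-esti} as stated. (Your coefficient of $\norm{G_\dag e_n}_Y$, namely $(1+\kappa_0)\kappa(\rho)+\tfrac{6\kappa(\rho)\eta(\rho)}{1-\eta_0}$, does fit under the max, so this is the only offending term.) The repair is easy: either regroup as the paper does, or accept a slightly enlarged $L_1$ — the latter is harmless downstream, since \cref{lem:asym-auxi}, \eqref{eq:rho2_cond}, and \eqref{eq:rho3_cond} use only the smallness and monotonicity of $L_1(\rho)$, but strictly it does not prove the lemma with the $L_1(\rho)$ of \eqref{eq:L1_func}.
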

\begin{proof}
    For any $m \in \N$, we define
    \begin{equation}
        \label{eq:z-free}
        z_m := F(u_m) - y^\dag - G_m e_m.
    \end{equation} 
    We thus obtain from \eqref{eq:LM-iteration}, \eqref{eq:LM-iteration-free}, \eqref{eq:z-delta}, and \eqref{eq:z-free} that
    \begin{equation*}
        \begin{aligned}
            u_{n+1}^\delta - u_{n+1} &= u_{n}^\delta - u_{n} + \left(\alpha_n I + A_n^\delta \right)^{-1} G_n^{\delta*}\left(y^\delta - F(u_n^\delta) \right) - \left(\alpha_n I + A_n \right)^{-1} G_n^{*}\left(y^\dag - F(u_n) \right) \\
            &= u_{n}^\delta - u_{n} + \left(\alpha_n I + A_n^\delta \right)^{-1} G_n^{\delta*}\left(y^\delta - y^\dag - z_n^\delta - G_n^\delta e_n^\delta \right) \\
            \MoveEqLeft[-1] + \left(\alpha_n I + A_n \right)^{-1} G_n^{*}\left(z_n + G_n e_n \right) \\
            &= u_{n}^\delta - u_{n} + \left(\alpha_n I + A_n^\delta \right)^{-1} G_n^{\delta*}\left(y^\delta - y^\dag \right) - \left(\alpha_n I + A_n^\delta \right)^{-1} G_n^{\delta*}z_n^\delta \\
            \MoveEqLeft[-1] + \left(\alpha_n I + A_n \right)^{-1} G_n^{*} z_n + \left[ \left(\alpha_n I + A_n \right)^{-1} A_n e_n - \left(\alpha_n I + A_n^\delta \right)^{-1} A_n^\delta e_n^\delta\right].
        \end{aligned}
    \end{equation*}
    Furthermore, using the identity $(\alpha I + T)^{-1}T = I - \alpha (\alpha I + T)^{-1}$ gives
    \begin{multline*}
        \left(\alpha_n I + A_n \right)^{-1} A_n e_n -\left(\alpha_n I + A_n^\delta \right)^{-1} A_n^\delta e_n^\delta \\
        \begin{aligned}
            & =\left(\alpha_n I + A_n \right)^{-1} A_n (u_n - u_n^\delta) + \left[ \left(\alpha_n I + A_n \right)^{-1} A_n - \left(\alpha_n I + A_n^\delta \right)^{-1} A_n^\delta \right] e_n^\delta \\
            & = \left(\alpha_n I + A \right)^{-1} A (u_n - u_n^\delta) + \alpha_n \left[ \left(\alpha_n I + A_n \right)^{-1} - \left(\alpha_n I + A \right)^{-1} \right](u_n^\delta - u_n) \\
            \MoveEqLeft[-1] + \alpha_n \left[ \left(\alpha_n I + A_n^\delta \right)^{-1}- \left(\alpha_n I + A_n \right)^{-1} \right] e_n^\delta 
        \end{aligned}
    \end{multline*}
    and thus
    \begin{multline*}
        \left(\alpha_n I + A_n \right)^{-1} A_n e_n -\left(\alpha_n I + A_n^\delta \right)^{-1} A_n^\delta e_n^\delta = \left(\alpha_n I + A \right)^{-1} A (u_n - u_n^\delta) \\
        \begin{aligned}
            & + \alpha_n \left[ \left(\alpha_n I + A_n^\delta \right)^{-1} - \left(\alpha_n I + A \right)^{-1} \right](u_n^\delta - u_n) + \alpha_n \left[ \left(\alpha_n I + A_n^\delta \right)^{-1} - \left(\alpha_n I + A_n \right)^{-1} \right]e_n.
        \end{aligned}
    \end{multline*}
    Defining 
    \begin{align*}
        &s_n^{(1)} := \alpha_n \left[ \left(\alpha_n I + A_n^\delta \right)^{-1} - \left(\alpha_n I + A \right)^{-1} \right](u_n^\delta - u_n),\\
        & s_n^{(2)} := \alpha_n \left[ \left(\alpha_n I + A_n^\delta \right)^{-1} - \left(\alpha_n I + A_n \right)^{-1} \right]e_n,\\
        & s_n^{(3)} := \left[ \left(\alpha_n I + A_n \right)^{-1} G_n^{*} - \left(\alpha_n I + A_n^\delta \right)^{-1} G_n^{\delta*}\right] z_n,\\
        & s_n^{(4)} := \left(\alpha_n I + A_n^\delta \right)^{-1} G_n^{\delta*} (z_n - z_n^\delta), \\
        & s_n^{(5)} := \left(\alpha_n I + A_n^\delta \right)^{-1} G_n^{\delta*} (y^\delta - y^\dag) 
    \end{align*} 
    yields \eqref{eq:asym-stab-repr}. 
    We now verify \eqref{eq:s-repr}, \eqref{eq:xi14-esti}, and \eqref{eq:xi5-esti}. To this end, we use \cref{lem:transform} and obtain that
    \begin{equation*}
        \begin{aligned}
            s_n^{(1)} & = \alpha_n \left(\alpha_n I + A \right)^{-1} G_\dag^* R_{\alpha_n}(u_n^\delta, u^\dag) (u_n^\delta - u_n),
        \end{aligned}
    \end{equation*}
    and so \eqref{eq:s-repr} holds for $i=1$ with
    \begin{equation*}
        \xi_n^{(1)} := \alpha_n R_{\alpha_n}(u_n^\delta, u^\dag)(u_n^\delta - u_n) .
    \end{equation*}
    Note that $u_n^\delta, u_n \in \overline{B}(u^\dag, \rho)$, according to \cref{lem:well-posedness,lem:well-posedness-free}. We thus deduce from \eqref{eq:RS-esti} that
    \begin{equation}
        \label{eq:xi1-esti}
        \begin{aligned}
            \norm{\xi_n^{(1)}}_Y &\leq \kappa(\rho) \alpha_n^{1/2} \norm{ u_n^\delta - u_n}_U.
        \end{aligned}
    \end{equation}
    Similarly, we obtain
    \begin{equation}
        \label{eq:xi25-define}
        \begin{cases}
            & \xi_n^{(2)} := \alpha_n \left[I + S_{\alpha_n}(u_n, u^\dag)\right] R_{\alpha_n}(u_n^\delta, u_n) e_n, \\
            & \xi_n^{(3)} :=- \left[I + S_{\alpha_n}(u_n, u^\dag)\right] S_{\alpha_n}(u_n^\delta, u_n) z_n, \\
            & \xi_n^{(4)} := \left[I + S_{\alpha_n}(u_n^\delta, u^\dag)\right] (z_n - z_n^\delta), \\
            & \xi_n^{(5)} := \left[I + S_{\alpha_n}(u_n^\delta, u^\dag)\right] (y^\delta - y^\dag)
        \end{cases}
    \end{equation}
    and \eqref{eq:s-repr} then follows. 
    Obviously, \eqref{eq:xi5-esti} is verified with $\tilde{\xi}_n^{(5)} := S_{\alpha_n}(u_n^\delta, u^\dag)(y^\delta - y^\dag)$. It remains to prove the estimate \eqref{eq:xi14-esti}. First, it is easy to see from \eqref{eq:RS-esti} and the definition of $\xi_n^{(2)}$ in \eqref{eq:xi25-define} that
    \begin{equation}
        \label{eq:xi2-esti}
        \norm{\xi_n^{(2)}}_Y \leq (1+ 3\kappa(\rho)) \kappa(\rho) \alpha_n^{1/2} \norm{ e_n}_U.
    \end{equation}
    As a result of \eqref{eq:GTCC}, we have
    \begin{equation*}
        \begin{aligned}
            \norm{z_n}_Y & \leq \frac{\eta(\rho)}{1-\eta_0} \norm{G_\dag e_n}_Y,
        \end{aligned}
    \end{equation*}
    which together with \eqref{eq:RS-esti} and the definition of $\xi_n^{(3)}$ yields
    \begin{equation}
        \label{eq:xi3-esti}
        \norm{\xi_n^{(3)}}_Y \leq \frac{3\kappa(\rho)\eta(\rho)(1+3\kappa(\rho))}{1-\eta_0} \norm{G_\dag e_n}_Y.
    \end{equation}
    Furthermore, we can conclude from the definitions of $z_n^\delta$ and $z_n$, \eqref{eq:GTCC}, \eqref{eq:Q-trans}, and \eqref{eq:Q-identity} that
    \begin{equation*}
        \begin{aligned}
            \norm{z_n^\delta -z_n}_Y& \leq \norm{F(u_n)-F(u_n^\delta) - G_n^\delta(u_n - u_n^\delta)}_Y + \norm{(G_n - G_n^\delta)e_n}Y \\
            & \leq \frac{\eta(\rho)}{1-\eta_0} \norm{G_{n}(u_n - u_n^\delta) }_Y + \norm{Q(u_n, u^\dag) - Q(u_n^\delta, u^\dag)}_{\Linop(Y)} \norm{G_\dag e_n}_Y \\
            & \leq \frac{\eta(\rho)}{1-\eta_0} \norm{Q(u_n,u^\dag)G_\dag(u_n - u_n^\delta) }_Y + \norm{Q(u_n, u^\dag) - Q(u_n^\delta, u^\dag)}_{\Linop(Y)} \norm{G_\dag e_n}_Y \\
            & \leq \frac{\eta(\rho)}{1-\eta_0}(1+ \kappa(\rho)) \norm{G_\dag(u_n - u_n^\delta) }_Y + 2\kappa(\rho) \norm{G_\dag e_n}_Y.
        \end{aligned}
    \end{equation*}
    This, the definition of $\xi_n^{(4)}$, and \eqref{eq:RS-esti} therefore imply that
    \begin{equation*}
        \norm{\xi_n^{(4)}}_Y \leq  (1 +3\kappa(\rho)) \left[\frac{1+ \kappa(\rho)}{1-\eta_0} \eta(\rho) \norm{G_\dag(u_n - u_n^\delta) }_Y + 2\kappa(\rho) \norm{G_\dag e_n}_Y\right]. 
    \end{equation*}
    From this, \eqref{eq:xi1-esti}, \eqref{eq:xi2-esti}, \eqref{eq:xi3-esti}, and the monotonic growth of $\kappa$ on $(0,\rho_0]$, we obtain \eqref{eq:xi14-esti}. 
\end{proof}

\begin{lemma} 
    \label{lem:asym-auxi}
    Let all assumptions of \cref{lem:asym-stab} be satisfied. Then there hold
    \begin{equation} \label{eq:asymp-esti-auxi}
        \norm{u^\delta_{l+1} - u_{l+1}}_U \leq \frac{1}{2} c_3 \left(1+ L_1(\rho)\right) \frac{\delta}{\sqrt{\alpha_{l+1}}} + \frac{L_1(\rho)}{2} \sum_{m=0}^{l} \alpha_m^{-1} \left(\sum_{j=m}^l \alpha_j^{-1} \right)^{-1/2} \sigma_m
    \end{equation}
    and
    \begin{equation} \label{eq:G-esti-auxi}
        \norm{G_\dag(u^\delta_{l+1} - u_{l+1})- y^\delta + y^\dag}_Y \leq \delta \left(1+ c_4L_1(\rho)\right) + L_1(\rho)\sum_{m=0}^{l} \alpha_m^{-1} \left(\sum_{j=m}^l \alpha_j^{-1} \right)^{-1} \sigma_m
    \end{equation}
    for all $0 \leq l < \tilde{N}_\delta$ with $L_1(\rho)$ defined as \eqref{eq:L1_func} and
    \begin{equation*}
        \sigma_m := \alpha_{m}^{ 1/2}\norm{e_m}_U +\alpha_{m}^{ 1/2}\norm{u_m^\delta - u_m}_U + \norm{G_\dag e_m}_Y + \norm{G_\dag (u_m^\delta - u_m)}_Y. 
    \end{equation*}
\end{lemma}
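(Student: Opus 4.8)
The plan is to telescope the one-step identity \eqref{eq:asym-stab-repr} of \cref{lem:asym-stab}, exactly as \eqref{eq:error-repr} was telescoped into \eqref{eq:error-plus1}, and then feed the result into the spectral and summation machinery already developed for \cref{lem:well-posedness}. Writing $d_n := u_n^\delta - u_n$ and recalling that $d_0 = u_0^\delta - u_0 = 0$, iterating \eqref{eq:asym-stab-repr} together with \eqref{eq:s-repr} gives
\begin{equation*}
    d_{l+1} = \sum_{m=0}^l \alpha_m^{-1}\prod_{j=m}^l \alpha_j(\alpha_j I + A)^{-1}G_\dag^*\sum_{i=1}^5 \xi_m^{(i)},
\end{equation*}
which is the analogue of \eqref{eq:error-plus1} with the starting term absent because $d_0 = 0$. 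Applying $G_\dag$ and using $G_\dag(\alpha I + A)^{-1} = (\alpha I + B)^{-1}G_\dag$ together with $G_\dag G_\dag^* = B$ yields the corresponding version of \eqref{eq:resi-plus1}, in which $G_\dag^*\xi_m^{(i)}$ is replaced by $B$ acting on $\xi_m^{(i)}$.

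For the first estimate \eqref{eq:asymp-esti-auxi} I would take the $U$-norm, bound $\prod_{j=m}^l \alpha_j(\alpha_j I+A)^{-1}G_\dag^*$ by $\frac{1}{2}(\sum_{j=m}^l\alpha_j^{-1})^{-1/2}$ via \cref{lem:spectral-half} (as in the proof of \cref{lem:well-posedness}), and split the $\xi$-sum into the part $i=1,\dots,4$, controlled by $L_1(\rho)\sigma_m$ through \eqref{eq:xi14-esti}, and the part $i=5$, controlled by $(1+L_1(\rho))\delta$ through \eqref{eq:xi5-esti}. The $\sigma_m$-contribution is already the second term of \eqref{eq:asymp-esti-auxi}; for the $\delta$-contribution I would invoke the summation estimate $\sum_{m=0}^l \alpha_m^{-1}(\sum_{j=m}^l\alpha_j^{-1})^{-1/2} \leq c_3\alpha_{l+1}^{-1/2}$ from \cref{lem:sum-esti}, producing the factor $\frac{1}{2} c_3(1+L_1(\rho))\delta/\sqrt{\alpha_{l+1}}$.

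The second estimate \eqref{eq:G-esti-auxi} is the delicate one, and the crux is that one must \emph{not} simply bound $\norm{\xi_m^{(5)}}_Y$ by $(1+L_1(\rho))\delta$: that would leave the factor $c_4(1+L_1(\rho))$ on $\delta$ rather than the required $1 + c_4 L_1(\rho)$. Instead I would use the splitting $\xi_m^{(5)} = (y^\delta - y^\dag) + \tilde\xi_m^{(5)}$ from \eqref{eq:xi5-esti} and treat the exact-noise part $y^\delta-y^\dag$ separately. The key observation is the telescoping identity
\begin{equation*}
    \sum_{m=0}^l \alpha_m^{-1}\prod_{j=m}^l \alpha_j(\alpha_j I+B)^{-1}B = I - \prod_{j=0}^l \alpha_j(\alpha_j I+B)^{-1},
\end{equation*}
which follows from $(\alpha_m I+B)^{-1}B = I - \alpha_m(\alpha_m I+B)^{-1}$ and the fact that all factors are functions of $B$ and hence commute. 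Consequently the $(y^\delta-y^\dag)$-part of the $i=5$ term equals
\begin{equation*}
    (y^\delta - y^\dag) - \prod_{j=0}^l\alpha_j(\alpha_j I+B)^{-1}(y^\delta-y^\dag),
\end{equation*}
and since each factor $\alpha_j(\alpha_j I+B)^{-1}$ is a contraction on $Y$ (as $B \geq 0$), the residual after subtracting $y^\delta - y^\dag$ is bounded by $\norm{y^\delta - y^\dag}_Y \leq \delta$, contributing the leading $\delta$ with coefficient exactly one.

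The remaining pieces are routine. The terms $i=1,\dots,4$ give, via \eqref{eq:xi14-esti} and the operator bound $\norm{\prod_{j=m}^l\alpha_j(\alpha_j I+B)^{-1}B}_{\Linop(Y)}\leq(\sum_{j=m}^l\alpha_j^{-1})^{-1}$ from \cref{lem:spectral-nu}, the term $L_1(\rho)\sum_{m}\alpha_m^{-1}(\sum_{j}\alpha_j^{-1})^{-1}\sigma_m$ of \eqref{eq:G-esti-auxi}; the $\tilde\xi_m^{(5)}$-part, using $\norm{\tilde\xi_m^{(5)}}_Y\leq L_1(\rho)\delta$ and the summation estimate $\sum_{m}\alpha_m^{-1}(\sum_{j}\alpha_j^{-1})^{-1}\leq c_4$ of \cref{lem:sum-esti}, gives $c_4 L_1(\rho)\delta$. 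Summing the three contributions yields \eqref{eq:G-esti-auxi}. The main obstacle is thus recognizing and exploiting the telescoping identity to peel off the noise term $y^\delta-y^\dag$ with coefficient exactly one; everything else is a repetition of the spectral and summation estimates already used for \cref{lem:well-posedness}.
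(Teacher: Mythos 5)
Your proposal is correct and takes essentially the same route as the paper's proof: you telescope \eqref{eq:asym-stab-repr} using $u_0^\delta = u_0$, apply \cref{lem:spectral-half}, \cref{lem:spectral-nu}, and \cref{lem:sum-esti} exactly as in \cref{lem:well-posedness}, and you identify the one genuinely non-routine step. In particular, your splitting $\xi_m^{(5)} = (y^\delta - y^\dag) + \tilde\xi_m^{(5)}$ combined with the telescoping identity $I - \sum_{m=0}^{l} \alpha_m^{-1}\prod_{j=m}^{l}\alpha_j(\alpha_j I + B)^{-1}B = \prod_{j=0}^{l}\alpha_j(\alpha_j I + B)^{-1}$, which peels off the noise term with coefficient exactly one, is precisely the identity the paper uses to obtain \eqref{eq:G-esti-auxi}.
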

\begin{proof}
    Telescoping \eqref{eq:asym-stab-repr} and \eqref{eq:s-repr} gives
    \begin{equation}
        \label{eq:asym-stab-plus1-0}
        u^\delta_{l+1} - u_{l+1} = \sum_{m=0}^{l} \alpha_m^{-1} \prod_{j=m}^{l} \alpha_j \left(\alpha_j I + A \right)^{-1} G_\dag^* \sum_{i=1}^5 \xi_m^{(i)} 
    \end{equation}
    and thus
    \begin{multline}
        \label{eq:resi-asym-stab-plus1-0}
        G_\dag (u^\delta_{l+1} - u_{l+1}) - (y^\delta - y^\dag) \\
        \begin{aligned}[t]
            &= \sum_{m=0}^{l} \alpha_m^{-1} \prod_{j=m}^{l} \alpha_j \left(\alpha_j I + B\right)^{-1} B \left[ \sum_{i=1}^4 \xi_m^{(i)} + \tilde \xi_m^{(5)} \right] + \left[ I -\sum_{m=0}^{l} \alpha_m^{-1} \prod_{j=m}^{l} \alpha_j \left(\alpha_j I + B\right)^{-1} B \right](y^\dag - y^\delta) \\
            &= \sum_{m=0}^{l} \alpha_m^{-1} \prod_{j=m}^{l} \alpha_j \left(\alpha_j I + B\right)^{-1} B \left[ \sum_{i=1}^4 \xi_m^{(i)} + \tilde \xi_m^{(5)} \right] + \prod_{j=0}^{l} \alpha_j \left(\alpha_j I + B\right)^{-1} (y^\dag - y^\delta),
        \end{aligned}
    \end{multline}
    where we have used the identity
    \begin{equation*}
        I - \sum_{m=0}^{l} \alpha_m^{-1} \prod_{j=m}^{l} \alpha_j \left(\alpha_j I + B\right)^{-1} B = \prod_{j=0}^{l} \alpha_j \left(\alpha_j I + B\right)^{-1}
    \end{equation*}
    to obtain the last equality.
    Applying \cref{lem:spectral-half} to \eqref{eq:asym-stab-plus1-0} and exploiting the estimates \eqref{eq:xi14-esti} as well as \eqref{eq:xi5-esti} yields
    \begin{multline*}
        \norm{u^\delta_{l+1} - u_{l+1}}_U\\
        \begin{aligned}
            & \leq \frac{1}{2} \sum_{m=0}^{l} \alpha_m^{-1} \left(\sum_{j=m}^l \alpha_j^{-1} \right)^{-1/2} \sum_{i=1}^5 \norm{\xi_m^{(i)} }_Y \\ 
            & \leq \frac{L_1(\rho) }{2} \sum_{m=0}^{l} \alpha_m^{-1} \left(\sum_{j=m}^l \alpha_j^{-1} \right)^{-1/2} \left(\alpha_{m}^{ 1/2}\norm{e_m}_U +\alpha_{m}^{ 1/2}\norm{u_m^\delta - u_m}_U + \norm{G_\dag e_m}_Y + \norm{G_\dag (u_m^\delta - u_m)}_Y \right) \\
            \MoveEqLeft[-1] + \frac{1}{2} \delta \left(1+ L_1(\rho)\right) \sum_{m=0}^{l} \alpha_m^{-1} \left(\sum_{j=m}^l \alpha_j^{-1} \right)^{-1/2}. 
        \end{aligned}
    \end{multline*}
    The estimate \eqref{eq:asymp-esti-auxi} then follows from the above estimate and 
    \cref{lem:sum-esti}.
    Similarly, applying \cref{lem:spectral-nu} to \eqref{eq:resi-asym-stab-plus1-0}, using \cref{lem:sum-esti}, and exploiting the estimates \eqref{eq:xi14-esti} as well as \eqref{eq:xi5-esti} yield
    \begin{multline*}
        \norm{G_\dag(u^\delta_{l+1} - u_{l+1})- y^\delta + y^\dag}_Y\\
        \begin{aligned}
            &\leq \delta + \sum_{m=0}^{l} \alpha_m^{-1} \left(\sum_{j=m}^l \alpha_j^{-1} \right)^{-1} \left[ \sum_{i=1}^4 \norm{\xi_m^{(i)} }_Y +\norm{\tilde \xi_m^{(5)} }_Y \right]\\ 
            & \leq L_1(\rho) \sum_{m=0}^{l} \alpha_m^{-1} \left(\sum_{j=m}^l \alpha_j^{-1} \right)^{-1} \left(\alpha_{m}^{ 1/2}\norm{e_m}_U +\alpha_{m}^{ 1/2}\norm{u_m^\delta - u_m}_U + \norm{G_\dag e_m}_Y + \norm{G_\dag (u_m^\delta - u_m)}_Y \right) \\
            \MoveEqLeft[-1] + \delta \left(1+  c_4L_1(\rho) \right), 
        \end{aligned}
    \end{multline*}
    which gives \eqref{eq:G-esti-auxi}.
\end{proof}
\begin{corollary}
    Under the assumptions in \cref{lem:asym-auxi}, there hold that
    \begin{equation} \label{eq:G-esti2} 
        \norm{G_\dag(u^\delta_{l} - u_{l})- y^\delta + y^\dag}_Y \leq \delta \left(1+ c_4L_1(\rho) \right) + L_1(\rho) c_2 \left[ (6 + 3c_0) + \gamma_0(c_1 + 2c_2) \right] \norm{e_0}_U \alpha_l^{1/2}
    \end{equation}
    and
    \begin{equation} \label{eq:F-esti2} 
        \norm{F(u^\delta_{l}) - F(u_{l})- y^\delta + y^\dag}_Y \leq \delta \left(1+ c_4L_1(\rho) \right) +  \left(L_2(\rho) + \gamma_0 L_3(\rho) \right) \norm{e_0}_U \alpha_l^{1/2}
    \end{equation}
    for all $0 \leq l \leq \tilde{N}_\delta$ with 
    \begin{equation}
        \label{eq:L23_func}
        \left\{
            \begin{aligned}
                & L_2(\rho) := c_2(6 + 3c_0)L_1(\rho) + 2c_0\left[\frac{(1+\kappa_0)}{1-\eta_0} \eta(\rho) + \kappa(\rho) \right],\\
                & L_3(\rho):= c_2(c_1 + 2c_2)L_1(\rho) + 2c_2\left[\frac{(1+\kappa_0)}{1-\eta_0} \eta(\rho) + \kappa(\rho) \right].
            \end{aligned}
        \right.
    \end{equation}    
\end{corollary}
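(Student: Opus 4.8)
The plan is to reduce both inequalities to the abstract bound \eqref{eq:G-esti-auxi} of \cref{lem:asym-auxi} combined with the uniform estimates already established in \cref{lem:well-posedness,lem:well-posedness-free}. The only quantity in \eqref{eq:G-esti-auxi} not yet under control is $\sigma_m$, whose last two summands $\alpha_m^{1/2}\norm{u_m^\delta - u_m}_U$ and $\norm{G_\dag(u_m^\delta - u_m)}_Y$ appear to reintroduce exactly the differences we are trying to bound. The observation that breaks this apparent circularity is that no recursion is needed: I would split $u_m^\delta - u_m = e_m^\delta - e_m$ and apply the triangle inequality, so that each of the four terms of $\sigma_m$ is bounded by an individual uniform estimate. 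Concretely, \cref{lem:well-posedness-free} supplies $\norm{e_m}_U \le 2\norm{e_0}_U$ and $\norm{G_\dag e_m}_Y \le c_0\norm{e_0}_U\alpha_m^{1/2}$, while \cref{lem:well-posedness} supplies $\norm{e_m^\delta}_U \le (2 + c_1\gamma_0)\norm{e_0}_U$ and $\norm{G_\dag e_m^\delta}_Y \le (c_0 + 2c_2\gamma_0)\norm{e_0}_U\alpha_m^{1/2}$. Summing the four contributions gives $\sigma_m \le M\norm{e_0}_U\alpha_m^{1/2}$ with $M := (6 + 3c_0) + \gamma_0(c_1 + 2c_2)$.

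For \eqref{eq:G-esti2} I would then insert this bound into \eqref{eq:G-esti-auxi}, shifting the index so that the left-hand side carries the subscript $l$ and the sums run up to $l-1$. Since $\sigma_m \le M\norm{e_0}_U\alpha_m^{1/2}$, the weight $\alpha_m^{-1}(\sum_{j=m}^{l-1}\alpha_j^{-1})^{-1}\sigma_m$ collapses to $M\norm{e_0}_U\,\alpha_m^{-1/2}(\sum_{j=m}^{l-1}\alpha_j^{-1})^{-1}$, and the summation lemma \cref{lem:sum-esti} bounds the resulting geometric sum by $c_2\alpha_l^{1/2}$. This yields precisely the coefficient $L_1(\rho)c_2[(6+3c_0) + \gamma_0(c_1+2c_2)]$ in front of $\norm{e_0}_U\alpha_l^{1/2}$, together with the $\delta(1 + c_4 L_1(\rho))$ term already present in \eqref{eq:G-esti-auxi}. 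The degenerate case $l=0$ is immediate because $u_0^\delta = u_0$, so the left-hand side reduces to $\norm{y^\dag - y^\delta}_Y \le \delta$.

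The estimate \eqref{eq:F-esti2} I would obtain from the triangle-inequality decomposition
\[
F(u_l^\delta) - F(u_l) - (y^\delta - y^\dag)
= \bigl[F(u_l^\delta) - F(u_l) - G_\dag(u_l^\delta - u_l)\bigr] + \bigl[G_\dag(u_l^\delta - u_l) - (y^\delta - y^\dag)\bigr],
\]
whose second bracket is controlled by \eqref{eq:G-esti2}. For the first bracket I would pass from $G_\dag$ to the operator $G_{u_l}$ of the tangential cone condition: writing it as $[F(u_l^\delta) - F(u_l) - G_{u_l}(u_l^\delta - u_l)] + (G_{u_l} - G_\dag)(u_l^\delta - u_l)$, the first term is bounded via \eqref{eq:GTCC2} by $\tfrac{\eta(\rho)}{1-\eta_0}\norm{G_{u_l}(u_l^\delta - u_l)}_Y$, and the second via the transport identity \eqref{eq:Q-trans}, $G_{u_l} - G_\dag = (Q(u_l,u^\dag) - I)G_\dag$, together with \eqref{eq:Q-identity}. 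Using $\norm{Q(u_l,u^\dag)}_{\Linop(Y)} \le 1 + \kappa_0$ from \eqref{eq:Q_upper_bound} to transport $G_{u_l}$ back onto $G_\dag$, the whole first bracket is bounded by $[\tfrac{(1+\kappa_0)}{1-\eta_0}\eta(\rho) + \kappa(\rho)]\norm{G_\dag(u_l^\delta - u_l)}_Y$. Finally $\norm{G_\dag(u_l^\delta - u_l)}_Y \le \norm{G_\dag e_l^\delta}_Y + \norm{G_\dag e_l}_Y \le (2c_0 + 2c_2\gamma_0)\norm{e_0}_U\alpha_l^{1/2}$, and regrouping all coefficients reproduces exactly $L_2(\rho) + \gamma_0 L_3(\rho)$ from \eqref{eq:L23_func}.

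All the manipulations are routine once the decompositions are fixed; the one genuinely delicate point is the first paragraph's remark that the difference terms in $\sigma_m$ must be estimated through the two separate uniform bounds rather than recursively, since any attempt to close a recursion in $\norm{u_m^\delta - u_m}_U$ directly would leave the unknown on both sides. A secondary bookkeeping matter is to consistently use the weaker bound $1 + \kappa(\rho) \le 1 + \kappa_0$ wherever $G_{u_l}$ is transported back to $G_\dag$, so that the emerging constants match the stated $L_2(\rho)$ and $L_3(\rho)$.
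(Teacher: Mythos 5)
Your proposal is correct and follows essentially the same route as the paper: the paper likewise bounds the four terms of $\sigma_m$ directly by the uniform estimates of \cref{lem:well-posedness,lem:well-posedness-free} (yielding the same constant $(6+3c_0)+\gamma_0(c_1+2c_2)$), applies \cref{lem:sum-esti} to the shifted sum, and proves \eqref{eq:F-esti2} via the identical decomposition through \eqref{eq:GTCC2}, the transport identity $G_l - G_\dag = (Q(u_l,u^\dag)-I)G_\dag$, and \eqref{eq:Q_upper_bound}. Your regrouping of constants into $L_2(\rho)+\gamma_0 L_3(\rho)$ and your handling of the trivial case $l=0$ both match the paper's argument.
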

\begin{proof}
    It suffices to prove \eqref{eq:G-esti2} and \eqref{eq:F-esti2} for all $1 \leq l \leq \tilde{N}_\delta$. 
    According to \eqref{eq:G-esti-auxi}, \cref{lem:well-posedness,lem:well-posedness-free}, we have
    \begin{multline*}
        \norm{G_\dag(u^\delta_{l} - u_{l})- y^\delta + y^\dag}_Y \leq \delta \left(1+ c_4L_1(\rho) \right) \\
        \begin{aligned} 
            & + L_1(\rho) \left[ (6 + 3c_0) + \gamma_0 (c_1 + 2c_2) \right] \norm{e_0}_U \sum_{m=0}^{l-1} \alpha_m^{-1/2} \left(\sum_{j=m}^{l-1} \alpha_j^{-1} \right)^{-1},
        \end{aligned}
    \end{multline*} 
    which along with \cref{lem:sum-esti} gives \eqref{eq:G-esti2}. On the other hand, we can deduce from \cref{ass:gtcc} and \eqref{eq:Q_upper_bound} that
    \begin{equation*}
        \begin{aligned}
            \norm{F(u^\delta_{l}) -F(u_l) - y^\delta + y^\dag}_Y & \leq \norm{G_\dag(u^\delta_{l} - u_{l})- y^\delta + y^\dag}_Y \\
            \MoveEqLeft[-1] + \norm{F(u^\delta_{l}) -F(u_l) - G_l(u^\delta_{l} - u_l) }_Y + \norm{(G_l - G_\dag)(u^\delta_{l} - u_l)}_Y \\
            & \leq \norm{G_\dag(u^\delta_{l} - u_{l})- y^\delta + y^\dag}_Y \\
            \MoveEqLeft[-1] + \frac{\eta(\rho)}{1 - \eta_0}\norm{G_l (u^\delta_{l} - u_l) }_Y + \norm{(Q(u_l, u^\dag) - I) G_\dag (u^\delta_{l} - u_l)}_Y \\
            & \leq \norm{G_\dag(u^\delta_{l} - u_{l})- y^\delta + y^\dag}_Y \\
            \MoveEqLeft[-1] + \frac{\eta(\rho)}{1 - \eta_0}\norm{Q(u_l,u^\dag)G_\dag (u^\delta_{l} - u_l) }_Y + \kappa(\rho) \norm{G_\dag (u^\delta_{l} - u_l)}_Y \\
            & \leq \norm{G_\dag(u^\delta_{l} - u_{l})- y^\delta + y^\dag}_Y \\
            \MoveEqLeft[-1] + \left[ \frac{(1+\kappa_0)}{1 - \eta_0}\eta(\rho) +\kappa(\rho)  \right] \norm{G_\dag (u^\delta_{l} - u_l)}_Y.
        \end{aligned}
    \end{equation*}
    From this, \eqref{eq:G-esti2}, \cref{lem:well-posedness,lem:well-posedness-free}, and the monotonic growth of $\kappa$, a simple computation verifies \eqref{eq:F-esti2}.
\end{proof}

We finish this subsection by providing the logarithmic estimate of the stopping index ${N}_\delta$, where we again may have to further restrict the radius $\rho$ of the neighborhood of $u^\dag$.
\begin{lemma} \label{lem:stopping-log-esti}
    Let \cref{ass:gtcc} and \eqref{eq:rho1_cond} be fulfilled and let $\{\alpha_n\}$ be defined by \eqref{eq:Lag-para} and \eqref{eq:scaled}. Assume that $\tau > \tau_0 > 1$, $\gamma_0 \geq \frac{2c_0}{(1-\eta_0)(\tau - \tau_0)}$. Assume further that there exists a positive constant $\rho_2 \leq \rho_1$, with $\rho_1$ given as in \cref{lem:well-posedness}, such that
    \begin{equation}
        \label{eq:rho2_cond}
        c_4 L_1(\rho_2) + L_3(\rho_2) \leq \tau_0 -1 \quad \text{and} \quad	L_2(\rho_2) \leq \frac{c_0}{1-\eta_0}
    \end{equation}
    with $L_i$, $1 \leq i \leq 3$, defined as in \eqref{eq:L1_func} and \eqref{eq:L23_func}.
    Let $\rho \in (0,\rho_2]$ and $u_0 \in U$ be arbitrary such that $(2+c_1\gamma_0)\norm{e_0}_U < \rho$. Then the modified Levenberg--Marquardt iteration \eqref{eq:LM-iteration}--\eqref{eq:disc2} terminates after $N_\delta$ steps with
    \begin{equation*}
        N_\delta = O(1 + |\log(\delta)|). 
    \end{equation*}
\end{lemma}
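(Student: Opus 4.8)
The plan is to show that the discrepancy principle \eqref{eq:disc2} is already satisfied at the auxiliary index $\tilde N_\delta$ defined in \eqref{eq:stop-index2}, so that $N_\delta \le \tilde N_\delta$; the claimed estimate then follows immediately from \cref{lem:loga-esti}, which already provides $\tilde N_\delta = O(1 + |\log(\delta)|)$. Concretely, I would bound the residual $\norm{y^\delta - F(u_l^\delta)}_Y$ for $0 \le l \le \tilde N_\delta$ and verify that at $l = \tilde N_\delta$ it drops below $\tau\delta$.

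First I would compare the noisy iterate to the noise-free one via the triangle inequality,
\begin{equation*}
    \norm{y^\delta - F(u_l^\delta)}_Y \le \norm{F(u_l^\delta) - F(u_l) - y^\delta + y^\dag}_Y + \norm{y^\dag - F(u_l)}_Y.
\end{equation*}
The first term is precisely what is controlled by \eqref{eq:F-esti2}. For the second term I would use $F(u^\dag) = y^\dag$ together with \eqref{eq:GTCC3} (applied with $\hat u = u_l$ and $u = u^\dag$) and the noise-free bound \ref{it:well-posed:3} of \cref{lem:well-posedness-free} to get $\norm{y^\dag - F(u_l)}_Y \le \frac{c_0}{1-\eta_0}\norm{e_0}_U \alpha_l^{1/2}$. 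Adding the two contributions gives a bound of the shape $\delta(1 + c_4 L_1(\rho)) + \big(L_2(\rho) + \gamma_0 L_3(\rho) + \tfrac{c_0}{1-\eta_0}\big)\norm{e_0}_U \alpha_l^{1/2}$.

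The decisive step is to evaluate this at $l = \tilde N_\delta$ and invoke the defining inequality \eqref{eq:stop-index2}, which yields $\norm{e_0}_U \alpha_{\tilde N_\delta}^{1/2} \le \delta/\gamma_0$, converting every $\alpha_{\tilde N_\delta}^{1/2}$-term into a multiple of $\delta/\gamma_0$. The bound then becomes $\delta$ times
\begin{equation*}
    1 + c_4 L_1(\rho) + L_3(\rho) + \frac{1}{\gamma_0}\left(L_2(\rho) + \frac{c_0}{1-\eta_0}\right).
\end{equation*}
Here the only real obstacle is bookkeeping: one must arrange the constants from \eqref{eq:F-esti2} so that the coefficient of $\delta$ cleanly separates into a $\gamma_0$-independent part $c_4 L_1(\rho) + L_3(\rho)$ and a part of order $1/\gamma_0$ assembled from $L_2(\rho)$ and the noise-free residual constant $c_0/(1-\eta_0)$. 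Using the monotonicity of $\kappa,\eta$ (hence of the $L_i$) to replace $\rho$ by $\rho_2$ and applying the first condition in \eqref{eq:rho2_cond} bounds the $\gamma_0$-independent part by $\tau_0 - 1$; the second condition in \eqref{eq:rho2_cond} bounds $L_2(\rho_2) + \frac{c_0}{1-\eta_0}$ by $\frac{2c_0}{1-\eta_0}$; and the hypothesis $\gamma_0 \ge \frac{2c_0}{(1-\eta_0)(\tau-\tau_0)}$ makes the $1/\gamma_0$-part at most $\tau - \tau_0$. Summing, the coefficient is at most $1 + (\tau_0 - 1) + (\tau - \tau_0) = \tau$, so $\norm{y^\delta - F(u_{\tilde N_\delta}^\delta)}_Y \le \tau\delta$.

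Consequently the discrepancy principle \eqref{eq:disc2} stops no later than $\tilde N_\delta$, i.e. $N_\delta \le \tilde N_\delta$, and the logarithmic estimate of \cref{lem:loga-esti} transfers verbatim to $N_\delta$. I do not anticipate any genuine difficulty beyond the constant chasing; the single point requiring care is that both \eqref{eq:F-esti2} and the well-posedness bounds of \cref{lem:well-posedness,lem:well-posedness-free} are guaranteed only for indices up to $\tilde N_\delta$, which is exactly the range exploited above.
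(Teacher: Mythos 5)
Your proposal is correct and follows essentially the same route as the paper: bound $\norm{y^\delta - F(u^\delta_{\tilde N_\delta})}_Y$ via the triangle inequality, \eqref{eq:F-esti2}, \eqref{eq:GTCC3}, and \cref{lem:well-posedness-free}, convert $\alpha_{\tilde N_\delta}^{1/2}\norm{e_0}_U$ into $\delta/\gamma_0$ via \eqref{eq:stop-index2}, and absorb the constants using \eqref{eq:rho2_cond} and the lower bound on $\gamma_0$ to land exactly at $\tau\delta$, so that $N_\delta \le \tilde N_\delta$ and \cref{lem:loga-esti} applies. The only cosmetic difference is that the paper treats the case $\tilde N_\delta = 0$ separately via a direct estimate, whereas your uniform argument also covers it since \eqref{eq:F-esti2} holds trivially at $l = 0$.
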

\begin{proof}
    As a result of \cref{lem:loga-esti}, it suffices to prove $N_\delta \leq \tilde{N}_\delta$. If $\tilde{N}_\delta = 0$, then by definition we have $\alpha_0^{1/2} \norm{e_0}_U \leq \frac{\delta}{\gamma_0} $. The estimate \eqref{eq:GTCC3} thus gives
    \begin{align*}
        \norm{F(u_0) - y^\delta}_Y & \leq \norm{y^\dag - y^\delta}_Y + \norm{F(u_0) - y^\dag}_Y \\
        & \leq \delta + \frac{1}{1 -\eta_0} \norm{G_\dag e_0}_Y \\
        & \leq \delta + \frac{1}{1- \eta_0} \norm{e_0}_U \alpha_0^{1/2} \\
        & \leq \delta \left(1 + \frac{1}{(1- \eta_0)\gamma_0} \right) \\
        & < \tau \delta.
    \end{align*}
    In the following we shall assume $\tilde{N}_\delta >0$. 
    We deduce from \eqref{eq:F-esti2} for $l = \tilde{N}_\delta$ that
    \begin{equation*} 
        \begin{aligned}
            \norm{F(u^\delta_{\tilde{N}_\delta}) - F(u_{\tilde{N}_\delta })- y^\delta + y^\dag}_Y \leq \delta \left(1+ c_4L_1(\rho) \right) + \left(L_2(\rho) + \gamma_0 L_3(\rho) \right) \norm{e_0}_U \alpha_{\tilde{N}_\delta}^{1/2}.
        \end{aligned}
    \end{equation*}
    Using \eqref{eq:GTCC3}, \cref{lem:well-posedness-free}, and noting that $\alpha_{\tilde{N}_\delta }^{1/2} \norm{e_0}_U \leq \frac{\delta}{\gamma_0}$, we derive
    \begin{equation*}
        \begin{aligned}
            \norm{y^\delta -F(u^\delta_{\tilde{N}_\delta})}_Y & \leq \norm{F(u^\delta_{\tilde{N}_\delta}) -F(u_{\tilde{N}_\delta}) - y^\delta + y^\dag }_Y + \norm{F(u_{\tilde{N}_\delta})- y^\dag}_Y \\
            & \leq \norm{F(u^\delta_{\tilde{N}_\delta}) -F(u_{\tilde{N}_\delta}) - y^\delta + y^\dag }_Y + \frac{1}{1-\eta_0} \norm{G_\dag e_{\tilde{N}_\delta}}_Y\\
            & \leq \delta \left(1+ c_4L_1(\rho) \right) +  \left(L_2(\rho) + \gamma_0 L_3(\rho) \right) \norm{e_0}_U \alpha_{\tilde{N}_\delta}^{1/2} +\frac{c_0}{1-\eta_0} \norm{e_0}_U \alpha_{\tilde{N}_\delta}^{1/2} \\
            & \leq \delta 
            \left[ 1 + (c_4 L_1(\rho)+ L_3(\rho)) + \frac{1}{\gamma_0} \left(\frac{c_0}{1-\eta_0} + L_2(\rho) \right) \right].
        \end{aligned}
    \end{equation*}
    Combining this with \eqref{eq:rho2_cond}, the definitions of $L_i$, $ 1 \leq i \leq 3$, and the monotonic growth of $\kappa, \eta$, we obtain
    \begin{equation*}
        \norm{y^\delta -F(u^\delta_{\tilde{N}_\delta})}_Y \leq \delta \left( \tau_0 + \frac{1}{\gamma_0}\frac{2c_0}{1 - \eta_0} \right) \leq \delta \tau.
    \end{equation*}
    From this and the definition of $N_\delta$, we have $N_\delta \leq \tilde{N}_\delta$.
\end{proof}

\subsection{Convergence in the noise free setting} \label{sec:convergence-free}

In this subsection we will show the convergence of the sequence $\{u_n\}$ defined via \eqref{eq:LM-iteration-free}, provided that $e_0 \in \mathcal{N}(G_\dag)^{\bot}$ and that the parameter $\eta(\rho)$ and $\kappa(\rho)$ are small enough if the radius $\rho$ can be chosen accordingly.

We first derive some estimates on $e_n$ and $G_\dag e_n$ under the \emph{generalized source condition}
\begin{equation}
    \label{eq:source-cond}
    e_0 = A^\nu w
\end{equation}
for some $\nu \in (0, \frac{1}{2})$ and some $w \in U$, where $A=G_\dag^*G_\dag$. Again, we may have to restrict $\rho$ further.
\begin{lemma} \label{lem:error-source-free}
    Let all assumptions in \cref{lem:well-posedness-free} hold. Assume further that there exists a constant $\bar \rho_1 \in (0,\rho_1]$, with $\rho_1$ given as in \cref{lem:well-posedness}, satisfying
    \begin{equation}
        \label{eq:rho_1bar_cond}
        \kappa(\bar\rho_1) + c_0 \frac{1+3\kappa_0}{1-\eta_0} \eta(\bar\rho_1) \leq \frac{c_0}{2K_1(r,\nu)}
    \end{equation}
    for some constant $\nu \in (0, \frac{1}{2})$. Let $\rho \in (0,\bar\rho_1]$ be arbitrary and let $u_0 \in U$ satisfy $2\norm{e_0}_U < \rho$ and $e_0 = A^\nu w$ for  some $w \in U$. Then there hold
    \begin{equation}
        \label{eq:error-free-source}
        \norm{e_n}_U \leq 2c_0 \alpha_n^\nu \norm{w}_U \quad \text {and} \quad \norm{G_\dag e_n}_Y \leq 2c_0^2 \alpha_n^{\nu + 1/2} \norm{w}_U
    \end{equation}
    for all $n \geq 0$.
\end{lemma}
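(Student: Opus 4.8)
The proof will proceed by induction on $n$, mirroring the structure already used in \cref{lem:well-posedness-free} but now tracking the sharper $\alpha_n^\nu$-type decay afforded by the source condition $e_0 = A^\nu w$. The base case $n=0$ is immediate: since $\alpha_0 \geq \norm{G_\dag}_{\Linop(U,Y)}^2$, the operator $A = G_\dag^* G_\dag$ satisfies $\norm{A^\nu} \leq \alpha_0^\nu$, so $\norm{e_0}_U = \norm{A^\nu w}_U \leq \alpha_0^\nu \norm{w}_U \leq 2c_0 \alpha_0^\nu \norm{w}_U$, and similarly $\norm{G_\dag e_0}_Y = \norm{G_\dag A^\nu w}_Y \leq \alpha_0^{\nu + 1/2} \norm{w}_U$ using $\norm{G_\dag A^\nu} \leq \alpha_0^{\nu+1/2}$.

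For the induction step, I would reuse the telescoped error representation \eqref{eq:error-plus1} and \eqref{eq:resi-plus1} with $\delta = 0$, so that $w_m = \alpha_m R_{\alpha_m}(u_m, u^\dag) e_m - [I + S_{\alpha_m}(u_m,u^\dag)] z_m$ (the noise term drops out). The key idea is to propagate the source condition through the first (homogeneous) term: because $e_0 = A^\nu w$, the leading term $\prod_{m=0}^{l} \alpha_m (\alpha_m I + A)^{-1} e_0$ can be bounded by a spectral estimate of the form $\prod_{m} \alpha_m (\alpha_m I + A)^{-1} A^\nu$, which decays like $\alpha_{l+1}^\nu$; this is precisely where the constants $K_0(r,\nu), K_1(r,\nu)$ from \eqref{eq:constants-K} enter, presumably via an auxiliary spectral lemma (analogous to \cref{lem:spectral-half,lem:spectral-nu}) that the paper invokes. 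The perturbation sum is then controlled using $\norm{w_m}_Y \leq \kappa(\rho)\alpha_m^{1/2}\norm{e_m}_U + (1+3\kappa(\rho))\frac{\eta(\rho)}{1-\eta_0}\norm{G_\dag e_m}_Y$ together with the induction hypothesis $\norm{e_m}_U \leq 2c_0 \alpha_m^\nu \norm{w}_U$ and $\norm{G_\dag e_m}_Y \leq 2c_0^2 \alpha_m^{\nu+1/2}\norm{w}_U$, yielding a bound of the shape $\bigl(\kappa(\rho) + c_0\frac{1+3\kappa_0}{1-\eta_0}\eta(\rho)\bigr)\cdot(\text{something})\cdot\alpha_m^{\nu+1/2}\norm{w}_U$.

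After summing, the factor multiplying the perturbation will involve a geometric-type sum over the $\alpha_m = \alpha_0 r^m$ producing $K_0(r,\nu)$ or $K_1(r,\nu)$; the smallness condition \eqref{eq:rho_1bar_cond}, namely $\kappa(\bar\rho_1) + c_0\frac{1+3\kappa_0}{1-\eta_0}\eta(\bar\rho_1) \leq \frac{c_0}{2K_1(r,\nu)}$, is exactly calibrated so that this perturbation contributes at most half of the desired constant, allowing the induction to close with the factor $2c_0$ (resp. $2c_0^2$) preserved. The main obstacle I anticipate is the spectral estimate for the source-condition term: one must bound $\norm{\prod_{m=0}^{l}\alpha_m(\alpha_m I + A)^{-1}A^\nu}$ and the mixed sums $\sum_m \alpha_m^{-1}\prod_{j=m}^l \alpha_j(\alpha_j I + A)^{-1}A^{\nu+1/2}$ uniformly in the spectral variable, and verify that these geometric products over $\alpha_n = \alpha_0 r^n$ yield precisely the constants $K_0, K_1$ — this bookkeeping, rather than any conceptual difficulty, is where the care is required.
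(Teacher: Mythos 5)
Your proposal follows essentially the same route as the paper's proof: induction on $n$, the telescoped noise-free representations \eqref{eq:error-plus1-free} and \eqref{eq:resi-plus1-free} with $w_m = \alpha_m R_{\alpha_m}(u_m,u^\dag)e_m - [I+S_{\alpha_m}(u_m,u^\dag)]z_m$, the bound $\norm{w_m}_Y \leq \kappa(\rho)\alpha_m^{1/2}\norm{e_m}_U + (1+3\kappa(\rho))\frac{\eta(\rho)}{1-\eta_0}\norm{G_\dag e_m}_Y$ combined with the induction hypothesis, and the smallness condition \eqref{eq:rho_1bar_cond} (together with $c_0/(2K_1(r,\nu)) \leq 1/K_0(r,\nu)$) to close the induction with constants $2c_0$ and $2c_0^2$. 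The auxiliary spectral estimates you anticipate are exactly the paper's \cref{lem:spectral-nu,lem:spectral-halfnu} for the source-condition term and \cref{lem:sum-nu} for the geometric sums yielding $K_0(r,\nu)$ and $K_1(r,\nu)$, so the bookkeeping you flag as the main obstacle is already supplied by the appendix.
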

\begin{proof}
    We shall prove the lemma by induction on $n$. Obviously, \eqref{eq:error-free-source} is valid for $n=0$. We now assume that \eqref{eq:error-free-source} holds for all $0 \leq n \leq l$ and prove it is also true for $n = l+1$. An argument similar to the one used to obtain \eqref{eq:error-plus1} and \eqref{eq:resi-plus1} yields
    \begin{equation}
        \label{eq:error-plus1-free}
        e_{l+1} = \prod_{m=0}^{l} \alpha_m \left(\alpha_m I + A \right)^{-1} e_0 + \sum_{m=0}^{l} \alpha_m^{-1} \prod_{j=m}^{l} \alpha_j \left(\alpha_j I + A \right)^{-1} G_\dag^*w_m 
    \end{equation}
    and
    \begin{equation}
        \label{eq:resi-plus1-free}
        G_\dag e_{l+1} = \prod_{m=0}^{l} \alpha_m \left(\alpha_m I + B \right)^{-1} G_\dag e_0 + \sum_{m=0}^{l} \alpha_m^{-1} \prod_{j=m}^{l} \alpha_j \left(\alpha_j I + B\right)^{-1} B w_m,
    \end{equation}
    where, analogous to \eqref{eq:w-define},
    \begin{equation*}
        w_m := \alpha_m R_{\alpha_m}(u_m, u^\dag) e_m - \left[ I + S_{\alpha_m}(u_m, u^\dag)\right] z_m
    \end{equation*}
    for all $0 \leq m \leq l$ with $z_m$ defined via \eqref{eq:z-free}. Similarly to \eqref{eq:w-esti}, we have
    \begin{equation*}
        \norm{w_m }_Y \leq \kappa(\rho) \alpha_m^{1/2} \norm{e_m}_U + \left(1 + 3\kappa(\rho)\right)\frac{\eta(\rho)}{1 -\eta_0} \norm{G_\dag e_m}_Y. 
    \end{equation*}
    This and the induction hypothesis yield
    \begin{equation}
        \label{eq:w-esti-free}
        \norm{w_m }_Y \leq Q_1(\rho) \alpha_{m}^{\nu + 1/2}\norm{w}_U
    \end{equation}
    for all $0 \leq m \leq l$
    with
    \begin{equation*}
        Q_1(\rho) := 2c_0\kappa(\rho)+ 2c_0^2 \frac{1+3\kappa(\rho)}{1 - \eta_0}\eta(\rho).
    \end{equation*}
    Inserting $e_0 = A^\nu w$ into \eqref{eq:error-plus1-free} and then applying \cref{lem:spectral-nu,lem:spectral-half}, we deduce
    \begin{equation*}
        \begin{aligned}
            \norm{e_{l+1}}_U & \leq \norm{w}_U \left(\sum_{j=m}^l \alpha_j^{-1} \right)^{-\nu} + \frac{1}{2} \sum_{m=0}^{l} \alpha_m^{-1} \left(\sum_{j=m}^l \alpha_j^{-1} \right)^{-1/2} \norm{w_m}_Y \\
            & \leq c_0^{2\nu} \alpha_{l+1}^\nu \norm{w}_U + \frac{1}{2} Q_1(\rho) \norm{w}_U \sum_{m=0}^{l} \alpha_m^{\nu-1/2} \left(\sum_{j=m}^l \alpha_j^{-1} \right)^{-1/2} \\
            & \leq c_0 \alpha_{l+1}^\nu \norm{w}_U + \frac{1}{2} Q_1(\rho) K_0(r,\nu) \alpha_{l+1}^\nu \norm{w}_U,
        \end{aligned}
    \end{equation*}
    where we used \eqref{eq:w-esti-free} and \cref{lem:sum-esti} to obtain the second inequality and exploited \cref{lem:sum-nu} to obtain the last inequality. 
    By virtue of \eqref{eq:rho_1bar_cond}, the fact that $c_0 /(2K_1(r,\nu)) \leq 1/K_0(r,\nu)$, and the monotonic growth of $\kappa$ and $\eta$, it holds that
    \begin{equation}
        \label{eq:error-esti-free}
        \norm{e_{l+1}}_U \leq 2 c_0 \alpha_{l+1}^\nu \norm{w}_U.
    \end{equation}
    Moreover, by inserting $e_0 = A^\nu w$ into \eqref{eq:resi-plus1-free}, \cref{lem:spectral-halfnu,lem:spectral-nu} and \eqref{eq:w-esti-free} reveal that 
    \begin{equation*}
        \begin{aligned}
            \norm{G_\dag e_{l+1}}_Y & \leq \norm{w}_U \left(\sum_{j=0}^l \alpha_j^{-1} \right)^{-\nu-1/2} + \sum_{m=0}^{l} \alpha_m^{-1} \left(\sum_{j=m}^l \alpha_j^{-1} \right)^{-1} \norm{w_m}_Y \\
            & \leq c_0^{2\nu+1} \alpha_{l+1}^{\nu +1/2} \norm{w}_U + Q_1(\rho) \norm{w}_U \sum_{m=0}^{l} \alpha_m^{\nu-1/2} \left(\sum_{j=m}^l \alpha_j^{-1} \right)^{-1} \\
            & \leq c_0^2 \alpha_{l+1}^{\nu +1/2} \norm{w}_U + Q_1(\rho) K_1(r,\nu) \alpha_{l+1}^{\nu +1/2} \norm{w}_U.
        \end{aligned}
    \end{equation*}
    Here the second estimate is derived using \cref{lem:sum-esti} while the last estimate is obtained using \cref{lem:sum-nu}. Then there holds
    \begin{equation}
        \label{eq:resi-esti-free}
        \norm{G_\dag e_{l+1}}_Y \leq 2c_0^2 \alpha_{l+1}^{\nu +1/2} \norm{w}_U.
    \end{equation}
    From \eqref{eq:error-esti-free} and \eqref{eq:resi-esti-free}, we conclude that \eqref{eq:error-free-source} is fulfilled with $n = l+1$. 
\end{proof}

We now take $\hat u_0 \in U$ to be a perturbation of $u_0 \in U$ and denote by $\{\hat u_n \}$ the iterates given by \eqref{eq:LM-iteration-free} with $u_0$ replaced by $\hat u_0$, that is,
\begin{equation}
    \label{eq:LM-iteration-free-app}
    \hat u_{n+1} = \hat u_n + \left(\alpha_n I + G_{\hat u_n}^*G_{\hat u_n}\right)^{-1}G_{\hat u_n}^*\left(y^\dag - F(\hat u_n) \right), \quad n = 0,1,\ldots
\end{equation} 

For ease of exposition, from now on, we use the notations
\begin{align*}
    &\hat e_n := \hat u_n - u^\dag,&& \quad \hat G_n := G_{\hat u_n},&& \quad \hat A_n :=\hat G_n^*\hat G_n, && \quad \hat B_n := \hat G_n\hat G_n^*, && \quad \hat z_n := F(\hat u_n) - y^\dag - \hat G_n \hat e_n.
\end{align*}

The next lemma is analogous to \cref{lem:asym-stab}.
\begin{lemma} \label{lem:stab-repr}
    Let all assumptions in \cref{lem:well-posedness-free} be fulfilled and let $\bar\rho_1 \in (0, \rho_1]$ and $\nu \in (0,\frac{1}{2})$ satisfy \eqref{eq:rho_1bar_cond}. Assume that $\rho \in (0,\bar\rho_1]$ and that $u_0, \hat u_0 \in U$ satisfy $\min\{2\norm{e_0}_ U,2\norm{\hat e_0}_ U \}  < \rho$ and $\hat e_0 = A^\nu w$ for some $w \in U$. Then for all $k \geq 0$, there holds
    \begin{equation}
        \label{eq:stab-repr}
        u_{k+1} - \hat u_{k+1} = \alpha_k \left(\alpha_k I + A \right)^{-1}(u_k - \hat u_k) + \sum_{i=1}^4 t_k^{(i)},
    \end{equation}
    where
    \begin{equation}
        \label{eq:t-repr}
        t_k^{(i)} = \left(\alpha_k I + A \right)^{-1} G_\dag^{*}h_k^{(i)}, \quad i= 1,2,3,4
    \end{equation}
    with some $h_k^{(i)} \in Y$ satisfying
    \begin{equation}
        \label{eq:h-esti}
        \sum_{i=1}^4 \norm{h_k^{(i)}}_Y \leq C(\rho) \left[ \norm{w}_U \alpha_{k}^{\nu + 1/2} + \norm{u_k - \hat u_k}_U \alpha_{k}^{ 1/2} +\norm{G_\dag (u_k - \hat u_k)}_Y \right]
    \end{equation}
    for 
    \begin{equation}
        \label{eq:C_func}
        C(\rho) := (1+3\kappa_0) \max \left\{ 2c_0\left( \frac{3c_0\eta_0}{1-\eta_0} + 1 + 2c_0(1+\kappa_0) \right) \kappa(\rho),  \frac{(1 +\kappa_0)}{1-\eta_0} \eta(\rho) \right \}.
    \end{equation}
\end{lemma}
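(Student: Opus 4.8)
The plan is to follow the same algebraic route as in \cref{lem:asym-stab}, now with the two noise-free sequences $\{u_k\}$ and $\{\hat u_k\}$ in place of $\{u_n^\delta\}$ and $\{u_n\}$, and to use the source condition $\hat e_0 = A^\nu w$ to control every quantity that measures the distance of $u_k$ to $u^\dag$. First I would subtract \eqref{eq:LM-iteration-free-app} from \eqref{eq:LM-iteration-free}, insert $y^\dag - F(u_k) = -z_k - G_k e_k$ and $y^\dag - F(\hat u_k) = -\hat z_k - \hat G_k \hat e_k$ (recall \eqref{eq:z-free} and the definition of $\hat z_k$), and apply the identity $(\alpha I + T)^{-1}T = I - \alpha(\alpha I + T)^{-1}$. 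Since $e_k - \hat e_k = u_k - \hat u_k$, the terms $(u_k-\hat u_k) - e_k + \hat e_k$ cancel, leaving
\begin{equation*}
    u_{k+1} - \hat u_{k+1} = \alpha_k(\alpha_k I + A_k)^{-1}e_k - \alpha_k(\alpha_k I + \hat A_k)^{-1}\hat e_k - (\alpha_k I + A_k)^{-1}G_k^* z_k + (\alpha_k I + \hat A_k)^{-1}\hat G_k^*\hat z_k.
\end{equation*}
Writing $e_k = (u_k - \hat u_k) + \hat e_k$ in the first term and replacing $(\alpha_k I + A_k)^{-1}$ by $(\alpha_k I + A)^{-1}$ extracts the leading term $\alpha_k(\alpha_k I + A)^{-1}(u_k - \hat u_k)$ of \eqref{eq:stab-repr}; the residual contributions are two resolvent differences acting on $u_k - \hat u_k$ and on $\hat e_k$, together with the two terms carrying $z_k$ and $\hat z_k$.

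Next I would invoke \cref{lem:transform}, applying \eqref{eq:trans1} to each resolvent difference and \eqref{eq:trans2} (together with $(\alpha_k I + A_k)^{-1}G_k^* = (\alpha_k I + A)^{-1}G_\dag^*[I + S_{\alpha_k}(u_k, u^\dag)]$ and its analogue at $\hat u_k$) to the $z$-terms, so as to pull the common left factor $(\alpha_k I + A)^{-1}G_\dag^*$ out of every term. This gives \eqref{eq:t-repr} with
\begin{align*}
    h_k^{(1)} &:= \alpha_k R_{\alpha_k}(u_k, u^\dag)(u_k - \hat u_k), \\
    h_k^{(2)} &:= \alpha_k[I + S_{\alpha_k}(\hat u_k, u^\dag)]R_{\alpha_k}(u_k, \hat u_k)\hat e_k, \\
    h_k^{(3)} &:= [I + S_{\alpha_k}(u_k, u^\dag)]S_{\alpha_k}(\hat u_k, u_k)z_k, \\
    h_k^{(4)} &:= [I + S_{\alpha_k}(\hat u_k, u^\dag)](\hat z_k - z_k).
\end{align*}
Here both $u_k$ and $\hat u_k$ lie in $\overline B_U(u^\dag, \rho)$ by \cref{lem:well-posedness-free} applied to $\{u_k\}$ and by \cref{lem:error-source-free} applied to $\{\hat u_k\}$, which is what legitimizes the bounds \eqref{eq:RS-esti} for the $R$- and $S$-operators.

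It then remains to establish \eqref{eq:h-esti}. The estimates \eqref{eq:RS-esti} yield $\norm{h_k^{(1)}}_Y \leq \kappa(\rho)\alpha_k^{1/2}\norm{u_k - \hat u_k}_U$ at once, and $\norm{h_k^{(2)}}_Y \leq (1+3\kappa_0)\kappa(\rho)\alpha_k^{1/2}\norm{\hat e_k}_U$, whereupon the first bound in \eqref{eq:error-free-source}, $\norm{\hat e_k}_U \leq 2c_0\alpha_k^\nu\norm{w}_U$, converts the right-hand side into a multiple of $\norm{w}_U\alpha_k^{\nu+1/2}$. For $h_k^{(3)}$ and $h_k^{(4)}$ I would estimate $\norm{z_k}_Y \leq \frac{\eta(\rho)}{1-\eta_0}\norm{G_\dag e_k}_Y$ from \eqref{eq:GTCC2}--\eqref{eq:GTCC3}, and, arguing exactly as for $z_n^\delta - z_n$ in \cref{lem:asym-stab} with \eqref{eq:GTCC2}, \eqref{eq:Q-trans}, \eqref{eq:Q-identity}, and \eqref{eq:Q_upper_bound},
\begin{equation*}
    \norm{\hat z_k - z_k}_Y \leq \frac{(1+\kappa_0)\eta(\rho)}{1-\eta_0}\norm{G_\dag(u_k - \hat u_k)}_Y + 2\kappa(\rho)\norm{G_\dag e_k}_Y,
\end{equation*}
using $\hat z_k - z_k = [F(\hat u_k) - F(u_k) - \hat G_k(\hat u_k - u_k)] + (G_k - \hat G_k)e_k$. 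Combined with \eqref{eq:RS-esti} and \eqref{eq:Q_upper_bound}, these bound $\norm{h_k^{(3)}}_Y$ and $\norm{h_k^{(4)}}_Y$ in terms of $\norm{G_\dag e_k}_Y$ and $\norm{G_\dag(u_k - \hat u_k)}_Y$.

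The decisive step -- and the only genuine departure from \cref{lem:asym-stab} -- is that $\norm{G_\dag e_k}_Y$ measures the distance of $u_k$ to the true solution $u^\dag$, for which no source condition is available, so it may not appear on the right-hand side of \eqref{eq:h-esti}. The remedy is to route it through the source-condition sequence: since $e_k = \hat e_k + (u_k - \hat u_k)$, the second bound in \eqref{eq:error-free-source} gives
\begin{equation*}
    \norm{G_\dag e_k}_Y \leq \norm{G_\dag\hat e_k}_Y + \norm{G_\dag(u_k - \hat u_k)}_Y \leq 2c_0^2\alpha_k^{\nu+1/2}\norm{w}_U + \norm{G_\dag(u_k - \hat u_k)}_Y,
\end{equation*}
which replaces each occurrence of $\norm{G_\dag e_k}_Y$ by the two admissible quantities $\norm{w}_U\alpha_k^{\nu+1/2}$ and $\norm{G_\dag(u_k - \hat u_k)}_Y$. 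Substituting this into the bounds for $h_k^{(3)}$, $h_k^{(4)}$ and collecting the resulting constants -- each a product of $(1+3\kappa_0)$ with a $\kappa(\rho)$- or an $\eta(\rho)$-factor -- into the single constant $C(\rho)$ of \eqref{eq:C_func} yields \eqref{eq:h-esti}. I expect the $\hat z_k - z_k$ manipulation and the final constant bookkeeping to be the delicate parts; the algebraic skeleton is identical to that of \cref{lem:asym-stab}.
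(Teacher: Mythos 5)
Your algebraic skeleton is sound and, up to one grouping choice, coincides with the paper's: the error identity for $u_{k+1}-\hat u_{k+1}$, the terms $t_k^{(1)}, t_k^{(2)}$ with $h_k^{(1)}, h_k^{(2)}$, and the use of \cref{lem:transform} to pull out the common factor $(\alpha_k I + A)^{-1}G_\dag^*$ are exactly as in the paper, and your mirrored grouping of the $z$-terms, $\bigl[(\alpha_k I + \hat A_k)^{-1}\hat G_k^* - (\alpha_k I + A_k)^{-1}G_k^*\bigr]z_k + (\alpha_k I + \hat A_k)^{-1}\hat G_k^*(\hat z_k - z_k)$, is algebraically valid. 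The gap is in the final bookkeeping: with your grouping, \eqref{eq:h-esti} does not come out with the constant $C(\rho)$ of \eqref{eq:C_func}. Your $h_k^{(3)}$ carries $z_k$, bounded via $\frac{\eta(\rho)}{1-\eta_0}\norm{G_\dag e_k}_Y$, and your $h_k^{(4)}$ contains $(G_k - \hat G_k)e_k$, bounded via $2\kappa(\rho)\norm{G_\dag e_k}_Y$; after your triangle-inequality split $\norm{G_\dag e_k}_Y \leq \norm{G_\dag \hat e_k}_Y + \norm{G_\dag(u_k - \hat u_k)}_Y$, the total coefficient of $\norm{G_\dag(u_k-\hat u_k)}_Y$ in $\sum_i \norm{h_k^{(i)}}_Y$ becomes
\begin{equation*}
    (1+3\kappa_0)\Bigl[\frac{(1+\kappa_0)\eta(\rho)}{1-\eta_0} + \Bigl(2 + \frac{3\eta(\rho)}{1-\eta_0}\Bigr)\kappa(\rho)\Bigr],
\end{equation*}
which strictly exceeds the $\eta$-branch $(1+3\kappa_0)\frac{(1+\kappa_0)}{1-\eta_0}\eta(\rho)$ of the max in \eqref{eq:C_func} whenever $\kappa(\rho)>0$. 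Since a max is not a sum, the extra pure-$\kappa(\rho)$ contributions cannot be absorbed into \eqref{eq:C_func}; your route proves \eqref{eq:h-esti} only with a larger constant (at worst $2C(\rho)$), not with the constant stated in the lemma.

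The paper avoids this by the opposite grouping: it puts $\hat z_k$ (not $z_k$) into the difference term, $t_k^{(3)} = \bigl[(\alpha_k I + \hat A_k)^{-1}\hat G_k^* - (\alpha_k I + A_k)^{-1}G_k^*\bigr]\hat z_k$ and $t_k^{(4)} = (\alpha_k I + A_k)^{-1}G_k^*(\hat z_k - z_k)$, and decomposes $\hat z_k - z_k = \bigl[F(\hat u_k) - F(u_k) - G_k(\hat u_k - u_k)\bigr] + (G_k - \hat G_k)\hat e_k$, i.e., remainder based at $u_k$ and with $\hat e_k$ rather than $e_k$ in the second term. Then every $\kappa$-factor multiplies a source-condition quantity ($\norm{G_\dag \hat e_k}_Y \leq 2c_0^2 \alpha_k^{\nu+1/2}\norm{w}_U$ by \cref{lem:error-source-free}), and the only factor attached to $\norm{G_\dag(u_k - \hat u_k)}_Y$ is $\frac{(1+\kappa_0)}{1-\eta_0}\eta(\rho)$ -- exactly the second branch of the max in \eqref{eq:C_func} -- so no splitting of $\norm{G_\dag e_k}_Y$ is ever needed. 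In substance your variant is harmless, since all downstream uses (condition \eqref{eq:rho2_bar_cond} in \cref{lem:stab-free}) only require $C(\rho)$ to be small for small $\rho$, which your enlarged constant also satisfies; but to prove the lemma with the stated $C(\rho)$ you should swap the grouping as above, which implements the key idea you correctly identified -- routing every measure of the distance of the iterates to $u^\dag$ through the source-condition sequence -- without the lossy triangle-inequality step.
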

\begin{proof}
    Analogous to \eqref{eq:asym-stab-repr}, we see from \eqref{eq:LM-iteration-free}, \eqref{eq:LM-iteration-free-app}, \eqref{eq:z-free}, and the definition of $\hat z_k$ that
    \eqref{eq:stab-repr} is satisfied with 
    \begin{align*}
        & t_k^{(1)} := \alpha_k \left[ \left(\alpha_k I + A_k \right)^{-1} - \left(\alpha_k I + A \right)^{-1} \right](u_k - \hat u_k),\\
        &t_k^{(2)} := \alpha_k \left[ \left(\alpha_k I + A_k \right)^{-1}- \left(\alpha_k I + \hat A_k \right)^{-1} \right] \hat e_k,\\
        & t_k^{(3)} := \left[ \left(\alpha_k I + \hat A_k \right)^{-1} \hat G_k^{*} - \left(\alpha_k I + A_k \right)^{-1} G_k^*\right] \hat z_k,\\
        & t_k^{(4)} := \left(\alpha_k I + A_k \right)^{-1} G_k^{*} (\hat z_k - z_k). 
    \end{align*} 
    We now prove \eqref{eq:t-repr} and \eqref{eq:h-esti}. To verify these relations, we use \cref{lem:transform} to obtain
    \begin{equation*}
        t_k^{(1)} = \alpha_k \left(\alpha_k I + A \right)^{-1} G_\dag^* R_{\alpha_k}(u_k, u^\dag) (u_k - \hat u_k) 
    \end{equation*}
    and thus $h_k^{(1)} := \alpha_k R_{\alpha_k}(u_k, u^\dag) (u_k - \hat u_k)$ verifies \eqref{eq:t-repr} for $i=1$. The estimate \eqref{eq:RS-esti} then implies that
    \begin{equation}
        \label{eq:h1_esti}
        \norm{h_k^{(1)}}_Y \leq \kappa(\rho)\norm{u_k - \hat u_k}_U \alpha_{k}^{ 1/2}.
    \end{equation}
    Furthermore, we have
    \begin{equation*}
        \begin{aligned}
            t_k^{(2)} & = \alpha_k \left(\alpha_k I + \hat A_k \right)^{-1}\hat G_k^* R_{\alpha_k}(u_k, \hat u_k) \hat e_k \\
            &= \alpha_k \left(\alpha_k I + A \right)^{-1} G_\dag^* \left[I + S_{\alpha_k}(\hat u_k, u^\dag)\right] R_{\alpha_k}(u_k, \hat u_k) \hat e_k,
        \end{aligned}
    \end{equation*}
    and so \eqref{eq:t-repr} is valid for $i=2$ with
    \begin{equation*}
        h_k^{(2)} := \alpha_k \left[I + S_{\alpha_k}(\hat u_k, u^\dag)\right] R_{\alpha_k}(u_k, \hat u_k) \hat e_k.
    \end{equation*} 
    This and \eqref{eq:RS-esti} yield
    \begin{equation} \label{eq:h2_esti}
        \begin{aligned}
            \norm{h_k^{(2)}}_Y &\leq (1+ 3\kappa(\rho)) \kappa(\rho) \alpha_k^{1/2} \norm{\hat e_k}_U \\
            & \leq 2c_0 (1+ 3\kappa_0) \kappa(\rho) \norm{w}_U \alpha_{k}^{\nu + 1/2},
        \end{aligned}
    \end{equation}
    where we have used \cref{lem:error-source-free} and the monotonic growth of $\kappa$ to obtain the last estimate. Noting that $u_k, \hat u_k \in \overline{B}(u^\dag, \rho)$, according to \cref{lem:well-posedness-free}, we have 
    \begin{equation*}
        t_k^{(3)} = \left(\alpha_k I + A \right)^{-1} G_\dag^* \left[I + S_{\alpha_k}(u_k, u^\dag)\right] S_{\alpha_k}(\hat u_k, u_k) \hat z_k
    \end{equation*} 
    and therefore $h_k^{(3)} := \left[I + S_{\alpha_k}(u_k, u^\dag)\right] S_{\alpha_k}(\hat u_k, u_k) \hat z_k$. The estimate \eqref{eq:RS-esti} then yields
    \begin{equation*}
        \norm{h_k^{(3)}}_Y \leq 3 (1+ 3\kappa_0) \kappa(\rho) \norm{\hat z_k}_Y.
    \end{equation*} 
    On the other hand, as a result of \eqref{eq:GTCC} and \cref{lem:error-source-free}, we have
    \begin{equation*}
        \begin{aligned}
            \norm{\hat z_k}_Y & \leq \frac{\eta(\rho)}{1-\eta_0} \norm{G_\dag \hat e_k}_Y \leq 2c_0^2 \frac{\eta(\rho)}{1-\eta_0} \norm{w}_U \alpha_{k}^{\nu + 1/2}.
        \end{aligned}
    \end{equation*}
    The two estimates above show that $h_k^{(3)}$ satisfies 
    \begin{equation}
        \label{eq:h3_esti}
        \norm{h_k^{(3)}}_Y  \leq 6c_0^2 (1+ 3\kappa_0) \kappa(\rho)\frac{\eta(\rho)}{1-\eta_0} \norm{w}_U \alpha_{k}^{\nu + 1/2}.
    \end{equation}    
    Finally,
    \begin{equation*}
        \begin{aligned}
            t_k^{(4)} &= \left(\alpha_k I + A \right)^{-1} G_\dag^* \left[I + S_{\alpha_k}(u_k, u^\dag) \right] (\hat z_k - z_k) \\
            & = \left(\alpha_k I + A \right)^{-1} G_\dag^*h_k^{(4)} 
        \end{aligned}
    \end{equation*} 
    with $h_k^{(4)} := \left[I + S_{\alpha_k}(u_k, u^\dag) \right](\hat z_k - z_k)$. From this and \eqref{eq:RS-esti}, we obtain
    \begin{equation}
        \label{eq:h4-1}
        \norm{h_k^{(4)}}_Y \leq \left(1 + 3 \kappa_0\right)\norm{\hat z_k - z_k}_Y.
    \end{equation}
    From the definitions of $z_k$ and $\hat z_k$, it follows that 
    \begin{equation*}
        \begin{aligned}
            \norm{\hat z_k - z_k}_Y & = \norm{F(\hat u_k) - F(u_k) - G_k(\hat u_k - u_k) + (G_k - \hat G_k)\hat e_k }_Y \\
            & \leq \norm{F(\hat u_k) - F(u_k) - G_k(\hat u_k - u_k) }_Y + \norm{ (G_k - \hat G_k)\hat e_k }_Y \\
            & \leq \frac{\eta(\rho)}{1-\eta_0} \norm{G_k(\hat u_k - u_k)}_Y + \norm{ (G_k - \hat G_k)\hat e_k }_Y.
        \end{aligned}
    \end{equation*}
    Here we used \eqref{eq:GTCC2}. Combining this with \eqref{eq:Q-trans}, \eqref{eq:Q-identity}, and \eqref{eq:Q_upper_bound}, we obtain
    \begin{equation*}
        \begin{aligned}
            \norm{\hat z_k - z_k}_Y & \leq \frac{\eta(\rho)}{1-\eta_0} (1+ \kappa_0) \norm{G_\dag(\hat u_k - u_k)}_Y + 2\kappa(\rho)(1+ \kappa_0) \norm{G_\dag \hat e_k}_Y \\
            & \leq (1+ \kappa_0) \left[ \frac{\eta(\rho)}{1-\eta_0}  \norm{G_\dag(\hat u_k - u_k)}_Y + 4c_0^2 \kappa(\rho)\norm{w}_U \alpha_{k}^{\nu + 1/2} \right],
        \end{aligned}
    \end{equation*}
    where we have used \eqref{eq:error-free-source} to get the last inequality. This together with \eqref{eq:h4-1} shows that
    \begin{equation}
        \label{eq:h4_esti}
        \norm{h_k^{(4)}}_Y \leq \left(1 + 3 \kappa_0\right)(1+ \kappa_0) \left[ \frac{\eta(\rho)}{1-\eta_0}  \norm{G_\dag(\hat u_k - u_k)}_Y + 4c_0^2 \kappa(\rho)\norm{w}_U \alpha_{k}^{\nu + 1/2} \right].
    \end{equation}
    Summing up from \eqref{eq:h1_esti}--\eqref{eq:h3_esti} to \eqref{eq:h4_esti} yields     
    \eqref{eq:h-esti}.
\end{proof}

\begin{lemma} \label{lem:stab-free}
    Let all assumptions in \cref{lem:well-posedness-free} be fulfilled and let $\bar\rho_1, \nu$ be defined as in \cref{lem:error-source-free}. Assume that there exists a constant $\bar\rho_2 \in (0,\bar\rho_1]$ satisfying
    \begin{equation}
        \label{eq:rho2_bar_cond}
        C(\bar\rho_2) \leq \min \left\{\frac{c_0}{2c_2(2+c_0)}, \frac{1}{K_0(r,\nu) + 2 K_1(r,\nu)}  \right\},
    \end{equation}
    where $C(\rho)$ is defined by \eqref{eq:C_func}. Let $\rho \in (0, \bar\rho_2]$ be arbitrary.
    Assume in addition that $u_0, \hat u_0 \in U$ are such that $\min\{2 \norm{\hat e}_ U,2 \norm{\hat e_0}_ U \} < \rho$ and $\hat e_0 = A^\nu w$ for some $w \in U$. Then there hold
    \begin{align}
        \label{eq:stab-free}
        \norm{u_n - \hat u_n}_U &\leq 2 \norm{u_0 - \hat u_0}_U +  \pi_1(\rho) \norm{w}_U \alpha_n^\nu
        \shortintertext{and}
        \label{eq:resi-stab-free}
        \norm{G_\dag(u_n - \hat u_n)}_U &\leq c_0 \alpha_n^{1/2} \norm{u_0 - \hat u_0}_U +  \pi_2(\rho) \norm{w}_U \alpha_n^{\nu +1/2} 
    \end{align}
    for all $n \geq 0$. Here
    \begin{equation*}
        \pi_1(\rho) := C(\rho)K_0(r,\nu), \quad \pi_2(\rho) := 2 C(\rho)K_1(r,\nu).
    \end{equation*}
\end{lemma}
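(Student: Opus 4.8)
The plan is to prove the two estimates \eqref{eq:stab-free} and \eqref{eq:resi-stab-free} \emph{simultaneously} by induction on $n$, following closely the scheme already used in \cref{lem:well-posedness,lem:error-source-free}. For $n=0$ the first bound is trivial since $\pi_1(\rho)\geq 0$, and the second follows from the scaling condition \eqref{eq:scaled} (which gives $\norm{G_\dag(u_0-\hat u_0)}_Y\leq \alpha_0^{1/2}\norm{u_0-\hat u_0}_U$) together with $c_0=1/\sqrt r>1$. So I would fix $0\leq l$, assume both bounds for all $0\leq n\leq l$, and establish them for $n=l+1$.

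The first step is to telescope the one-step identity \eqref{eq:stab-repr}--\eqref{eq:t-repr} of \cref{lem:stab-repr}, which, exactly as in the derivation of \eqref{eq:error-plus1-free}--\eqref{eq:resi-plus1-free}, yields
\begin{equation*}
u_{l+1}-\hat u_{l+1} = \prod_{m=0}^{l}\alpha_m(\alpha_m I + A)^{-1}(u_0-\hat u_0) + \sum_{m=0}^{l}\alpha_m^{-1}\prod_{j=m}^{l}\alpha_j(\alpha_j I + A)^{-1}G_\dag^*\sum_{i=1}^4 h_m^{(i)},
\end{equation*}
together with the corresponding identity for $G_\dag(u_{l+1}-\hat u_{l+1})$ obtained via $G_\dag(\alpha I+A)^{-1}=(\alpha I+B)^{-1}G_\dag$. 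Applying \cref{lem:spectral-half,lem:spectral-nu} then bounds the two homogeneous terms by $\norm{u_0-\hat u_0}_U$ and $\frac{1}{2}(\sum_{j=0}^{l}\alpha_j^{-1})^{-1/2}\norm{u_0-\hat u_0}_U$, respectively, and the two inhomogeneous sums by $\frac{1}{2}\sum_{m=0}^{l}\alpha_m^{-1}(\sum_{j=m}^{l}\alpha_j^{-1})^{-1/2}\sum_{i}\norm{h_m^{(i)}}_Y$ and $\sum_{m=0}^{l}\alpha_m^{-1}(\sum_{j=m}^{l}\alpha_j^{-1})^{-1}\sum_{i}\norm{h_m^{(i)}}_Y$.

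Next I would insert the key estimate \eqref{eq:h-esti} and substitute the induction hypothesis for $\norm{u_m-\hat u_m}_U$ and $\norm{G_\dag(u_m-\hat u_m)}_Y$; this regroups $\sum_i\norm{h_m^{(i)}}_Y$ into one part $C(\rho)(2+c_0)\norm{u_0-\hat u_0}_U\,\alpha_m^{1/2}$ and one part $C(\rho)\bigl(1+\pi_1(\rho)+\pi_2(\rho)\bigr)\norm{w}_U\,\alpha_m^{\nu+1/2}$. The resulting geometric-type sums are then evaluated with \cref{lem:sum-esti} (producing the constants $c_1,c_2$ and the factor $\alpha_{l+1}^{1/2}$) and with \cref{lem:sum-nu} (producing $K_0(r,\nu),K_1(r,\nu)$ and the factors $\alpha_{l+1}^{\nu}$ and $\alpha_{l+1}^{\nu+1/2}$).

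The induction closes precisely when the accumulated constants do not exceed the claimed $2$, $c_0$, $\pi_1(\rho)$, $\pi_2(\rho)$, and this is where the smallness condition \eqref{eq:rho2_bar_cond} enters (via monotonicity of $\kappa,\eta$, hence of $C$, to pass from $\rho$ to $\bar\rho_2$). The threshold $C(\bar\rho_2)\leq c_0/(2c_2(2+c_0))$ absorbs the $\norm{u_0-\hat u_0}_U$ contributions; here the algebraic identity $c_2=c_0c_1$ is what makes the binding requirement $\frac{1}{2}c_0+C(2+c_0)c_2\leq c_0$ of the $G_\dag$-estimate equivalent to $C\leq c_0/(2c_2(2+c_0))$ and automatically stronger than the requirement $\frac{1}{2}C(2+c_0)c_1\leq 1$ of the $U$-estimate. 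The second threshold $C(\bar\rho_2)\leq 1/(K_0(r,\nu)+2K_1(r,\nu))$ guarantees $\pi_1(\rho)+\pi_2(\rho)=C(\rho)(K_0+2K_1)\leq 1$, so that the self-reproducing factor $1+\pi_1+\pi_2\leq 2$ returns exactly $\pi_1=CK_0$ and $\pi_2=2CK_1$. I expect the main obstacle to be the bookkeeping rather than any analytic subtlety: the two estimates are coupled through the induction hypothesis (the $G_\dag$ bound feeds the $U$ bound and conversely), so both must be propagated together, and one must verify that the two smallness thresholds imposed on $C$ are exactly the self-consistency conditions for the constants to reproduce themselves.
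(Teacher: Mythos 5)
Your proposal is correct and follows essentially the same route as the paper's proof: a simultaneous induction on both estimates, telescoping the one-step identity of \cref{lem:stab-repr}, bounding via \cref{lem:spectral-half,lem:spectral-nu} and \eqref{eq:h-esti}, evaluating the sums with \cref{lem:sum-esti,lem:sum-nu}, and closing the induction with precisely the two thresholds in \eqref{eq:rho2_bar_cond}. Even your observation that the $G_\dag$-estimate's requirement dominates the $U$-estimate's (via $c_2=c_0c_1$) is the paper's own step, which it phrases as the inequality $c_0/(2c_2)\leq 2/c_1$.
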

\begin{proof}
    We show \eqref{eq:stab-free} and \eqref{eq:resi-stab-free} by induction on $n$. Easily, these estimates hold for $n=0$. Assume that \eqref{eq:stab-free} and \eqref{eq:resi-stab-free} are satisfied for all $ 0 \leq n \leq l$. We shall prove these estimates also hold for $n=l+1$. To that purpose, we apply \cref{lem:stab-repr} to obtain
    \begin{equation}
        \label{eq:stab-free-plus1}
        u_{l+1} - \hat u_{l+1} = \prod_{m=0}^{l} \alpha_m \left(\alpha_m I + A \right)^{-1} (u_0 - \hat u_0) + \sum_{m=0}^{l} \alpha_m^{-1} \prod_{j=m}^{l} \alpha_j \left(\alpha_j I + A \right)^{-1} G_\dag^* \sum_{i=1}^4 h_m^{(i)} 
    \end{equation}
    and
    \begin{equation}
        \label{eq:resi-free-plus1}
        G_\dag \left(u_{l+1} - \hat u_{l+1} \right) = \prod_{m=0}^{l} \alpha_m \left(\alpha_m I + B \right)^{-1} G_\dag (u_0 - \hat u_0) + \sum_{m=0}^{l} \alpha_m^{-1} \prod_{j=m}^{l} \alpha_j \left(\alpha_j I + B\right)^{-1} B \sum_{i=1}^4 h_m^{(i)}.
    \end{equation}
    Applying \cref{lem:spectral-nu} and \cref{lem:spectral-half} to \eqref{eq:stab-free-plus1} and using \cref{lem:stab-repr}, we obtain 
    \begin{equation*}
        \begin{aligned}
            \norm{u_{l+1} - \hat u_{l+1}}_U 
            & \leq \norm{u_0 - \hat u_0}_U + \frac{1}{2} \sum_{m=0}^{l} \alpha_m^{-1} \left(\sum_{j=m}^l \alpha_j^{-1} \right)^{-1/2} \sum_{i=1}^{4} \norm{h_m^{(i)} }_Y \\ 
            & \leq \norm{u_0 - \hat u_0}_U + \frac{1}{2} C(\rho)  \sum_{m=0}^{l} \alpha_m^{-1} \left(\sum_{j=m}^l \alpha_j^{-1} \right)^{-1/2} \\
            \MoveEqLeft[-1] \cdot\left[ \alpha_m^{\nu +1/2} \norm{w}_U + \alpha_m^{1/2} \norm{u_{m} - \hat u_{m}}_U + \norm{G_\dag (u_m - \hat u_m)}_Y \right],
        \end{aligned} 
    \end{equation*}
    which together with the induction hypothesis as well as \cref{lem:sum-esti,lem:sum-nu} shows that
    \begin{equation*}
        \begin{aligned}
            \norm{u_{l+1} - \hat u_{l+1}}_U & \leq \norm{u_0 - \hat u_0}_U + \frac{1}{2} C(\rho) (2+c_0) \norm{u_0 - \hat u_0}_U \sum_{m=0}^{l} \alpha_m^{-1/2}\left(\sum_{j=m}^l \alpha_j^{-1} \right)^{-1/2} \\
            \MoveEqLeft[-1] + \frac{1}{2} C(\rho) (1 + \pi_1(\rho) + \pi_2(\rho) ) \norm{w}_U \sum_{m=0}^{l} \alpha_m^{\nu-1/2} \left(\sum_{j=m}^l \alpha_j^{-1} \right)^{-1/2} \\
            & \leq \norm{u_0 - \hat u_0}_U \left[ 1 + \frac{1}{2} C(\rho) (2+c_0)c_1 \right] + \frac{1}{2} C(\rho) K_0(r,\nu) (1 + \pi_1(\rho) + \pi_2(\rho) ) \norm{w}_U \alpha_{l+1}^\nu. 
        \end{aligned} 
    \end{equation*}
    Thanks to \eqref{eq:rho2_bar_cond}, the definition of $C(\rho)$, the fact that $c_0/(2c_2) \leq 2/c_1$, and the monotonic growth of $\kappa, \eta$, we obtain
    \begin{equation*}
        \norm{u_{l+1} - \hat u_{l+1}}_U \leq 2 \norm{u_0 - \hat u_0}_U + C(\rho) K_0(r,\nu) \norm{w}_U \alpha_{l+1}^\nu.
    \end{equation*}
    This verifies \eqref{eq:stab-free} for $n=l+1$. It remains to prove \eqref{eq:resi-stab-free} for $n =l+1$. To this end, using similar argument as above, we obtain from \eqref{eq:resi-free-plus1}, \eqref{eq:h-esti}, \cref{lem:spectral-half,lem:spectral-nu} that
    \begin{equation*}
        \begin{aligned}
            \norm{G_\dag(u_{l+1} - \hat u_{l+1})}_Y 
            & \leq \frac{1}{2} \left(\sum_{j=0}^l \alpha_j^{-1} \right)^{-1/2} \norm{u_0 - \hat u_0}_U + \sum_{m=0}^{l} \alpha_m^{-1} \left(\sum_{j=m}^l \alpha_j^{-1} \right)^{-1} \sum_{i=1}^{4} \norm{h_m^{(i)} }_Y \\ 
            & \leq \frac{1}{2} c_0 \alpha_{l+1}^{1/2} \norm{u_0 - \hat u_0}_U + C(\rho) \sum_{m=0}^{l} \alpha_m^{-1} \left(\sum_{j=m}^l \alpha_j^{-1} \right)^{-1} \\
            \MoveEqLeft[-1] \cdot \left[ \alpha_m^{\nu +1/2} \norm{w}_U 
            + \alpha_m^{1/2} \norm{u_{m} - \hat u_{m}}_U + \norm{G_\dag (u_m - \hat u_m)}_Y \right].
        \end{aligned}
    \end{equation*}
    The induction hypothesis as well as \cref{lem:sum-esti,lem:sum-nu} then imply that
    \begin{equation*}
        \begin{aligned}
            \norm{G_\dag(u_{l+1} - \hat u_{l+1})}_Y 
            & \leq \frac{1}{2} c_0 \alpha_{l+1}^{1/2}\norm{u_0 - \hat u_0}_U + C(\rho) (2+c_0) \norm{u_0 - \hat u_0}_U \sum_{m=0}^{l} \alpha_m^{-1/2}\left(\sum_{j=m}^l \alpha_j^{-1} \right)^{-1} \\
            \MoveEqLeft[-1] + C(\rho) (1 + \pi_1(\rho) + \pi_2(\rho) ) \norm{w}_U \sum_{m=0}^{l} \alpha_m^{\nu-1/2} \left(\sum_{j=m}^l \alpha_j^{-1} \right)^{-1} \\
            & \leq \norm{u_0 - \hat u_0}_U\alpha_{l+1}^{1/2} \left[ \frac{1}{2}c_0 + C(\rho) (2+c_0)c_2 \right] \\
            \MoveEqLeft[-1] + C(\rho) K_1(r,\nu) (1 + \pi_1(\rho) + \pi_2(\rho) ) \norm{w}_U \alpha_{l+1}^{\nu +1/2} \\
            & \leq c_0\norm{u_0 - \hat u_0}_U\alpha_{l+1}^{1/2} + 2C(\rho)K_1(r,\nu)\norm{w}_U \alpha_{l+1}^{\nu +1/2}, 
        \end{aligned}
    \end{equation*}
    where the last inequality follows from \eqref{eq:rho2_bar_cond}, the definition of $C(\rho)$, and the monotonic growth of $\kappa, \eta$. We thus obtain the desired conclusion. 
\end{proof}

The following corollary is a direct consequence of \cref{lem:error-source-free,lem:stab-free}.
\begin{corollary} \label{cor:stab-free} 
    Under the assumptions of \cref{lem:stab-free}, there hold
    \begin{align}
        \label{eq:error-free}
        \norm{e_n}_U &\leq 2 \norm{u_0 - \hat u_0}_U + \left(2c_0 + \pi_1(\rho) \right) \alpha_n^\nu \norm{w}_U 
        \shortintertext{and}
        \label{eq:resi-free}
        \norm{G_\dag e_n}_Y &\leq c_0 \alpha_n^{1/2} \norm{u_0 - \hat u_0}_U + \left(2c_0^2 + \pi_2(\rho)\right) \alpha_n^{\nu + 1/2} \norm{w}_U
    \end{align}
    for all $n \geq 0$.
\end{corollary}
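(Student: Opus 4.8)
The plan is to obtain both estimates by a triangle-inequality splitting that decouples the error $e_n$ into a stability term $u_n - \hat u_n$ and a source-condition term $\hat e_n$, and then to invoke the two preceding lemmas separately on each piece. Since the assumptions of \cref{lem:stab-free} are in force, both \cref{lem:error-source-free} (applied to the source-condition iterates $\{\hat u_n\}$) and the conclusions \eqref{eq:stab-free}--\eqref{eq:resi-stab-free} of \cref{lem:stab-free} are available, and the corollary will follow by mere addition.

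First I would write $e_n = u_n - u^\dag = (u_n - \hat u_n) + (\hat u_n - u^\dag) = (u_n - \hat u_n) + \hat e_n$, so that the triangle inequality gives $\norm{e_n}_U \leq \norm{u_n - \hat u_n}_U + \norm{\hat e_n}_U$, and applying the bounded linear operator $G_\dag$ yields $\norm{G_\dag e_n}_Y \leq \norm{G_\dag(u_n - \hat u_n)}_Y + \norm{G_\dag \hat e_n}_Y$. Next I would bound the source-condition term: because $\hat e_0 = A^\nu w$ and $2\norm{\hat e_0}_U < \rho \leq \bar\rho_2 \leq \bar\rho_1$, the hypotheses of \cref{lem:error-source-free} are met for the sequence $\{\hat u_n\}$, giving $\norm{\hat e_n}_U \leq 2c_0 \alpha_n^\nu \norm{w}_U$ and $\norm{G_\dag \hat e_n}_Y \leq 2c_0^2 \alpha_n^{\nu+1/2} \norm{w}_U$. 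The stability term is handled directly by \cref{lem:stab-free}, which supplies $\norm{u_n - \hat u_n}_U \leq 2\norm{u_0 - \hat u_0}_U + \pi_1(\rho)\norm{w}_U \alpha_n^\nu$ together with the matching bound $\norm{G_\dag(u_n - \hat u_n)}_Y \leq c_0 \alpha_n^{1/2}\norm{u_0 - \hat u_0}_U + \pi_2(\rho)\norm{w}_U\alpha_n^{\nu+1/2}$.

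Finally I would add the two sets of bounds. Summing the $U$-norm estimates produces the coefficient $2c_0 + \pi_1(\rho)$ on $\alpha_n^\nu \norm{w}_U$ alongside the term $2\norm{u_0 - \hat u_0}_U$, which is exactly \eqref{eq:error-free}; summing the $G_\dag$-estimates produces $2c_0^2 + \pi_2(\rho)$ on $\alpha_n^{\nu+1/2}\norm{w}_U$ together with $c_0 \alpha_n^{1/2}\norm{u_0 - \hat u_0}_U$, which is \eqref{eq:resi-free}. There is no genuine obstacle here: the only point deserving a moment's care is checking that the chain $\rho \leq \bar\rho_2 \leq \bar\rho_1$ (guaranteed by \cref{lem:stab-free}), together with $2\norm{\hat e_0}_U < \rho$ and $\hat e_0 = A^\nu w$, legitimately places $\{\hat u_n\}$ within the scope of \cref{lem:error-source-free}, after which the estimates are purely additive.
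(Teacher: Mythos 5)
Your proof is correct and is exactly the argument the paper intends: the paper states \cref{cor:stab-free} as a direct consequence of \cref{lem:error-source-free,lem:stab-free}, i.e., the triangle-inequality splitting $e_n = (u_n - \hat u_n) + \hat e_n$ followed by adding the two pairs of bounds. Your additional check that $\hat e_0 = A^\nu w$ and $2\norm{\hat e_0}_U < \rho \leq \bar\rho_2 \leq \bar\rho_1$ place $\{\hat u_n\}$ within the scope of \cref{lem:error-source-free} is the same (implicit) verification the paper relies on.
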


In the remainder of this subsection, we show the convergence to $u^\dag$ of the sequence $\{u_n\}$. 
\begin{theorem} \label{thm:convergence-free}
    Let $\{\alpha_n\}$ be defined by \eqref{eq:Lag-para} and \eqref{eq:scaled}. Assume that \cref{ass:gtcc} holds and that there exists a constant $\bar\rho_2$ satisfying \eqref{eq:rho2_bar_cond} corresponding to $\nu = \frac{1}{4}$. 
    Let $\rho \in(0,\bar\rho_2]$ and $u_0 \in U$ satisfy $4 \norm{e_0}_U < \rho$ and $e_0 \in \mathcal{N}(G_\dag)^\bot$. Then,
    there holds
    \begin{equation}
        \label{eq:convergence-free}
        \norm{e_n}_U \to 0 \quad \text {and} \quad \frac{\norm{G_\dag e_n}_Y}{\sqrt{ \alpha_n }} \to 0\quad\text{as } n \to \infty.
    \end{equation} 
\end{theorem}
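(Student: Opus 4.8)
The plan is to combine the convergence rates available under the source condition (\cref{lem:error-source-free}) with the stability estimate of \cref{cor:stab-free} through a density argument, which removes the need for the source condition and uses only $e_0 \in \mathcal N(G_\dag)^\bot$.

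First I would record the structural fact that makes the source condition generically approximable. Since $G_\dag$ is compact by \cref{ass:adj_compact}, the operator $A = G_\dag^*G_\dag$ is compact, self-adjoint, and positive semidefinite, with $\mathcal N(A) = \mathcal N(G_\dag)$. By the spectral theorem for compact self-adjoint operators, $\overline{\mathcal R(A^\nu)} = \overline{\mathcal R(A)} = \mathcal N(A)^\bot = \mathcal N(G_\dag)^\bot$ for every $\nu > 0$; in particular, for $\nu = \frac14$ the range $\mathcal R(A^{1/4})$ is dense in $\mathcal N(G_\dag)^\bot$. Hence, given any $\epsilon \in (0,\norm{e_0}_U]$, I can pick $w \in U$ with $\norm{e_0 - A^{1/4}w}_U < \epsilon$ and define the perturbed initial guess $\hat u_0 := u^\dag + A^{1/4}w$, so that $\hat e_0 = A^{1/4}w$ satisfies the source condition and $\norm{u_0 - \hat u_0}_U = \norm{e_0 - \hat e_0}_U < \epsilon$.

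Next I would verify that the hypotheses of \cref{lem:stab-free}, and hence of \cref{cor:stab-free}, hold for this pair $(u_0,\hat u_0)$. This is exactly where the assumption $4\norm{e_0}_U < \rho$ enters: the bound $\epsilon \le \norm{e_0}_U$ gives $\norm{\hat e_0}_U \le \norm{e_0}_U + \epsilon \le 2\norm{e_0}_U$, whence $2\norm{\hat e_0}_U \le 4\norm{e_0}_U < \rho$ and also $2\norm{e_0}_U < \rho$, so the neighborhood conditions are met. \cref{cor:stab-free} then yields, for all $n \ge 0$,
\begin{align*}
    \norm{e_n}_U &\le 2\norm{u_0 - \hat u_0}_U + (2c_0 + \pi_1(\rho))\,\alpha_n^{1/4}\norm{w}_U, \\
    \frac{\norm{G_\dag e_n}_Y}{\sqrt{\alpha_n}} &\le c_0\norm{u_0 - \hat u_0}_U + (2c_0^2 + \pi_2(\rho))\,\alpha_n^{1/4}\norm{w}_U,
\end{align*}
where the second line is obtained by dividing \eqref{eq:resi-free} by $\alpha_n^{1/2}$ and using $\nu = \frac14$.

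Finally I would pass to the limit. Since $\alpha_n = \alpha_0 r^n \to 0$ and $\nu = \frac14 > 0$, the terms carrying the (fixed, though possibly large) factor $\norm{w}_U$ vanish as $n \to \infty$, so $\limsup_{n\to\infty}\norm{e_n}_U \le 2\epsilon$ and $\limsup_{n\to\infty}\norm{G_\dag e_n}_Y/\sqrt{\alpha_n} \le c_0\epsilon$. Letting $\epsilon \downarrow 0$ then gives \eqref{eq:convergence-free}. The only genuinely delicate points are the density claim for $\mathcal R(A^{1/4})$ and the bookkeeping ensuring the perturbed guess $\hat u_0$ remains within the admissible radius; these are settled respectively by the compactness of $G_\dag$ and by the factor-$4$ slack built into the hypothesis, so no estimates beyond those already established are required.
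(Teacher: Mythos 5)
Your proof is correct and follows essentially the same route as the paper's own: approximate $e_0$ in $\mathcal{N}(G_\dag)^\bot$ by $\hat e_0 = A^{1/4}w$ via density, check the radius conditions, apply \cref{cor:stab-free} with $\nu=\tfrac14$, and let $n \to \infty$ followed by $\epsilon \to 0$. One small caveat: your density argument invokes the compactness of $G_\dag$ from \cref{ass:adj_compact}, which the theorem does not assume and which is unnecessary, since $\mathcal{N}(G_\dag)^\bot = \overline{\mathcal{R}(G_\dag^*)} = \overline{\mathcal{R}(A^{1/2})} \subset \overline{\mathcal{R}(A^{1/4})}$ holds for any bounded nonnegative self-adjoint $A$, which is exactly how the paper argues.
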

\begin{proof}
    Let $\epsilon>0$ be such that $4 \epsilon < \rho$. Since $e_0 \in \mathcal{N}(G_\dag)^\bot$ and 
    \begin{equation*}
        \mathcal{N}(G_\dag)^\bot = \overline{\mathcal{R}(G_\dag^*)} = \overline{\mathcal{R}(A^{1/2})} \subset \overline{\mathcal{R}(A^{1/4})},
    \end{equation*} 
    there exists an element $\hat u \in U$ such that 
    $\norm{\hat u_0 - u_0} < \epsilon$ and
    $\hat u_0 - u^\dag = A^{1/4}w$ for some $w \in U$. Obviously, $2 \norm{\hat e_0}_U < \rho$ with $\hat e_0 := \hat u_0 - u^\dag$. Applying \cref{cor:stab-free} to the case $\nu = 1/4$ leads to  the estimates
    \begin{align}
        \label{eq:error-free-fourth}
        \norm{e_n}_U &\leq 2 \norm{u_0 - \hat u_0}_U + \left(2c_0 + \pi_1(\rho)\right) \alpha_n^{1/4} \norm{w}_U 
        \shortintertext{and}
        \label{eq:resi-free-fourth}
        \norm{G_\dag e_n}_Y &\leq c_0 \alpha_n^{1/2} \norm{u_0 - \hat u_0}_U + \left(2c_0^2 + \pi_2(\rho)\right) \alpha_n^{3/4} \norm{w}_U
    \end{align}
    are satisfied.
    Since $\alpha_n \to 0$ as $n \to \infty$, there exists a number $n_0 := n_0(\epsilon, \norm{w}_U)$ such that
    \begin{equation*}
        \left(2c_0 + \pi_1(\rho) \right) \alpha_n^{1/4} \norm{w}_U \leq \epsilon \quad\text{for all } n\geq n_0.
    \end{equation*}
    This and \eqref{eq:error-free-fourth} give 
    $\norm{e_n}_U \leq 3 \epsilon$ for all  $n\geq n_0$.
    The first limit in \eqref{eq:convergence-free} then follows. The second limit in \eqref{eq:convergence-free} is similarly obtained from \eqref{eq:resi-free-fourth}. 
\end{proof}

\subsection{Asymptotic stability estimates} \label{sec:asym-stab}

This subsection provides some estimates on $\norm{u_n^\delta - u_n}_U$ and $\norm{G_\dag(u_n^\delta - u_n)}_Y$ with $0 \leq n \leq \tilde{N}_\delta$ that are crucial to prove the regularization property of the modified Levenberg--Marquardt method.
\begin{proposition} \label{prop:asym-stab}
    Let all assumptions in \cref{lem:stab-free} hold true. Assume furthermore that a positive constant $\rho_3 \leq \min\{ \rho_2, \bar\rho_2\}$ exists such that
    \begin{equation}
        \label{eq:rho3_cond}
        \left\{
            \begin{aligned}
                & T_1(\rho_3) + T_2(\rho_3) \leq 2+c_0,\\
                & \pi_1(\rho_3) + \pi_2(\rho_3) + T_1(\rho_3) + T_2(\rho_3) \leq 2c_0(1+c_0),\\
                & L_1(\rho_3)(3+c_3+T_3(\rho_3)) \leq 1,\\
                & T_3(\rho_3) \leq 3 + c_3
            \end{aligned}
        \right.
    \end{equation}
    with 
    \begin{equation*}
        T_1(\rho) := (2c_0 + 2c_0^2)K_0(r,\nu)L_1(\rho), \quad 
        T_2(\rho) := 4 c_0(1+c_0) K_1(r,\nu)L_1(\rho), \quad    T_3(\rho) := 2c_4(3+ c_3)L_1(\rho),
    \end{equation*}
    $L_1(\rho)$ defined as in \cref{lem:asym-stab}, and $\pi_1(\rho)$ and $\pi_2(\rho)$ given in \cref{lem:stab-free}. Let $\rho \leq \rho_3$ and $u_0, \hat u_0 \in U$ be such that $\min\{2 \norm{e_0}_ U,2 \norm{\hat e_0}_ U \} < \rho$ and $\hat e_0 = A^\nu w$ for some $w \in U$. Then there hold
    \begin{align} \label{eq:asym-stab-esti}
        \norm{u_n^\delta - u_n}_U &\leq T_1(\rho) \left(\norm{u_0 - \hat u_0}_U+ \alpha_n^\nu \norm{w}_U \right) + c_3 \frac{\delta}{\sqrt{\alpha_n}} 
        \shortintertext{and}
        \label{eq:resi-asym-stab-esti}
        \norm{G_\dag(u_n^\delta - u_n)}_Y &\leq T_2(\rho) \left(\norm{u_0 - \hat u_0}_U \alpha_n^{1/2} + \alpha_n^{\nu +1/2} \norm{w}_U \right) + (2 + T_3(\rho)) \delta.
    \end{align}   
\end{proposition}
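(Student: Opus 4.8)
The plan is to establish both \eqref{eq:asym-stab-esti} and \eqref{eq:resi-asym-stab-esti} simultaneously by induction on $n$ for $0 \le n \le \tilde N_\delta$. Since $u_0^\delta = u_0$, the left-hand sides vanish at $n=0$ while the right-hand sides are non-negative, so the base case holds. For the inductive step I would assume both estimates for all $0 \le m \le l$ with $l < \tilde N_\delta$ and derive them for $n = l+1$. By \cref{lem:well-posedness,lem:well-posedness-free} the iterates $u_m$, $\hat u_m$, and $u_m^\delta$ all lie in $\overline B_U(u^\dag,\rho)$, so that \cref{cor:stab-free,lem:asym-auxi} apply throughout the range.

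The crux is to control the quantity $\sigma_m$ from \cref{lem:asym-auxi}. I would bound its four summands separately: the terms $\alpha_m^{1/2}\norm{e_m}_U$ and $\norm{G_\dag e_m}_Y$ via \cref{cor:stab-free}, and the terms $\alpha_m^{1/2}\norm{u_m^\delta - u_m}_U$ and $\norm{G_\dag(u_m^\delta - u_m)}_Y$ via the induction hypothesis. Adding the four bounds, the coefficient of $\alpha_m^{1/2}\norm{u_0 - \hat u_0}_U$ is $2 + c_0 + T_1(\rho) + T_2(\rho)$ and that of $\alpha_m^{\nu+1/2}\norm{w}_U$ is $2c_0(1+c_0) + \pi_1(\rho) + \pi_2(\rho) + T_1(\rho) + T_2(\rho)$ (using $2c_0 + 2c_0^2 = 2c_0(1+c_0)$); the first two inequalities in \eqref{eq:rho3_cond} collapse these to $2(2+c_0)$ and $4c_0(1+c_0)$, respectively, giving
\begin{equation*}
    \sigma_m \le 2(2+c_0)\,\alpha_m^{1/2}\norm{u_0 - \hat u_0}_U + 4c_0(1+c_0)\,\alpha_m^{\nu + 1/2}\norm{w}_U + \bigl(2 + c_3 + T_3(\rho)\bigr)\delta.
\end{equation*}

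Inserting this into \eqref{eq:asymp-esti-auxi} and \eqref{eq:G-esti-auxi}, I would split each sum into three pieces matching the three terms above and evaluate them with \cref{lem:sum-esti,lem:sum-nu}. The $\norm{w}_U$-pieces yield exactly $T_1(\rho)\alpha_{l+1}^\nu\norm{w}_U$ and $T_2(\rho)\alpha_{l+1}^{\nu+1/2}\norm{w}_U$ through the constants $K_0(r,\nu)$ and $K_1(r,\nu)$ appearing in the definitions of $T_1, T_2$. The $\norm{u_0 - \hat u_0}_U$-pieces carry the factors $c_1$ and $c_2$ from \cref{lem:sum-esti} and are absorbed into the same $T_1(\rho)$, $T_2(\rho)$, because the resulting products $(2+c_0)c_1$ and $(2+c_0)c_2$ are dominated, respectively, by $2c_0(1+c_0)K_0(r,\nu)$ and $2c_0(1+c_0)K_1(r,\nu)$. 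Finally, combining the $\delta$-pieces of the sums with the explicit $\delta$-terms already present in \eqref{eq:asymp-esti-auxi} and \eqref{eq:G-esti-auxi} produces the brackets $1 + L_1(\rho)(3 + c_3 + T_3(\rho))$ and $2 + c_4 L_1(\rho)(3 + c_3 + T_3(\rho))$; these are bounded by $2$ and by $2 + T_3(\rho)$ exactly by the third and fourth inequalities in \eqref{eq:rho3_cond} (together with the definition $T_3(\rho) = 2c_4(3+c_3)L_1(\rho)$), yielding the $\delta$-coefficients $c_3$ and $2 + T_3(\rho)$ demanded by \eqref{eq:asym-stab-esti} and \eqref{eq:resi-asym-stab-esti}. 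This closes the induction.

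The main obstacle is the self-referential structure of the recursion: $\sigma_m$ contains precisely the two quantities being estimated, so the induction feeds $T_1, T_2, T_3$ back into itself, and one must verify that the four smallness conditions in \eqref{eq:rho3_cond} are calibrated so that this feedback is absorbed rather than amplified. The delicate bookkeeping lies in handling the four distinct geometric-type sums (with numerator exponents $-1/2$ and $-1$ against partial sums raised to $-1/2$ and $-1$) and checking that every coefficient collapses to its prescribed target, the tight cancellations in the $\delta$-terms being the most demanding.
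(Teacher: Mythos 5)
Your proposal is correct and takes essentially the same route as the paper's proof: induction on $0 \le n \le \tilde N_\delta$, bounding $\sigma_m$ via \cref{cor:stab-free} and the induction hypothesis, inserting into \eqref{eq:asymp-esti-auxi} and \eqref{eq:G-esti-auxi}, evaluating the sums with \cref{lem:sum-esti,lem:sum-nu}, and closing with the four conditions of \eqref{eq:rho3_cond} (plus the final triangle inequality hidden in your $\delta$-coefficient $2 + c_4L_1(\rho)(3+c_3+T_3(\rho))$). The only cosmetic difference is that you invoke the first two smallness conditions before summing whereas the paper carries the full coefficients through the sums; your domination claims are valid since $c_1 \le c_2 = K_1(r,0) \le K_1(r,\nu) = c_0K_0(r,\nu)$ and $2+c_0 \le 2(1+c_0)$.
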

\begin{proof}
    We show \eqref{eq:asym-stab-esti} and \eqref{eq:resi-asym-stab-esti} by induction on $0 \leq n \leq \tilde{N}_\delta$. It is easy to see that these estimates are valid for $n=0$. Now for any fixed $0 \leq l < \tilde{N}_\delta$ we assume that \eqref{eq:asym-stab-esti} and \eqref{eq:resi-asym-stab-esti} are fulfilled for all $0 \leq n \leq l$ and show that these estimates also hold true for $n=l+1$. To this end, using \eqref{eq:asymp-esti-auxi}, the induction hypothesis, and \cref{cor:stab-free}, we estimate 
    \begin{multline*}
        \norm{u^\delta_{l+1} - u_{l+1}}_U \\
        \begin{aligned}
            & \leq \frac{L_1(\rho) }{2} \sum_{m=0}^{l} \alpha_m^{-1} \left(\sum_{j=m}^l \alpha_j^{-1} \right)^{-1/2} \norm{u_0 - \hat u_0}_U \alpha_m^{1/2} \left(2 + c_0 + T_1(\rho) + T_2(\rho) \right) \\
            \MoveEqLeft[-1] +\frac{L_1(\rho) }{2} \sum_{m=0}^{l} \alpha_m^{-1} \left(\sum_{j=m}^l \alpha_j^{-1} \right)^{-1/2} \norm{w}_U \alpha_m^{\nu+1/2} \left(2c_0 + 2c_0^2 + \pi_1(\rho) + \pi_2(\rho) +T_1(\rho) + T_2(\rho) \right) \\
            \MoveEqLeft[-1] +\frac{L_1(\rho) }{2} \sum_{m=0}^{l} \alpha_m^{-1} \left(\sum_{j=m}^l \alpha_j^{-1} \right)^{-1/2} \delta (2 + c_3 + T_3(\rho)) + \frac{1}{2} c_3 \left(1+ L_1(\rho)\right) \frac{\delta}{\sqrt{\alpha_{l+1}}}, 
        \end{aligned}
    \end{multline*}
    which together with \cref{lem:sum-esti,lem:sum-nu} leads to
    \begin{equation*}
        \begin{aligned}
            \norm{u^\delta_{l+1} - u_{l+1}}_U & \leq \frac{1}{2} L_1(\rho) \norm{u_0 - \hat u_0}_U\left(2 + c_0 + T_1(\rho) + T_2(\rho) \right)c_1 \\
            \MoveEqLeft[-1] + \frac{1}{2} L_1(\rho) \norm{w}_U \left(2c_0 + 2c_0^2 + \pi_1(\rho) + \pi_2(\rho) +T_1(\rho) + T_2(\rho) \right) K_0(r,\nu) \alpha_{l+1}^\nu \\
            \MoveEqLeft[-1] + \frac{1}{2} L_1(\rho) (2 + c_3 + T_3(\rho)) c_3 \delta \alpha_{l+1}^{-1/2} + \frac{1}{2} c_3 \left(1+ L_1(\rho)\right) \frac{\delta}{\sqrt{\alpha_{l+1}}},
        \end{aligned}
    \end{equation*} 
    or, equivalently,
    \begin{equation*}
        \begin{aligned}
            \norm{u^\delta_{l+1} - u_{l+1}}_U 
            & \leq \frac{1}{2} c_3 \left[1+ L_1(\rho) (3 + c_3 + T_3(\rho)) \right] \frac{\delta}{\sqrt{\alpha_{l+1}}} \\
            \MoveEqLeft[-1] + \frac{1}{2} L_1(\rho) \norm{u_0 - \hat u_0}_U\left(2 + c_0 + T_1(\rho) + T_2(\rho) \right)c_1 \\
            \MoveEqLeft[-1] + \frac{1}{2} L_1(\rho) \norm{w}_U \left(2c_0 + 2c_0^2 + \pi_1(\rho) + \pi_2(\rho) +T_1(\rho) + T_2(\rho) \right) K_0(r,\nu) \alpha_{l+1}^\nu.
        \end{aligned}
    \end{equation*}
    From this and \eqref{eq:rho3_cond}, the monotonic growth of $\kappa$ and $\eta$ gives 
    \begin{equation*}
        \norm{u^\delta_{l+1} - u_{l+1}}_U \leq T_1(\rho) \left(\norm{u_0 - \hat u_0}_U +\alpha_{l+1}^\nu \norm{w}_U \right) + c_3 \frac{\delta}{\sqrt{\alpha_{l+1}}}.
    \end{equation*}
    The estimate \eqref{eq:asym-stab-esti} is thus verified for $n=l+1$. Similarly, from \eqref{eq:G-esti-auxi}, the induction hypothesis, and \cref{cor:stab-free}, we obtain
    \begin{multline*}
        \norm{G_\dag(u^\delta_{l+1} - u_{l+1})- y^\delta + y^\dag}_Y\\
        \begin{aligned}
            & \leq L_1(\rho) \sum_{m=0}^{l} \alpha_m^{-1} \left(\sum_{j=m}^l \alpha_j^{-1} \right)^{-1} \norm{u_0 - \hat u_0}_U \alpha_m^{1/2} \left(2 + c_0 + T_1(\rho) + T_2(\rho) \right) \\
            \MoveEqLeft[-1] +L_1(\rho) \sum_{m=0}^{l} \alpha_m^{-1} \left(\sum_{j=m}^l \alpha_j^{-1} \right)^{-1} \norm{w}_U \alpha_m^{\nu+1/2} \left(2c_0 + 2c_0^2 + \pi_1(\rho) + \pi_2(\rho) +T_1(\rho) + T_2(\rho) \right) \\
            \MoveEqLeft[-1] +L_1(\rho) \sum_{m=0}^{l} \alpha_m^{-1} \left(\sum_{j=m}^l \alpha_j^{-1} \right)^{-1} \delta (2 + c_3 + T_3(\rho)) + \delta \left(1+ c_4L_1(\rho) \right), 
        \end{aligned}
    \end{multline*}
    which together with \cref{lem:sum-esti,lem:sum-nu} leads to
    \begin{multline*}
        \norm{G_\dag(u^\delta_{l+1} - u_{l+1})- y^\delta + y^\dag}_Y \\
        \begin{aligned}
            &\leq \delta \left[ 1 + L_1(\rho) c_4(3 + c_3 + T_3(\rho)) \right] + L_1(\rho) \norm{u_0 - \hat u_0}_U\left(2 + c_0 + T_1(\rho) + T_3(\rho) \right)c_2 \alpha_{l+1}^{1/2} \\
            \MoveEqLeft[-1] + L_1(\rho) \norm{w}_U \left(2c_0 + 2c_0^2 + \pi_1(\rho) + \pi_2(\rho) +T_1(\rho) + T_2(\rho) \right) K_1(r,\nu) \alpha_{l+1}^{\nu +1/2}.
        \end{aligned}
    \end{multline*} 
    By virtue of \eqref{eq:rho3_cond} and the monotonicity of $\kappa$ and $\eta$, we have that
    \begin{equation}
        \label{eq:resi-asym-esti}
        \begin{aligned}[t]
            \norm{G_\dag(u^\delta_{l+1} - u_{l+1})- y^\delta + y^\dag}_Y& \leq L_1(\rho) \norm{u_0 - \hat u_0}_U \left(4 + 2c_0 \right)c_2 \alpha_{l+1}^{1/2} \\
            \MoveEqLeft[-1] + L_1(\rho) \norm{w}_U \left(4c_0 + 4c_0^2 \right) K_1(r,\nu) \alpha_{l+1}^{\nu +1/2} \\
            \MoveEqLeft[-1] + \delta \left[ 1 + 2L_1(\rho) c_4(3 + c_3) \right] \\
            & \leq T_2(\rho) \left(\norm{u_0 - \hat u_0}_U \alpha_{l+1}^{1/2} + \norm{w}_U \alpha_{l+1}^{\nu +1/2} \right) + \delta (1 + T_3(\rho)).
        \end{aligned}
    \end{equation} 
    Noting that $L_1(\rho)(4 + 2c_0)c_2 \leq L_1(\rho)(4c_0 + 4c_0^2) K_1(r,\nu) =: T_2(\rho)$, the estimate \eqref{eq:resi-asym-stab-esti} is therefore satisfied for $n= l+1$. 
\end{proof}

As a result of \eqref{eq:resi-asym-esti} and \cref{prop:asym-stab}, we have the following corollary, whose proof is similar to that of \eqref{eq:F-esti2}.
\begin{corollary} \label{cor:resi-asym}
    Let all assumptions of \cref{prop:asym-stab} be satisfied. Then there holds
    \begin{equation*}
        \norm{F(u^\delta_{n}) -F(u_n) - y^\delta + y^\dag}_Y \leq T_4(\rho) \left(\norm{u_0 - \hat u_0}_U \alpha_{n}^{1/2} + \norm{w}_U \alpha_{n}^{\nu +1/2} \right) + \delta (1 + T_5(\rho))
    \end{equation*}
    for all $0 \leq n \leq \tilde{N}_\delta$, where
    \begin{equation*}
        T_4(\rho):= T_2(\rho) \left(1 + \kappa(\rho) + \frac{1+\kappa_0}{1-\eta_0} \eta(\rho)\right) , \quad T_5(\rho) := T_3(\rho) + \left( \kappa(\rho) + \frac{1+\kappa_0}{1-\eta_0} \eta(\rho)\right)(2 + T_3(\rho)) 
    \end{equation*}
    and  $T_2(\rho)$ and $T_3(\rho)$ are defined as in \cref{prop:asym-stab}.
\end{corollary}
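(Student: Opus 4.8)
The plan is to follow verbatim the argument already used for \eqref{eq:F-esti2}, which reduces the claim to the two residual estimates supplied by \cref{prop:asym-stab}. First I would insert the exact data $y^\dag$ and the linearization $G_\dag(u^\delta_n - u_n)$, and split by the triangle inequality:
\begin{equation*}
    \begin{aligned}
        \norm{F(u^\delta_n) - F(u_n) - y^\delta + y^\dag}_Y
        & \leq \norm{G_\dag(u^\delta_n - u_n) - y^\delta + y^\dag}_Y \\
        \MoveEqLeft[-1] + \norm{F(u^\delta_n) - F(u_n) - G_n(u^\delta_n - u_n)}_Y + \norm{(G_n - G_\dag)(u^\delta_n - u_n)}_Y.
    \end{aligned}
\end{equation*}
The middle summand is controlled by \eqref{eq:GTCC2} combined with the transport identity $G_n = Q(u_n,u^\dag)G_\dag$ from \eqref{eq:Q-trans} and the norm bound \eqref{eq:Q_upper_bound}, while the last summand is controlled by writing $G_n - G_\dag = (Q(u_n,u^\dag) - I)G_\dag$ and invoking \eqref{eq:Q-identity}. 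Exactly as in the proof of \eqref{eq:F-esti2}, this produces
\begin{equation*}
    \norm{F(u^\delta_n) - F(u_n) - y^\delta + y^\dag}_Y
    \leq \norm{G_\dag(u^\delta_n - u_n) - y^\delta + y^\dag}_Y
    + \left[ \kappa(\rho) + \frac{1+\kappa_0}{1-\eta_0}\eta(\rho) \right]\norm{G_\dag(u^\delta_n - u_n)}_Y.
\end{equation*}

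Next I would feed in the two estimates established inside \cref{prop:asym-stab}: the compensated bound \eqref{eq:resi-asym-esti} for the first summand and the conclusion \eqref{eq:resi-asym-stab-esti} for the factor $\norm{G_\dag(u^\delta_n - u_n)}_Y$. Both hold on the entire range $0 \leq n \leq \tilde{N}_\delta$, since \cref{prop:asym-stab} is proved by induction over precisely that range. Collecting the coefficient of the source term $\norm{u_0 - \hat u_0}_U \alpha_n^{1/2} + \norm{w}_U \alpha_n^{\nu + 1/2}$ gives $T_2(\rho)\bigl(1 + \kappa(\rho) + \tfrac{1+\kappa_0}{1-\eta_0}\eta(\rho)\bigr) = T_4(\rho)$, and collecting the coefficient of $\delta$ gives $(1 + T_3(\rho)) + \bigl(\kappa(\rho) + \tfrac{1+\kappa_0}{1-\eta_0}\eta(\rho)\bigr)(2 + T_3(\rho)) = 1 + T_5(\rho)$, which is exactly the asserted inequality.

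There is no genuine obstacle here; the statement is a bookkeeping consequence of \cref{prop:asym-stab}, and the reference to the proof of \eqref{eq:F-esti2} is what makes it routine. The only points requiring mild care are to use the \emph{tighter} compensated estimate \eqref{eq:resi-asym-esti} (whose $\delta$-coefficient is $1 + T_3(\rho)$) for the first summand rather than re-applying the triangle inequality to $\norm{G_\dag(u^\delta_n - u_n)}_Y$ (which would inflate the $\delta$-term), and to check that the constants reassemble precisely into $T_4(\rho)$ and $T_5(\rho)$ as defined in the statement — that is, that the multiplicative factor $1 + \kappa(\rho) + \tfrac{1+\kappa_0}{1-\eta_0}\eta(\rho)$ applied to $T_2(\rho)$ and the affine combination yielding $T_5(\rho)$ match the claimed constants.
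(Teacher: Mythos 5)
Your proposal is correct and follows precisely the route the paper intends: the paper gives no separate proof, stating only that it is ``similar to that of \eqref{eq:F-esti2}'', and your triangle-inequality decomposition with \eqref{eq:GTCC2}, \eqref{eq:Q-trans}, \eqref{eq:Q_upper_bound}, and \eqref{eq:Q-identity}, followed by inserting the compensated bound \eqref{eq:resi-asym-esti} for the first summand and \eqref{eq:resi-asym-stab-esti} for the remaining factor, is exactly that argument, with the constants reassembling into $T_4(\rho)$ and $1+T_5(\rho)$ as you verify. The only cosmetic remark is that \eqref{eq:resi-asym-esti} is derived inside the induction for indices $n=l+1\geq 1$, but for $n=0$ the claim is trivial since $u_0^\delta = u_0$, so your assertion that everything holds on $0 \leq n \leq \tilde{N}_\delta$ stands.
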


\subsection{Regularization property} \label{sec:regularization}

This subsection is concerned with the convergence of the sequence $\{u_{N_\delta}^\delta\}$ as $\delta \to 0$, provided that $e_0 \in \mathcal{N}(G_\dag)^\bot$ and that $u_0$ is sufficiently close to $u^\dag$. Let $\{\delta_k\}$ be a positive zero sequence. To simplify the notation, from now on we write $N_k := N_{\delta_k}$. The next lemma will be used to show the convergence of subsequences of $\{u_{N_k}^{\delta_k} \}$ for the case where $\{ N_k\}$ is bounded.
\begin{lemma} \label{lem:converge-finite}
    Assume that all assumptions of \cref{lem:well-posedness} are satisfied. Let $\overline N \in \N$ be arbitrary but fixed and let $\{\delta_k \}$ be a positive zero sequence such that $\tilde{N}_{\delta_k} \geq \overline N$ for all $k \geq 1$.  Assume in addition that \cref{ass:adj_compact} holds.
    Then for any subsequence of $\{\delta_k \}$ there exist a subsequence $\{ \delta_{k_i} \}$ and elements $\tilde{u}_j \in \overline B_U(u^\dag, \rho)$ for $0 \leq j \leq \overline N$ such that
    \begin{equation}
        \label{eq:converge-finite}
        u_{j}^{\delta_{k_i}} \to \tilde{u}_j \quad \text {as} \quad i \to \infty
    \end{equation} 
    for all $0 \leq j \leq \overline N$.
\end{lemma}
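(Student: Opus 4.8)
The plan is to exploit the compactness of $G_\dag$ from \cref{ass:adj_compact} in order to compensate for the missing continuity of the map $u \mapsto G_u$, which is exactly what obstructs a direct passage to the limit in the iteration \eqref{eq:LM-iteration}. The crucial first step is to rewrite each iteration step so that the only operator depending on the noisy iterate is \emph{uniformly bounded}, while all the smoothing is carried by a \emph{fixed} compact operator. Applying the identity \eqref{eq:trans2} of \cref{lem:transform} with $u=u_n^\delta$ and $v=u^\dag$ yields $(\alpha_n I + A_n^\delta)^{-1}G_n^{\delta*} = (\alpha_n I + A)^{-1}G_\dag^*\left[I + S_{\alpha_n}(u_n^\delta,u^\dag)\right]$, so that \eqref{eq:LM-iteration} becomes
\begin{equation*}
    u_{n+1}^\delta = u_n^\delta + \left(\alpha_n I + A\right)^{-1} G_\dag^* v_n^\delta, \qquad v_n^\delta := \left[I + S_{\alpha_n}(u_n^\delta,u^\dag)\right]\left(y^\delta - F(u_n^\delta)\right).
\end{equation*}
Telescoping this relation and using $u_0^\delta = u_0$ gives $u_j^\delta = u_0 + \sum_{m=0}^{j-1}\left(\alpha_m I + A\right)^{-1} G_\dag^* v_m^\delta$ for all $0 \leq j \leq \overline N$.

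Next I would establish that, for each fixed $m \in \{0,\ldots,\overline N - 1\}$, the factors $\{v_m^{\delta_k}\}_k$ are bounded in $Y$ uniformly in $k$. Since $\tilde N_{\delta_k} \geq \overline N$, \cref{lem:well-posedness} applies for all $0 \leq m \leq \overline N$ and gives $u_m^{\delta_k} \in \overline B_U(u^\dag,\rho)$ together with $\norm{G_\dag e_m^{\delta_k}}_Y \leq (c_0 + 2c_2\gamma_0)\norm{e_0}_U \alpha_m^{1/2}$. Estimating $\norm{y^{\delta_k} - F(u_m^{\delta_k})}_Y \leq \delta_k + \norm{F(u_m^{\delta_k}) - y^\dag}_Y$, applying \eqref{eq:GTCC3} with $\hat u = u_m^{\delta_k}$ and $u = u^\dag$ (so that $\norm{F(u_m^{\delta_k}) - y^\dag}_Y \leq \tfrac{1}{1-\eta_0}\norm{G_\dag e_m^{\delta_k}}_Y$), and using $\norm{S_{\alpha_m}(u_m^{\delta_k},u^\dag)}_{\Linop(Y)} \leq 3\kappa(\rho)$ from \eqref{eq:RS-esti}, one obtains a bound on $\norm{v_m^{\delta_k}}_Y$ involving only $\sup_k \delta_k < \infty$, $\alpha_m \leq \alpha_0$, and fixed constants. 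Hence $\{v_m^{\delta_k}\}_k$ is bounded for each such $m$.

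The conclusion then follows from compactness. Because $G_\dag$ is compact, so is $G_\dag^*$, and therefore $(\alpha_m I + A)^{-1} G_\dag^*$ is compact for every $m$; it thus maps the bounded sequence $\{v_m^{\delta_k}\}_k$ to a precompact subset of $U$. Starting from the given arbitrary subsequence, I would extract nested subsequences one index at a time: first one along which $(\alpha_0 I + A)^{-1}G_\dag^* v_0^{\delta_k}$ converges strongly, then a further one for the $m=1$ term, and so on up to $m = \overline N - 1$. This is a \emph{finite} procedure since $\overline N$ is fixed, so it terminates and produces a single subsequence $\{\delta_{k_i}\}$ along which every summand converges strongly in $U$. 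Consequently each partial sum, and hence each $u_j^{\delta_{k_i}}$ with $0 \leq j \leq \overline N$, converges strongly to some $\tilde u_j$, and $\tilde u_j \in \overline B_U(u^\dag,\rho)$ because the ball is (strongly) closed. The only genuine obstacle is the non-continuity of $u \mapsto G_u$; isolating the fixed compact operator $(\alpha_m I + A)^{-1}G_\dag^*$ via \cref{lem:transform} and then invoking \cref{ass:adj_compact} is precisely the device that replaces the continuity argument available in the smooth setting.
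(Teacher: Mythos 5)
Your proof is correct and follows essentially the same route as the paper: both use \eqref{eq:trans2} from \cref{lem:transform} to isolate the fixed compact operator $(\alpha_m I + A)^{-1}G_\dag^*$ (compact by \cref{ass:adj_compact}), bound the remaining factor uniformly via \cref{lem:well-posedness}, \eqref{eq:GTCC3}, and \eqref{eq:RS-esti}, and extract strongly convergent subsequences for the finitely many step indices. The only structural difference is cosmetic: the paper argues by induction on $j$, splitting each increment into a term that vanishes by the continuity of $F$ and a compact image of a bounded sequence, whereas you telescope the iteration and apply compactness directly to the whole bounded factor $v_m^{\delta_k}$ — a slight streamlining that makes the continuity of $F$ unnecessary at this point.
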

\begin{proof}
    We shall show by induction on $j$ the existence of a subsequence $\{ \delta_{k_i} \}$ and elements $\tilde{u}_j \in \overline B_U(u^\dag, \rho)$ for $0 \leq j \leq \overline N$ that satisfy \eqref{eq:converge-finite}. 

    First, \eqref{eq:converge-finite} holds for $j=0$ with $\tilde{u}_0 := u_0$. By a slight abuse of notation, we assume $\{\delta_{k_i}\}$ itself is a subsequence satisfying $u_{j}^{\delta_{k_i}} \to \tilde{u}_j$ as $i \to \infty$ for some $\tilde{u}_j \in \overline{B}_U(u^\dag , \rho)$ and some $0 \leq j < \overline N$. 
    To simplify the notation, we write 
    \begin{equation*}
        u_j^{(i)} := u_{j}^{\delta_{k_i}},\qquad u_{j+1}^{(i)} := u_{j+1}^{\delta_{k_i}},\qquad A_j^{(i)} := A_{j}^{\delta_{k_i}},\qquad \text{and} \quad  G_j^{(i)} := G_{j}^{\delta_{k_i}}.
    \end{equation*}
    It follows from \eqref{eq:LM-iteration} and \cref{lem:transform} that 
    \begin{equation*}
        \begin{aligned}
            u_{j+1}^{(i)} &= u_{j}^{(i)} + \left(\alpha_{j} I + A_{j}^{(i)} \right)^{-1} G_j^{(i)*}\left(y^{\delta_{k_i}} - F(u_j^{(i)}) \right) \\
            & =u_{j}^{(i)} + \left(\alpha_{j} I + A \right)^{-1} G_\dag^{*}\left[ I + S_{\alpha_j}\left(u_{j}^{(i)}, u^\dag \right) \right] \left(y^{\delta_{k_i}} - F(u_j^{(i)}) \right) \\
            & =u_{j}^{(i)} + \left(\alpha_{j} I + A \right)^{-1} G_\dag^{*}\left[ I + S_{\alpha_j}\left(u_{j}^{(i)}, u^\dag \right) \right] \left(y^{\dag} - F(\tilde{u}_j) \right) \\
            \MoveEqLeft[-1] + \left(\alpha_{j} I + A \right)^{-1} G_\dag^{*}\left[ I + S_{\alpha_j}\left(u_{j}^{(i)}, u^\dag \right) \right] \left(y^{\delta_{k_i}} - F(u_j^{(i)}) - y^\dag +F(\tilde{u}_j) \right)
        \end{aligned}
    \end{equation*}
    and thus
    \begin{equation}
        \label{eq:u-plus-finite}
        u_{j+1}^{(i)} = u_{j}^{(i)} + a_i + b_i,
    \end{equation}
    where
    \begin{align*}
        & a_i := \left(\alpha_{j} I + A \right)^{-1} G_\dag^{*}h_i,\\
        & b_i := \left(\alpha_{j} I + A \right)^{-1} G_\dag^{*}\left[ I + S_{\alpha_j}\left(u_{j}^{(i)}, u^\dag \right) \right] \left(y^{\delta_{k_i}} - F(u_j^{(i)}) - y^\dag +F(\tilde{u}_j) \right)
    \end{align*}
    with
    \begin{equation*}
        h_i:= \left[ I + S_{\alpha_j}\left(u_{j}^{(i)}, u^\dag \right) \right] \left(y^{\dag} - F(\tilde{u}_j) \right).
    \end{equation*}
    Applying \cref{lem:spectral-half} with $m=l$ and using \eqref{eq:RS-esti} gives
    \begin{equation*}
        \begin{aligned}
            \norm{b_i}_U & \leq \frac{1}{2 \sqrt{\alpha_j}} \left(1 + 3 \kappa(\rho) \right) \norm{y^{\delta_{k_i}} - F(u_j^{(i)}) - y^\dag +F(\tilde{u}_j) }_Y.
        \end{aligned}
    \end{equation*}
    Letting $i \to \infty$ and employing the continuity of $F$ yields 
    \begin{equation}
        \label{eq:bk-lim}
        b_i \to 0 \quad \text {as} \quad i \to \infty.
    \end{equation}
    Furthermore, \eqref{eq:RS-esti} ensures the boundedness of sequence $\{h_i\}$ in $Y$. Moreover, as a result of \cref{ass:adj_compact}, the operator $\left(\alpha_{j} I + A \right)^{-1} G_\dag^{*}$ is compact. This implies that $\{a_i\}$ is compact in $U$. There thus exist a subsequence of $\{ a_i\}$, denoted by the same symbol, and an element $a \in U$ such that
    \begin{equation}
        \label{eq:a-lim}
        a_i \to a \quad \text {as} \quad i \to \infty.
    \end{equation}
    From \eqref{eq:u-plus-finite}, \eqref{eq:bk-lim}, \eqref{eq:a-lim}, and the induction hypothesis, we deduce
    $u_{j+1}^{(i)} \to \tilde{u}_j + a =: \tilde{u}_{j+1}$.
    Consequently, \eqref{eq:converge-finite} holds for $j+1$.
    From \cref{lem:well-posedness}, we have $ u_{j+1}^{(i)} \in \overline{B}(u^\dag,\rho)$ for all $i \geq 0$ and so $\tilde u_{j+1} \in \overline{B}(u^\dag,\rho)$. The proof is complete.
\end{proof}

Before representing our main theorem, we give a result on the \emph{asymptotic stability} of the modified Levenberg--Marquardt method. The definition of this notion in the following proposition generalizes that in \cite[Def.~2.1]{ClasonNhu2018}.
\begin{proposition} \label{prop:AS}
    Let \cref{ass:gtcc,ass:adj_compact} be fulfilled. Assume that there exists a constant $\rho_3$ satisfying \eqref{eq:rho3_cond} in \cref{prop:asym-stab} corresponding to $\nu = \frac{1}{4}$. Let $\rho \in (0, \rho_3]$ and $u_0 \in U$ satisfy $e_0 \in \mathcal{N}(G_\dag)^\bot$ and  $2(2+c_1\gamma_0)\norm{e_0}_U < \rho$. Then the modified Levenberg--Marquardt method \eqref{eq:LM-iteration}--\eqref{eq:disc2} is \emph{asymptotically stable} in the following sense: For any subsequence of a positive zero sequence $\{\delta_k\}$, there exist a subsequence $\{\delta_{k_i}\}$ and elements $\tilde u_n \in \overline B_U(u^\dag , \rho)$ for all $0 \leq n \leq \overline N:= \lim\limits_{i \to \infty} N_{k_i}$ (where the last inequality is strict if $\overline N = \infty$) such that 
    \begin{equation} \label{eq:AS}
        \lim\limits_{n \to \overline N} \left(\limsup\limits_{i \to \infty} \| u_n^{\delta_{k_i}} - \tilde{u}_n \|_U \right) = 0
    \end{equation}
    and 
    \begin{equation}
        \label{eq:AS-additional}
        \tilde{u}_n \to u^* \quad \text {as} \quad n \to \overline N
    \end{equation}
    for some $u^* \in S_\rho(u^\dag)$.
\end{proposition}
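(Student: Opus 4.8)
The plan is to prove \eqref{eq:AS} and \eqref{eq:AS-additional} by splitting according to the (extended) limit of the stopping indices. Given an arbitrary subsequence of $\{\delta_k\}$, I would first pass to a further subsequence (not relabeled) along which the integer-valued $N_{k_i}$ converge to some $\overline N \in \N \cup \{\infty\}$; this is always possible for a sequence of extended naturals. The two regimes $\overline N < \infty$ and $\overline N = \infty$ are then handled separately, and throughout I use that $N_\delta \leq \tilde N_\delta$, as established in the proof of \cref{lem:stopping-log-esti}.

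In the bounded case $\overline N < \infty$, the $N_{k_i}$ are integers converging to a finite limit, hence eventually equal to $\overline N$; after discarding finitely many indices we have $\tilde N_{\delta_{k_i}} \geq N_{k_i} = \overline N$. \cref{lem:converge-finite} (which uses \cref{ass:adj_compact}) then furnishes a further subsequence and elements $\tilde u_n \in \overline B_U(u^\dag,\rho)$, $0 \leq n \leq \overline N$, with $u_n^{\delta_{k_i}} \to \tilde u_n$. Here the outer limit in \eqref{eq:AS} is evaluated at $n = \overline N$ and the inner $\limsup$ there vanishes since $u_{\overline N}^{\delta_{k_i}} \to \tilde u_{\overline N}$, giving \eqref{eq:AS}; moreover \eqref{eq:AS-additional} reduces to showing $u^* := \tilde u_{\overline N} \in S_\rho(u^\dag)$. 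For this I would pass to the limit in the discrepancy principle $\norm{y^{\delta_{k_i}} - F(u_{\overline N}^{\delta_{k_i}})}_Y \leq \tau \delta_{k_i}$: since $\delta_{k_i} \to 0$, $y^{\delta_{k_i}} \to y^\dag$, $u_{\overline N}^{\delta_{k_i}} \to \tilde u_{\overline N}$, and $F$ is continuous on $\overline B_U(u^\dag,\rho)$ by \cref{rem:gtcc}, the right-hand side tends to $0$ and the left-hand side to $\norm{y^\dag - F(\tilde u_{\overline N})}_Y$, whence $F(\tilde u_{\overline N}) = y^\dag$; together with $\tilde u_{\overline N} \in \overline B_U(u^\dag,\rho)$ this yields $u^* \in S_\rho(u^\dag)$.

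In the unbounded case $\overline N = \infty$, I would take $\tilde u_n := u_n$, the noise-free iterates from $u_0$, which lie in $\overline B_U(u^\dag,\rho)$ by \cref{lem:well-posedness-free} and satisfy $u_n \to u^\dag =: u^* \in S_\rho(u^\dag)$ by \cref{thm:convergence-free} (applied with $\nu = \frac{1}{4}$, using $e_0 \in \mathcal N(G_\dag)^\bot$), which is \eqref{eq:AS-additional}. To establish \eqref{eq:AS} I fix $\epsilon > 0$ and, exactly as in the proof of \cref{thm:convergence-free}, use $\mathcal N(G_\dag)^\bot \subset \overline{\mathcal R(A^{1/4})}$ to pick $\hat u_0$ with $\norm{u_0 - \hat u_0}_U < \epsilon$ and $\hat e_0 = A^{1/4} w$; since $2(2+c_1\gamma_0)\norm{e_0}_U < \rho$, for $\epsilon$ small the smallness conditions $\min\{2\norm{e_0}_U, 2\norm{\hat e_0}_U\} < \rho$ required by \cref{prop:asym-stab} hold. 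For each fixed $n$ and all $i$ large enough that $n \leq N_{k_i} \leq \tilde N_{\delta_{k_i}}$, \cref{prop:asym-stab} gives
\begin{equation*}
    \norm{u_n^{\delta_{k_i}} - u_n}_U \leq T_1(\rho)\big(\norm{u_0 - \hat u_0}_U + \alpha_n^{1/4}\norm{w}_U\big) + c_3 \frac{\delta_{k_i}}{\sqrt{\alpha_n}}.
\end{equation*}
Taking first $\limsup_{i\to\infty}$ (the last term vanishes as $\delta_{k_i}\to0$ at fixed $n$) and then $\limsup_{n\to\infty}$ (the term $\alpha_n^{1/4}\norm{w}_U$ vanishes as $\alpha_n\to0$ at fixed $w$) yields $\limsup_{n\to\infty}\big(\limsup_{i\to\infty}\norm{u_n^{\delta_{k_i}} - u_n}_U\big) \leq T_1(\rho)\epsilon$. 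Since the left-hand side is independent of $\epsilon$ while $\hat u_0$ (hence $w$) may be chosen for arbitrarily small $\epsilon$, letting $\epsilon \to 0$ and using nonnegativity of the inner $\limsup$ forces the double limit to exist and equal $0$, which is \eqref{eq:AS}.

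The main obstacle is the unbounded case: one must arrange the order of limits so that the auxiliary $\hat u_0$ and its generally large, $\epsilon$-dependent coordinate $w$ enter only through the two separately controllable quantities $\alpha_n^{1/4}\norm{w}_U$ and $\norm{u_0 - \hat u_0}_U$, the former annihilated by sending $n\to\infty$ at fixed $\epsilon$ and the latter by finally sending $\epsilon\to0$. The remaining care is bookkeeping: verifying that the asymptotic stability estimate of \cref{prop:asym-stab} is admissible for each fixed $n$ along the subsequence, which is precisely where $N_{k_i}\leq\tilde N_{\delta_{k_i}}$ together with $N_{k_i}\to\infty$ are used.
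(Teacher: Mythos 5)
Your proof is correct and follows essentially the same route as the paper's: the same dichotomy on the integer sequence $N_{k_i}$ (eventually constant versus tending to infinity), \cref{lem:converge-finite} plus passing to the limit in the discrepancy principle in the finite case, and in the infinite case the choice $\tilde u_n := u_n$, $u^* := u^\dag$ with the density argument $\hat e_0 = A^{1/4}w$ feeding into \cref{prop:asym-stab} and the double-$\limsup$ bound $T_1(\rho)\epsilon$. The only differences are expository: you spell out the limit passage $F(\tilde u_{\overline N}) = y^\dag$ that the paper compresses into one line, and you make explicit the bookkeeping $N_{k_i} \leq \tilde N_{\delta_{k_i}}$ that the paper uses implicitly.
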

\begin{proof}
    Let $\{\delta_k\}$ itself be a subsequence. Since $\{N_k\}$ is a sequence of integers, there exists a subsequence $\{N_{k_i}\}$ such that either it is a constant sequence or it tends to infinity. For the first case where $N_{k_i} = \overline N$ for some integer $\overline N$ and for all $i \geq 0$, \cref{lem:converge-finite} and the discrepancy principle \eqref{eq:disc2} give the conclusion of the proposition. For the second case where $N_{k_i} \to \infty$, we shall show that the elements $\tilde{u}_n := u_n$, $n \geq 0$, any subsequence $\{\delta_{k_i}\}$, and $u^* := u^\dag$ satisfy \eqref{eq:AS} and \eqref{eq:AS-additional}. To this end, we first see that \cref{thm:convergence-free} implies \eqref{eq:AS-additional}. Let $\epsilon >0$ be arbitrary small but fixed such that $2(2+c_1\gamma_0)\epsilon \leq \rho$. Since $e_0 \in \mathcal{N}(G_\dag)^\bot$ and $\mathcal{N}(G_\dag)^\bot = \overline{\mathcal{R}(G_\dag^*)} = \overline{\mathcal{R}(A^{1/2})} \subset \overline{\mathcal{R}(A^{1/4})}$, there is an element $\hat u \in U$ such that $\norm{\hat u_0 - u_0} < \epsilon$ and $\hat e_0 := \hat u_0 - u^\dag = A^{1/4}w$ for some $w \in U$. 
    Obviously, we have $(2+c_1\gamma_0)\norm{\hat e_0}_U < \rho$.
    From this and the choice of $\rho$, we thus can apply \cref{prop:asym-stab} to obtain the estimate
    \begin{equation*} 
        \norm{u_n^{\delta_{k_i}} - u_n}_U \leq T_1(\rho) \left(\norm{u_0 - \hat u_0}_U+ \alpha_n^{1/4} \norm{w}_U \right) + c_3 \frac{\delta_{k_i}}{\sqrt{\alpha_n}} 
    \end{equation*}
    for all $0 \leq n \leq N_{k_i}$ and for all $i \geq 0$. By letting $i \to \infty$ and then $n \to \infty$, we therefore have
    \begin{align*} 
        \limsup\limits_{n \to \infty} \left(\limsup\limits_{i \to \infty} \| u_n^{\delta_{k_i}} - {u}_n \|_U \right) & \leq T_1(\rho) \epsilon.
    \end{align*}
    The limit \eqref{eq:AS} then follows.
\end{proof}

\bigskip

We are now well prepared to derive the main result of the paper, where some lines in the proof follow the ones in \cite{Jin2016}.
\begin{theorem}[regularization property] 
    \label{thm:REG}
    Let $\{\alpha_n\}$ be defined by \eqref{eq:Lag-para} and \eqref{eq:scaled} and let $\{\delta_k \}$ be a positive zero sequence. Assume that \cref{ass:gtcc,ass:adj_compact} hold and that $\tau > \tau_0 > 1$, $\gamma_0 > \frac{2c_0}{(1-\eta_0)(\tau - \tau_0)}$. 
    Assume further that a constant $\rho_3 \leq \rho_0$ exists and satisfies \eqref{eq:rho3_cond} corresponding to $\nu = \frac{1}{4}$ as well as 
    \begin{equation}
        \label{eq:rho_last_cond}
        T_5(\rho_3) < \tau -1
    \end{equation}
    with $T_5(\rho)$ defined as in \cref{cor:resi-asym}.

    Let $\rho \in (0,\rho_3]$ and $u_0 \in U$ satisfy $2(2+c_1\gamma_0) \norm{u_0 - u^\dag}_U < \rho$. Then the method \eqref{eq:LM-iteration}--\eqref{eq:disc2} is well-defined and the integer $N_{\delta_k}$ defined by the discrepancy principle \eqref{eq:disc2} satisfies
    \begin{equation}
        \label{eq:loga-stopping}
        N_{\delta_k} = O\left(1 + |\log(\delta_k)| \right).
    \end{equation}
    Moreover, if $u_0 - u^\dag \in \mathcal{N}(G_\dag)^{\bot}$, then
    \begin{equation} \label{eq:converge_pro}
        u^{\delta_k}_{N_{\delta_k}} \to u^\dag \quad \text {as} \quad k \to \infty.
    \end{equation}
\end{theorem}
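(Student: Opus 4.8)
The plan is to settle the well-posedness and the logarithmic estimate \eqref{eq:loga-stopping} first, and then to obtain the regularization property \eqref{eq:converge_pro} by a subsequence argument resting on \cref{prop:AS}. For the first part I would note that, by the monotonicity of $\kappa$ and $\eta$ and the standing requirement that these can be made small by shrinking $\rho$, a radius $\rho_3 \leq \rho_0$ realizing \eqref{eq:rho3_cond} and \eqref{eq:rho_last_cond} automatically makes the weaker smallness conditions \eqref{eq:rho1_cond}, \eqref{eq:rho2_cond}, \eqref{eq:rho_1bar_cond}, and \eqref{eq:rho2_bar_cond} hold at radius $\rho_3$ too; consequently, for $\rho \leq \rho_3$ and $(2+c_1\gamma_0)\norm{e_0}_U < \rho$ the hypotheses of \cref{lem:stopping-log-esti} are met, so \eqref{eq:disc2} terminates at a finite $N_{\delta_k}$ obeying \eqref{eq:loga-stopping}. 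The iteration is well-defined because the resolvents $(\alpha_n I + A_n^\delta)^{-1}$ exist for every $n$, $\alpha_n>0$ being positive and $A_n^\delta = G_n^{\delta*}G_n^\delta$ positive semidefinite.

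For \eqref{eq:converge_pro} I would invoke the subsequence principle: it suffices to show that every subsequence of $\{u^{\delta_k}_{N_{\delta_k}}\}$ has a further subsequence converging to $u^\dag$. Given such a subsequence, \cref{prop:AS} (applicable since $2(2+c_1\gamma_0)\norm{e_0}_U < \rho$ and $e_0 \in \mathcal{N}(G_\dag)^\bot$) yields a sub-subsequence $\{\delta_{k_i}\}$, limits $\tilde u_n$, and $\overline N := \lim_i N_{k_i} \in \N \cup \{\infty\}$. I would then treat the two cases $\overline N < \infty$ and $\overline N = \infty$ separately, identifying the limit as $u^\dag$ in each.

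In the finite case $N_{k_i} = \overline N$ eventually, so $u^{\delta_{k_i}}_{\overline N} \to \tilde u_{\overline N} \in \overline B_U(u^\dag,\rho)$. Passing to the limit in $\norm{y^{\delta_{k_i}} - F(u^{\delta_{k_i}}_{\overline N})}_Y \leq \tau\delta_{k_i}$ and using the continuity of $F$ from \eqref{eq:GTCC3} gives $F(\tilde u_{\overline N}) = y^\dag = F(u^\dag)$; then \eqref{eq:GTCC} applied to $(\tilde u_{\overline N}, u^\dag)$ forces $G_\dag(\tilde u_{\overline N} - u^\dag) = 0$, i.e.\ $\tilde u_{\overline N} - u^\dag \in \mathcal{N}(G_\dag)$. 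On the other hand, \eqref{eq:Q-trans} gives $G_{u_n^\delta}^* = G_\dag^* Q(u_n^\delta,u^\dag)^*$, so each increment $u^\delta_{n+1} - u^\delta_n = (\alpha_n I + A_n^\delta)^{-1}G_n^{\delta*}(y^\delta - F(u_n^\delta))$ lies in the closed subspace $\mathcal{N}(G_\dag)^\bot = \overline{\mathcal{R}(G_\dag^*)}$, which reduces the self-adjoint operator $A_n^\delta$ and hence its resolvent. With $e_0 \in \mathcal{N}(G_\dag)^\bot$ this yields $u^\delta_n - u^\dag \in \mathcal{N}(G_\dag)^\bot$ for all $n$, and in the limit $\tilde u_{\overline N} - u^\dag \in \mathcal{N}(G_\dag)^\bot$. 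The two memberships give $\tilde u_{\overline N} = u^\dag$, so $u^{\delta_{k_i}}_{N_{k_i}} \to u^\dag$.

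The case $\overline N = \infty$, where $\tilde u_n = u_n$ and $u_n \to u^\dag$ by \cref{thm:convergence-free}, is the main obstacle, since \eqref{eq:AS} only provides the iterated limit $\lim_n \limsup_i \norm{u_n^{\delta_{k_i}} - u_n}_U = 0$ whereas the diagonal index $n = N_{k_i}$ must be controlled. I would introduce the same source-type perturbation $\hat u_0$ as in \cref{prop:AS} (with $\hat e_0 = A^{1/4}w$, $\norm{u_0 - \hat u_0}_U < \epsilon$) and split $\norm{u^{\delta_{k_i}}_{N_{k_i}} - u^\dag}_U \leq \norm{u^{\delta_{k_i}}_{N_{k_i}} - u_{N_{k_i}}}_U + \norm{u_{N_{k_i}} - u^\dag}_U$, the last term vanishing. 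For the first term, \cref{prop:asym-stab} at $n = N_{k_i} \leq \tilde N_{\delta_{k_i}}$ reduces matters to bounding $\delta_{k_i}/\sqrt{\alpha_{N_{k_i}}}$, which is the crux. I would extract this from the discrepancy principle at $N_{k_i}-1$: combining $\tau\delta_{k_i} < \norm{y^{\delta_{k_i}} - F(u^{\delta_{k_i}}_{N_{k_i}-1})}_Y$ with \cref{cor:resi-asym}, the bound $\norm{F(u_{N_{k_i}-1}) - y^\dag}_Y \leq (1-\eta_0)^{-1}\norm{G_\dag e_{N_{k_i}-1}}_Y$ from \eqref{eq:GTCC3}, and the noise-free estimate of \cref{cor:stab-free} yields $\delta_{k_i}(\tau - 1 - T_5(\rho)) < M(\rho)(\norm{u_0 - \hat u_0}_U\, \alpha_{N_{k_i}-1}^{1/2} + \norm{w}_U\, \alpha_{N_{k_i}-1}^{3/4})$ for a constant $M(\rho)$ assembled from $T_4(\rho)$, $c_0$, $\pi_2(\rho)$, and $\eta_0$. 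Crucially, \eqref{eq:rho_last_cond} ensures $\tau - 1 - T_5(\rho) > 0$, so dividing by $\sqrt{\alpha_{N_{k_i}}} = \sqrt{r}\,\sqrt{\alpha_{N_{k_i}-1}}$ and using $\alpha_{N_{k_i}} \to 0$ gives $\limsup_i \delta_{k_i}/\sqrt{\alpha_{N_{k_i}}}$ bounded by a fixed multiple of $\norm{u_0 - \hat u_0}_U$, hence of $\epsilon$. Substituting back into the \cref{prop:asym-stab} estimate and letting $\epsilon \downarrow 0$ gives $u^{\delta_{k_i}}_{N_{k_i}} \to u^\dag$, which closes both cases and, by the subsequence principle, proves \eqref{eq:converge_pro}.
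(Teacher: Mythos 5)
Your proposal is correct and follows essentially the same route as the paper's proof: the same dichotomy between subsequences with constant stopping index and with $N_{k_i}\to\infty$, the same identification $\tilde u_{\overline N}=u^\dag$ via $\mathcal{N}(G_\dag)\cap\mathcal{N}(G_\dag)^\bot=\{0\}$, and the same use of the discrepancy principle at $N_{k_i}-1$ together with \eqref{eq:rho_last_cond} to force $\limsup_i \delta_{k_i}/\sqrt{\alpha_{N_{k_i}}}=0$ before letting $\epsilon\downarrow 0$. Your two small deviations — deducing invariance of $\mathcal{N}(G_\dag)^\bot$ under the resolvent by a reduction argument instead of the paper's identity $(\alpha I+G^*G)^{-1}G^*=G^*(\alpha I+GG^*)^{-1}$, and bounding $\norm{G_\dag e_{N_{k_i}-1}}_Y/\sqrt{\alpha_{N_{k_i}-1}}$ via the explicit estimate of \cref{cor:stab-free} rather than the qualitative limit in \eqref{eq:convergence-free} — are both valid and do not change the architecture of the argument.
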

\begin{proof}
    Under the assumptions, the well-posedness of the method follows from \cref{lem:well-posedness}, while the logarithmic estimate \eqref{eq:loga-stopping} is shown in \cref{lem:stopping-log-esti}. It is therefore sufficient to prove \eqref{eq:converge_pro}. 

    To this end, we first assume that there exists a subsequence $\delta_{k_i}$ such that 
    $N_{k_i} = \overline N$ for all $i \geq 0$.  
    By virtue of \cref{lem:converge-finite}, there exist a subsequence $\{ k_m \}$ of $\{k_i\}$ and elements $\tilde{u}_j \in \overline B_U(u^\dag,\rho)$ with $j =0,1,\ldots, \overline N$ such that
    \begin{equation}
        \label{eq:conver-finite}
        u_j^{\delta_{k_m}} \to \tilde{u}_j \quad \text {as} \quad m \to \infty
    \end{equation} 
    for all $0 \leq j \leq \overline N$. Moreover, from the discrepancy principle \eqref{eq:disc2}, we obtain
    \begin{equation*}
        \norm{ F(u_{\overline N}^{\delta_{k_m}}) - y^{\delta_{k_m}} }_Y \leq \tau \delta_{k_m}.
    \end{equation*}
    Letting $m \to \infty$ and using the continuity of $F$ yields
    \begin{equation*}
        F(\tilde{u}_{\overline N}) = y^{\dag},
    \end{equation*}
    which together with \eqref{eq:conver-finite} yields that 
    \begin{equation}
        \label{eq:converge_finite1}
        u_{\overline N}^{\delta_{k_m}} \to \tilde{u}_{\overline N} \quad \text{as } m \to \infty
    \end{equation}
    with $\tilde{u}_{\overline N} \in S_\rho(u^\dag)$. We now show that $ \tilde{u}_{\overline N} = u^\dag$. According to \eqref{eq:LM-iteration}, it holds for all $0 \leq n \leq \overline N-1$ and $m \geq 0$ that
    \begin{equation*}
        u_{n+1}^{\delta_{k_m}} - u_n^{\delta_{k_m}} = G_n^{\delta_{k_m}*} \left( \alpha_n I + G_n^{\delta_{k_m}}G_n^{\delta_{k_m}*} \right)^{-1}(y^{\delta_{k_m}}-F(u_n^{\delta_{k_m}})) \subset \mathcal{R}\left(G_n^{\delta_{k_m}*}\right)
    \end{equation*}
    Combining this with \eqref{eq:Q-trans}, we have $u_{n+1}^{\delta_{k_m}} - u_n^{\delta_{k_m}} \subset \mathcal{R}(G_\dag^*) \subset \mathcal{N}(G_\dag)^{\bot}$ for all $m \geq 0$ and $0 \leq n \leq \overline N-1$. Consequently, using $u_0 - u^\dag \in \mathcal{N}(G_\dag)^{\bot}$, there holds
    $u_{\overline N}^{\delta_{k_m}} - u^\dag \subset \mathcal{R}(G_\dag^*) \subset \mathcal{N}(G_\dag)^{\bot}$. From this and the limit \eqref{eq:converge_finite1}, we have $\tilde{u}_{\overline N} - u^\dag \in \mathcal{N}(G_\dag)^{\bot}$. On the other hand, as a result of \eqref{eq:GTCC} and the fact that $u^\dag, \tilde{u}_{\overline N} \in S_\rho(u^\dag)$, it holds that
    $\tilde u_{\overline N} - u^\dag \in \mathcal{N}(G_\dag)$. We thus have $\tilde u_{\overline N} - u^\dag \in \mathcal{N}(G_\dag) \cap \mathcal{N}(G_\dag)^{\bot} = \{0\}$. Therefore,  a subsequence-subsequence argument can conclude that
    \begin{equation}
        \label{eq:converge_finite2}
        u_{\overline N}^{\delta_{k_i}} \to u^\dag \quad \text{as } i \to \infty.
    \end{equation}

    We next assume that there exists a subsequence $\delta_{k_i}$ such that $N_{k_i} \to \infty$ as $i \to \infty$. 
    In this case, let $\epsilon >0$ be arbitrary but fixed such that $ 0 < 2(2+c_1 \gamma_0)\epsilon < \rho$. Since $e_0 \in \mathcal{N}(G_\dag)^\bot$ and $\mathcal{N}(G_\dag)^\bot = \overline{\mathcal{R}(G_\dag^*)} = \overline{\mathcal{R}(A^{1/2})} \subset \overline{\mathcal{R}(A^{1/4})}$, there exists an element $\hat u \in U$ such that $\norm{\hat u_0 - u_0} < \epsilon$ and
    $\hat u_0 - u^\dag = A^{1/4}w$ for some $w \in U$. 
    Easily, $2\norm{\hat e_0}_U \leq (2 + c_1 \gamma_0) \norm{\hat e_0}_U < \rho$ with $\hat e_0 := \hat u_0 - u^\dag$. From \cref{prop:asym-stab} and \cref{cor:resi-asym}, we have
    \begin{align} \label{eq:asym-stab-fourth}
        \norm{u_j^{\delta_{k_i}} - u_j}_U &\leq T_1(\rho) \left(\norm{u_0 - \hat u_0}_U+ \alpha_j^{1/4} \norm{w}_U \right) + c_3 \frac{\delta_{k_i}}{\sqrt{\alpha_j}} 
        \intertext{and} 
        \label{eq:resi-asym-stab-fourth}
        \norm{F(u_j^{\delta_{k_i}}) - F(u_j) - y^{\delta_{k_i} } + y^\dag }_Y &\leq T_4(\rho) \left(\norm{u_0 - \hat u_0}_U \alpha_j^{1/2} + \alpha_j^{3/4} \norm{w}_U \right) + (1 + T_5(\rho)) \delta_{k_i}
    \end{align}
    for all $0 \leq j \leq N_{k_i}$ and for all $i \geq 0$. 
    On the other hand, we can conclude from the discrepancy principle \eqref{eq:disc2} and the estimate \eqref{eq:GTCC3} for all $0 \leq j < N_{k_i}$ that
    \begin{align*}
        \tau \delta_{k_i} &< \norm{F(u_j^{\delta_{k_i} }) - y^{\delta_{k_i}} }_Y \\
        & \leq \norm{F(u_j^{\delta_{k_i}}) - F(u_j) - y^{\delta_{k_i} } + y^\dag }_Y + \norm{F(u_j)- y^\dag}_Y \\
        & \leq \norm{ F(u_j^{\delta_{k_i}}) - F(u_j) - y^{\delta_{k_i} } + y^\dag }_Y + \frac{1}{1 - \eta_0} \norm{G_\dag e_j}_Y.
    \end{align*}
    Combining this with \eqref{eq:resi-asym-stab-fourth} yields for all $0 \leq j < N_{k_i}$ that
    \begin{equation*}
        \delta_{k_i} \left(\tau - (1 + T_5(\rho))\right) \leq T_4(\rho) \left(\norm{u_0 - \hat u_0}_U \alpha_j^{1/2} + \alpha_j^{3/4} \norm{w}_U \right) + \frac{1}{1 - \eta_0} \norm{G_\dag e_j}_Y
    \end{equation*}
    and thus
    \begin{equation*}
        \left(\tau - (1 + T_5(\rho)\right) \frac{\delta_{k_i}}{\sqrt{{\alpha_{N_{k_i}}}}} \leq  T_4(\rho) \left(\frac{1}{r^{1/2}} \norm{u_0 - \hat u_0}_U + \frac{1}{r^{3/4}} \alpha_{N_{k_i}}^{1/4} \norm{w}_U \right) + \frac{1}{\sqrt{r}(1 - \eta_0)} \frac{\norm{G_\dag e_{N_{k_i}-1} }_Y}{\sqrt{\alpha_{N_{k_i}-1} } }. 
    \end{equation*}
    Letting $i \to \infty$, employing \eqref{eq:rho_last_cond}, and using the second limit in \eqref{eq:convergence-free} gives
    \begin{equation*}
        \left(\tau - (1 + T_5(\rho))\right)\limsup \limits_{i \to \infty} \frac{\delta_{k_i}}{\sqrt{{\alpha_{N_{k_i}}}}} \leq \frac{T_4(\rho)}{r^{1/2}} \norm{u_0 - \hat u_0}_Y \leq \frac{T_4(\rho)}{r^{1/2}} \epsilon.
    \end{equation*}
    Noting that $N_{k_i} \to \infty$ as $i \to \infty$, this implies that
    \begin{equation*}
        \limsup \limits_{i \to \infty} \frac{\delta_{k_i}}{\sqrt{{\alpha_{N_{k_i}}}}} = 0.
    \end{equation*}
    This and \eqref{eq:asym-stab-fourth} yield
    \begin{align*}
        \limsup \limits_{i \to \infty} \norm{u_{N_{k_i}}^{\delta_{k_i}} - u_{N_{k_i }} }_U &\leq T_1(\rho) \norm{u_0 - \hat u_0}_U \leq T_1(\rho) \epsilon
        \intertext{and hence, since $\epsilon>0$ was arbitrary,}
        \limsup \limits_{i \to \infty} \norm{u_{N_{k_i}}^{\delta_{k_i}} - u_{N_{k_i}} }_U &=0.
    \end{align*}
    Together with \eqref{eq:convergence-free}, this implies that
    \begin{equation*}
        u_{N_{k_i}}^{\delta_{k_i}} \to u^\dag \quad \text{as } i \to \infty.
    \end{equation*}
    From this, \eqref{eq:converge_finite2}, and a subsequence-subsequence argument, we obtain 
    \eqref{eq:converge_pro}.
\end{proof}

\section{Iterative regularization for a non-smooth forward operator} \label{sec:maxpde}

In this section, we study the solution operator to \eqref{eq:maxpde-intro} based on previous results from \cite{Christof2018,ClasonNhu2018}. In particular, we show that this operator together with one of its Bouligand subderivatives satisfies the assumptions in \cref{sec:LMmethod}.

\subsection{Well-posedness and directional differentiability}

Let $\Omega\subset\R^d$, $2 \leq d \leq 3$, be a bounded domain with Lipschitz boundary $\partial\Omega$. For $u\in L^2(\Omega)$, we consider the equation
\begin{equation}\label{eq:maxpde}
    \left\{
        \begin{aligned}
            -\Delta y + y^+ &= u \quad\text{in }\Omega,\\
            y &=0 \quad\text{on }\partial\Omega,
        \end{aligned}
    \right.
\end{equation}
where $y^+(x) := \max(y(x),0)$ for all $x \in \Omega$.
From \cite[Thm.~4.7]{Troltzsch}, we obtain for each $u \in L^2(\Omega)$ a unique weak solution $y_u$ belonging to $H^1_0(\Omega) \cap C(\overline{\Omega})$ and satisfying the a priori estimate
\begin{equation*}
    \norm{y_u}_{H^1_0(\Omega)} + \norm{y_u}_{C(\overline{\Omega})} \leq c_\infty \norm{u}_{L^2(\Omega)}
\end{equation*} 
for some constant $c_\infty>0$ independent of $u$.

Let us denote by $F: L^2(\Omega) \to H^1_0(\Omega) \cap C(\overline{\Omega}) \hookrightarrow L^2(\Omega)$ the solution operator of \eqref{eq:maxpde}. 
As shown in \cite[Prop.~3.1]{ClasonNhu2018} (see also \cite[Prop.~2.1]{Christof2018}), $F$ is Lipschitz continuous as a function from $L^2(\Omega)$ to $H^1_0(\Omega) \cap C(\overline{\Omega})$, that is,
\begin{align}
    \| F(u)- F(v)\|_{H^1_0(\Omega)} + \| F(u)- F(v)\|_{C(\overline\Omega)} &\leq C_F \| u -v \|_{L^2(\Omega)} \label{eq:F-Lip}
\end{align} 
for all $u,v \in L^2(\Omega)$ and for some constant $C_F$. Moreover, $F$ is completely continuous
as a function from $L^2(\Omega)$ to $H^1_0(\Omega)$ and from $L^2(\Omega)$ to itself. However, $F$ is in general not Gâteaux differentiable, but it is Gâteaux differentiable at $u$ if and only if $|\{F(u)=0\}|=0$. 

Similarly to \cite{ClasonNhu2018}, we shall use as a replacement for the Fréchet derivative a Bouligand subderivative of $F$ as the operator $G_u$ in \cref{sec:LMmethod}. We first define the set of \emph{Gâteaux points} of $F$ as
\begin{equation*}
    D := \{ v\in L^2(\Omega) : \text{$F: L^2(\Omega) \to H^1_0(\Omega)$ is G\^ateaux differentiable in $v$}\}.
\end{equation*} 
Denoting the Gâteaux derivative of $F$ at $u\in D$ by $F'(u)\in \Linop(L^2(\Omega),H^1_0(\Omega))$ by $F'(u)$, the (strong-strong) \emph{Bouligand subdifferential} at $u\in L^2(\Omega)$ is then defined as
\begin{align*} 
    \partial_{B} F(u) 
    := \{ & G_u \in \Linop(L^2(\Omega), H^1_0(\Omega)) : 
        \text{there exists } \{u_n\}_{n\in\N} \subset D \text{ such that}\\
        & u_n \to u \text{ in } L^2(\Omega)
    \text{ and } F'(u_n)h \to G_u h \text{ in } H^1_0(\Omega) \text{ for all } h \in L^2(\Omega)\}.
\end{align*}

We have the following convenient characterization of a specific Bouligand subderivative of $F$.
\begin{proposition}[{\cite[Prop.~3.16]{Christof2018}}] \label{prop:Gu}
    Given $u \in L^2(\Omega)$, let $G_u: L^2(\Omega) \to H^1_0(\Omega) \hookrightarrow L^2(\Omega)$ be the solution operator mapping $h\in L^2(\Omega)$ to the unique solution $\zeta\in H^1_0(\Omega)$ of
    \begin{equation} \label{eq:bouligand_pde}
        \left\{
            \begin{aligned}
                - \Delta \zeta + \1_{\{y_u>0\}}\zeta &= h \quad \text {in } \Omega,\\
                \zeta &= 0 \quad \text {on } \partial\Omega,
            \end{aligned}
        \right.
    \end{equation}
    where $y_u:=F(u)$. Then $G_u \in \partial_{B} F(u)$.
\end{proposition}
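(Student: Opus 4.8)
The plan is to exhibit a sequence $\{u_n\} \subset D$ with $u_n \to u$ in $L^2(\Omega)$ and $F'(u_n)h \to G_u h$ in $H^1_0(\Omega)$ for every $h \in L^2(\Omega)$, which is precisely the defining property of membership in $\partial_B F(u)$. Everything hinges on two facts about the forward problem available from \cite{Christof2018,ClasonNhu2018}: first, at any Gâteaux point $v \in D$ the derivative $\eta := F'(v)h$ is the unique weak solution of the linearized equation $-\Delta\eta + \1_{\{y_v>0\}}\eta = h$, i.e. $F'(v)$ has exactly the form \eqref{eq:bouligand_pde} with the coefficient $\1_{\{y_v>0\}}$ in place of $\1_{\{y_u>0\}}$; and second, the comparison principle $w_1 \le w_2 \Rightarrow y_{w_1} \le y_{w_2}$ for the monotone equation \eqref{eq:maxpde}.

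The natural perturbation is a downward constant shift: set $u_c := u - c$ for $c>0$. I would first show that $u_c \in D$ for almost every $c>0$. Fixing $x \in \Omega$, the map $c \mapsto y_{u_c}(x)$ is Lipschitz by \eqref{eq:F-Lip}, and its one-sided derivative $\partial_c^+ y_{u_c}$ is the solution of a linearized equation with datum $-1$, hence strictly negative by the strong maximum principle; therefore $c \mapsto y_{u_c}(x)$ is \emph{strictly} decreasing. Consequently, for each $x$ the equation $y_{u_c}(x) = 0$ has at most one solution $c$, so by Fubini $\int_0^\infty |\{y_{u_c}=0\}|\,dc = \int_\Omega |\{c>0 : y_{u_c}(x)=0\}|\,dx = 0$, giving $|\{y_{u_c}=0\}| = 0$, that is $u_c \in D$, for a.e. $c$. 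I would then pick $c_n \downarrow 0$ from this full-measure set and set $u_n := u_{c_n}$, so that $u_n \to u$ in $L^2(\Omega)$ with each $u_n \in D$.

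The gain from shifting \emph{downward} is the correct limiting behaviour of the coefficients. Since $u_n \le u$, the comparison principle gives $y_{u_n} \le y_u$, while \eqref{eq:F-Lip} yields $y_{u_n} \to y_u$ uniformly on $\overline\Omega$. On $\{y_u>0\}$ and $\{y_u<0\}$ the indicators $\1_{\{y_{u_n}>0\}}$ therefore converge pointwise to $1$ and $0$; on the critical set $\{y_u=0\}$ we have $y_{u_n} \le 0$, and because $u_n \in D$ forces $|\{y_{u_n}=0\}| = 0$, in fact $y_{u_n}<0$ a.e. there, so the indicator converges to $0 = \1_{\{y_u>0\}}$. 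Thus $\1_{\{y_{u_n}>0\}} \to \1_{\{y_u>0\}}$ a.e. in $\Omega$, and by dominated convergence in every $L^p(\Omega)$, $p<\infty$.

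Finally I would pass to the limit in the linearized equations. Writing $\eta_n := F'(u_n)h$ and $\zeta := G_u h$, the difference solves $-\Delta(\eta_n-\zeta) + \1_{\{y_{u_n}>0\}}(\eta_n-\zeta) = (\1_{\{y_u>0\}} - \1_{\{y_{u_n}>0\}})\zeta$, whose right-hand side tends to $0$ in $L^2(\Omega)$ by the previous step (the fixed function $\zeta \in L^2(\Omega)$ times an a.e. convergent, uniformly bounded factor). Testing with $\eta_n - \zeta$ and using coercivity of $-\Delta + \1_{\{y_{u_n}>0\}}(\cdot)$ gives $\norm{\eta_n-\zeta}_{H^1_0(\Omega)} \le C\,\|(\1_{\{y_u>0\}}-\1_{\{y_{u_n}>0\}})\zeta\|_{L^2(\Omega)} \to 0$, so $F'(u_n)h \to G_u h$ in $H^1_0(\Omega)$. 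As this holds for every $h$ with $u_n \to u$ and $u_n \in D$, we conclude $G_u \in \partial_B F(u)$. The main obstacle is the construction in the second paragraph: one must \emph{simultaneously} secure Gâteaux differentiability of the perturbed points and the one-sided sign $y_{u_n} \le 0$ on $\{y_u=0\}$, which is exactly what distinguishes the limit $\1_{\{y_u>0\}}$ from the wrong limit $\1_{\{y_u\ge 0\}}$; the strict-monotonicity-plus-Fubini argument delivers both at once.
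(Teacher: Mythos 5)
Your proposal is correct and follows essentially the same route as the proof in \cite[Prop.~3.16]{Christof2018}, which this paper cites rather than reproves: downward constant shifts $u-c$, strict monotonicity of $c \mapsto F(u-c)(x)$ via comparison and the strong maximum principle combined with a Fubini argument to select Gâteaux points $u_n = u - c_n \in D$, the one-sided bound $F(u_n) \le F(u)$ to force the coefficients to converge to $\1_{\{y_u>0\}}$ (rather than $\1_{\{y_u\ge 0\}}$) on the degenerate set, and a uniform energy estimate to pass to the limit in the linearized equations. The only cosmetic remark is that on $\{y_u=0\}$ the inequality $y_{u_n}\le 0$ alone already gives $\1_{\{y_{u_n}>0\}}=0$ pointwise, so your extra appeal to $|\{y_{u_n}=0\}|=0$ at that step is superfluous (membership in $D$ is of course still needed for $F'(u_n)$ to exist and to solve the linearized PDE).
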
 
In general, for a given $h \in L^2(\Omega)$, the mapping $L^2(\Omega) \ni u \mapsto G_uh \in L^2(\Omega)$ is not continuous (see, e.g., \cite[Exam.~3.8]{ClasonNhu2018}), and the mapping $L^2(\Omega) \ni u \mapsto G_u \in \Linop(L^2(\Omega))$ is thus not continuous.

\subsection{Verification of assumptions}
\label{sec:ver-gtcc}

We now verify that the solution mapping for our example together with the mapping $G_u$ defined as in \cref{prop:Gu} satisfies \cref{ass:gtcc} as well as allowing $\rho$ to be taken sufficiently small to satisfy the conditions of \cref{thm:convergence-free,thm:REG}. We begin with the verification of the generalized tangential cone condition \eqref{eq:GTCC}.
\begin{proposition}
    \label{prop:gtcc}
    Let $\bar u \in L^2(\Omega)$, $\bar y := F(\bar u)$, and $\rho >0$. Then there holds
    \begin{equation*}
        \|F(\hat u) - F(u) - G_u(\hat u - u) \|_{L^2(\Omega)} \leq \eta(\rho) \|F(\hat u) - F(u) \|_{L^2(\Omega)}
    \end{equation*} 
    for all $u, \hat u \in \overline B_{L^2(\Omega)}(\bar u, \rho)$
    with 
    \begin{equation}
        \label{eq:eta_func}
        \eta(\rho) := C_\Omega \left| \left\{ |\bar y| \leq C_F \rho \right\} \right|^{1/14}
    \end{equation}
    for some constant $C_\Omega > 0$.
\end{proposition}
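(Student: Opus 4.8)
The plan is to analyze the residual $w := F(\hat u) - F(u) - G_u(\hat u - u)$ by deriving the linear elliptic equation it satisfies and then exploiting that its source term is supported on the small set $\{|\bar y| \leq C_F\rho\}$. Write $y := F(u)$, $\hat y := F(\hat u)$ and $\zeta := G_u(\hat u - u)$. Subtracting the weak forms of \eqref{eq:maxpde} for $y$ and $\hat y$ shows that $\hat y - y$ solves $-\Delta(\hat y - y) + (\hat y^+ - y^+) = \hat u - u$, while $\zeta$ solves \eqref{eq:bouligand_pde} with right-hand side $\hat u - u$ and coefficient $\1_{\{y>0\}}$. Eliminating $\hat u - u$ and substituting $\zeta = (\hat y - y) - w$ then yields that $w \in H^1_0(\Omega)$ solves
\begin{equation*}
    -\Delta w + \1_{\{y>0\}} w = r, \qquad r := \1_{\{y>0\}}(\hat y - y) - (\hat y^+ - y^+),
\end{equation*}
so that $w = G_u r$ with the very operator characterized in \cref{prop:Gu}.

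Next I would examine $r$ pointwise. Setting $a = y(x)$, $b = \hat y(x)$ and distinguishing the signs of $a$ and $b$, one checks that $r(x) = 0$ whenever $a$ and $b$ lie on the same side of $0$ (both positive, or both nonpositive), and that in every case $|r(x)| \leq |\hat y(x) - y(x)|$. In particular $r(x) \neq 0$ forces $0$ to lie between $y(x)$ and $\hat y(x)$ (inclusive). Since \eqref{eq:F-Lip} gives $\norm{y - \bar y}_{C(\overline\Omega)} \leq C_F\rho$ and $\norm{\hat y - \bar y}_{C(\overline\Omega)} \leq C_F\rho$, both $y(x)$ and $\hat y(x)$ lie in the interval $[\bar y(x) - C_F\rho,\, \bar y(x) + C_F\rho]$; hence $0$ lying between them forces $|\bar y(x)| \leq C_F\rho$. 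This establishes, up to a null set, that
\begin{equation*}
    \supp r \subset E := \{|\bar y| \leq C_F\rho\} \qquad \text{and} \qquad |r| \leq \1_E\,|\hat y - y| \quad \text{a.e. in } \Omega.
\end{equation*}
This support identification — which crucially uses that the coefficient $\1_{\{y>0\}}$ is built from $y$ rather than $\bar y$, controlled only through the uniform bound \eqref{eq:F-Lip} — is the heart of the argument and the step I expect to require the most care.

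Finally I would estimate $\norm{w}_{L^2(\Omega)}$ by a standard energy argument. Testing the equation for $w$ with $w$ itself and using $\1_{\{y>0\}} \geq 0$ gives $\norm{\nabla w}_{L^2}^2 \leq \int_E r\,w$. Applying Hölder's inequality with exponents $(3,2,6)$ to the three factors $\1_E$, $|\hat y - y|$, and $|w|$, followed by the Sobolev embedding $H^1_0(\Omega) \hookrightarrow L^6(\Omega)$ (valid for $d \in \{2,3\}$) and Poincaré's inequality, yields
\begin{equation*}
    \norm{\nabla w}_{L^2}^2 \leq |E|^{1/3}\,\norm{\hat y - y}_{L^2}\,\norm{w}_{L^6} \leq C\,|E|^{1/3}\,\norm{\hat y - y}_{L^2}\,\norm{\nabla w}_{L^2},
\end{equation*}
whence $\norm{w}_{L^2} \leq C\,|E|^{1/3}\,\norm{\hat y - y}_{L^2} = C\,|E|^{1/3}\,\norm{F(\hat u) - F(u)}_{L^2}$. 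Since $E \subset \Omega$ gives $|E| \leq |\Omega|$, the factor $|E|^{1/3}$ is bounded by $|\Omega|^{11/42}\,|E|^{1/14}$, so absorbing the volume factor into the constant produces the claim with $\eta(\rho) = C_\Omega\,|\{|\bar y| \leq C_F\rho\}|^{1/14}$. (The clean energy estimate in fact delivers the sharper exponent $1/3$; the weaker exponent $1/14$ in the statement is harmless and is recovered after this constant adjustment, which is convenient if one instead routes the elliptic bound through $L^{6/5}$-duality and interpolation between $L^2$ and $L^6$.)
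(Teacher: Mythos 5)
Your proof is correct, and it takes a genuinely different route from the paper's. The paper's own proof is essentially two lines: it invokes \cite[Lem.~3.9]{ClasonNhu2018} with $p=\frac{7}{4}$ to obtain the remainder bound with the factor $C_\Omega M(u,\hat u)^{1/14}$, where $M(u,\hat u)$ is the measure of the set on which $y_u$ and $y_{\hat u}$ lie on different sides of zero, and then performs exactly your second step --- using the Lipschitz estimate \eqref{eq:F-Lip} to trap both states in a $C_F\rho$-tube around $\bar y$, so that the sign-change set is contained in $E:=\{|\bar y|\le C_F\rho\}$; that inclusion is the only work the paper does here, and you reproduce it verbatim. What you do differently is replace the citation by a self-contained argument: your residual equation $-\Delta w + \1_{\{y>0\}}w = r$ with $r=\1_{\{y>0\}}(\hat y - y)-(\hat y^+-y^+)$ is correct (all four sign cases check out, giving $r=0$ off the sign-change set and $|r|\le \1_E\,|\hat y-y|$ there), and your $(3,2,6)$-Hölder energy estimate, combined with $H^1_0(\Omega)\hookrightarrow L^6(\Omega)$ and Poincaré, is precisely the technique the paper itself deploys to prove \eqref{eq:Q-identity-pde-aux} in \cref{lem:gtcc-extend}, so it is fully consistent with the paper's toolbox. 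Your route buys independence from the companion paper and the strictly sharper exponent $1/3$ in place of $1/14$ --- since $|E|^{1/3}\le|\Omega|^{11/42}\,|E|^{1/14}$, the stated form \eqref{eq:eta_func} follows after adjusting $C_\Omega$, and a smaller $\eta(\rho)$ only helps, as the convergence analysis requires $\eta(\rho)$ small; in effect you show the exponent $1/14$ in \eqref{eq:eta_func} is not optimal for this operator. What the paper's route buys is merely brevity and uniformity with the estimates of \cite{ClasonNhu2018}, where the exponent $1/14$ originates; your argument reproves the special case of that lemma needed here with a better exponent. I see no gap.
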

\begin{proof}
    Applying to \cite[Lem.~3.9]{ClasonNhu2018} for $p= \frac{7}{4}$ yields 
    \begin{equation}
        \label{eq:eta_esti1}
        \|F(\hat u) - F(u) - G_u(\hat u - u) \|_{L^2(\Omega)} \leq C_\Omega M(u, \hat u)^{1/14} \|F(\hat u) - F(u) \|_{L^2(\Omega)}
    \end{equation} 
    for some constant $C_\Omega$ and $M(u,\hat u) := \left| \{ y_u \leq 0, y_{\hat u} >0 \} \cup \{ y_u > 0, y_{\hat u} \leq 0 \} \right|$.
    According to \eqref{eq:F-Lip}, we thus have that
    \begin{equation*}
        \| \bar y - y_u \|_{C(\overline{\Omega})} \leq C_F \|\bar u - u \|_{L^2(\Omega)} \leq C_F\rho =:\epsilon
    \end{equation*}
    for all $u \in \overline B_{L^2(\Omega)}(\bar u, \rho)$ and $y_u := F(u)$. This implies, for any $u \in \overline B_{L^2(\Omega)}(\bar u, \rho)$, that
    \begin{equation*}
        -\epsilon + y_u(x) \leq \bar y \leq \epsilon + y_u(x) 
    \end{equation*} for all $x \in \overline\Omega$ with $y_u := F(u)$.
    We then have for any $u, \hat u \in \overline B_{L^2(\Omega)}(\bar u, \rho)$ that 
    \begin{align*}
        & \{y_{u}>0, y_{\hat u} \leq 0\} \subset \{-\epsilon \leq \bar y \leq \epsilon\},\\
        & \{y_{u} \leq 0, y_{\hat u} > 0\} \subset \{-\epsilon \leq \bar y \leq \epsilon\}
    \end{align*}
    with $y_{u} := F(u)$ and $y_{\hat u} := F(\hat u)$. It therefore holds that
    \begin{equation}
        \label{eq:different_state}
        \left| \1_{ \left\{ y_{u} >0 \right\} } - \1_{ \left\{ y_{\hat u} >0 \right\} } \right| = \left | \1_{ \left\{ y_{u} >0, y_{\hat u} \leq 0 \right\} } - \1_{ \left\{ y_{\hat u} >0, y_{u} \leq 0 \right\} } \right| \leq  \1_{ \left\{ -\epsilon \leq \bar y \leq \epsilon \right\} }.
    \end{equation}
    From this, we have
    \begin{equation*}
        M(u, \hat u) \leq \left| \left\{ |\bar y| \leq \epsilon \right\} \right| = \left| \left\{ |\bar y| \leq C_F \rho \right\} \right|,
    \end{equation*}
    which together with \eqref{eq:eta_esti1} deduces the desired result.
\end{proof}

We next construct, for any $u_1, u_2 \in L^2(\Omega)$, a bounded linear operator $Q(u_1, u_2): L^2(\Omega) \to L^2(\Omega)$ that satisfies \eqref{eq:Q-trans} and \eqref{eq:Q-identity}. 
\begin{lemma} \label{lem:gtcc-extend}
    Let $u_1, u_2 \in L^2(\Omega)$ be arbitrary and let $G_{u_i}$, $i =1,2$, be defined as in \cref{prop:Gu}. Then there exists a bounded linear operator $Q(u_1,u_2): L^2(\Omega) \to L^2(\Omega)$ such that
    \begin{equation}
        \label{eq:Q-trans-pde}
        G_{u_1} = Q(u_1, u_2)G_{u_2}
    \end{equation} 
    and
    \begin{equation}
        \label{eq:Q-identity-pde}
        \norm{I - Q(u_1, u_2)}_{\Linop(L^2(\Omega))} \leq C(u_1, u_2), 
    \end{equation}
    where 
    \begin{equation*}
        C(u_1, u_2) := C_* \norm{ \1_{ \left\{ F(u_1) >0 \right\} } - \1_{ \left\{ F(u_2) >0 \right\} } }_{L^{3}(\Omega)}
    \end{equation*}
    with some constant $C_* >0$ independent of $u_1$ and $u_2$.
\end{lemma}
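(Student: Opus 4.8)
The plan is to define $Q(u_1,u_2)$ explicitly through the solution operator $G_{u_1}$ and a multiplication operator, then verify the transformation identity \eqref{eq:Q-trans-pde} by an elementary PDE computation, and finally establish the bound \eqref{eq:Q-identity-pde} by a duality argument that exploits the Sobolev embedding available in dimension $d \leq 3$.

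Write $\chi_i := \1_{\{F(u_i) > 0\}}$ for $i = 1,2$, so that by \cref{prop:Gu} the function $\zeta_i := G_{u_i} h$ is the weak solution of $-\Delta \zeta_i + \chi_i \zeta_i = h$. First I would define, for arbitrary $\psi \in L^2(\Omega)$,
\begin{equation*}
    Q(u_1, u_2)\psi := \psi + G_{u_1}\bigl[(\chi_2 - \chi_1)\psi\bigr].
\end{equation*}
Since $|\chi_2 - \chi_1| \leq 1$, multiplication by $\chi_2 - \chi_1$ is bounded on $L^2(\Omega)$, and $G_{u_1}$ maps $L^2(\Omega)$ into $H^1_0(\Omega) \hookrightarrow L^2(\Omega)$; hence $Q(u_1,u_2) \in \Linop(L^2(\Omega))$ is well-defined. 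To establish \eqref{eq:Q-trans-pde}, subtract the defining equations for $\zeta_1 = G_{u_1}h$ and $\zeta_2 = G_{u_2}h$; inserting $\pm\chi_1 \zeta_2$ shows that $\zeta_1 - \zeta_2$ solves $-\Delta(\zeta_1 - \zeta_2) + \chi_1(\zeta_1 - \zeta_2) = (\chi_2 - \chi_1)\zeta_2$, whence $\zeta_1 - \zeta_2 = G_{u_1}[(\chi_2 - \chi_1)\zeta_2]$, i.e.\ $\zeta_1 = Q(u_1,u_2)\zeta_2$. As $h \in L^2(\Omega)$ is arbitrary, this is precisely $G_{u_1} = Q(u_1,u_2)G_{u_2}$.

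It then remains to prove \eqref{eq:Q-identity-pde}. From the definition, $(I - Q(u_1,u_2))\psi = -G_{u_1}[(\chi_2 - \chi_1)\psi]$, so I set $\zeta := G_{u_1}[(\chi_2 - \chi_1)\psi]$ and $g := (\chi_2 - \chi_1)\psi \in L^2(\Omega)$. Testing the weak equation $-\Delta\zeta + \chi_1\zeta = g$ with $\zeta$ and using $\chi_1 \geq 0$ together with Poincaré's inequality yields the coercivity estimate $\norm{\zeta}_{H^1_0(\Omega)} \leq C\norm{g}_{H^{-1}(\Omega)}$ with a constant independent of $u_1$. The quantitative core is to bound $\norm{g}_{H^{-1}(\Omega)}$: for $v \in H^1_0(\Omega)$, Hölder's inequality with the exponent triple $(3,2,6)$ (whose reciprocals sum to one) gives
\begin{equation*}
    \left| \int_\Omega (\chi_2 - \chi_1)\,\psi\, v \right| \leq \norm{\chi_2 - \chi_1}_{L^3(\Omega)}\,\norm{\psi}_{L^2(\Omega)}\,\norm{v}_{L^6(\Omega)}.
\end{equation*}
Here it is essential that $d \leq 3$, so that the Sobolev embedding $H^1_0(\Omega) \hookrightarrow L^6(\Omega)$ holds and $\norm{v}_{L^6(\Omega)} \leq C_S\norm{v}_{H^1_0(\Omega)}$. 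Combining this with the coercivity estimate and Poincaré's inequality $\norm{\zeta}_{L^2(\Omega)} \leq C_P\norm{\zeta}_{H^1_0(\Omega)}$ produces $\norm{(I - Q(u_1,u_2))\psi}_{L^2(\Omega)} \leq C_*\norm{\chi_2 - \chi_1}_{L^3(\Omega)}\norm{\psi}_{L^2(\Omega)}$ with $C_*$ independent of $u_1,u_2$, which is exactly \eqref{eq:Q-identity-pde}.

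I expect the step requiring the most care to be the recognition that $L^3(\Omega)$ is precisely the correct norm in \eqref{eq:Q-identity-pde}: it arises from matching the natural $L^2$ factor carried by $\psi$ against the $L^6$ factor supplied by the Sobolev embedding of $H^1_0(\Omega)$, which forces the remaining exponent to equal $3$. The dimensional restriction $d \leq 3$ enters at exactly this point, since for $d \geq 4$ the embedding $H^1_0(\Omega) \hookrightarrow L^6(\Omega)$ fails and a different exponent would have to be used. Everything else is a routine verification of well-posedness of the auxiliary linear problems and of the algebraic identity for $\zeta_1 - \zeta_2$.
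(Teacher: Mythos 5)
Your proof is correct, and its analytic core coincides with the paper's: both arguments hinge on testing the equation for the difference with itself and applying H\"older's inequality with the exponent triple $(3,2,6)$, the embedding $H^1_0(\Omega) \hookrightarrow L^6(\Omega)$ (valid since $d \leq 3$), and Poincar\'e's inequality -- which is exactly how the exponent $3$ in \eqref{eq:Q-identity-pde} arises in the paper as well. Where you differ is in the construction of $Q(u_1,u_2)$: the paper first defines $Q(u_1,u_2)v$ for $v \in H^1_0(\Omega)$ as the solution $w$ of $-\Delta w + \1_{\{y_1>0\}}w = -\Delta v + \1_{\{y_2>0\}}v$, proves the $L^2$ bound on this dense subspace, and then extends to $L^2(\Omega)$ by density, with \eqref{eq:Q-trans-pde} obtained from uniqueness of solutions to \eqref{eq:bouligand_pde}; you instead give the closed formula $Q(u_1,u_2)\psi = \psi + G_{u_1}[(\chi_2-\chi_1)\psi]$, which is manifestly bounded on all of $L^2(\Omega)$ since $G_{u_1} \in \Linop(L^2(\Omega), H^1_0(\Omega))$ and $|\chi_2 - \chi_1| \leq 1$, and you verify \eqref{eq:Q-trans-pde} by subtracting the two PDEs for $\zeta_1$ and $\zeta_2$. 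The two definitions agree (on $H^1_0(\Omega)$ the paper's $w$ satisfies $w - v = G_{u_1}[(\chi_2-\chi_1)v]$, which is your formula, and both operators are $L^2$-continuous), so you have constructed the same operator; your version buys a cleaner argument that dispenses with the density extension entirely, both for the definition and for the bound \eqref{eq:Q-identity-pde}, at no cost in generality. Your routing of the estimate through $\norm{g}_{H^{-1}(\Omega)}$ is also a cosmetic rather than substantive difference from the paper's direct testing of \eqref{eq:subtract-wv} with $w - v$.
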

\begin{proof}
    To prove the existence of the bounded linear operator $Q(u_1, u_2)$, we first construct this operator on $H^1_0(\Omega)$ and then extend it to $L^2(\Omega)$ by density.
    To this end, we set $y_i := F(u_i)$ with $ i= 1,2$. We now define the linear operator $Q(u_1, u_2): H^1_0(\Omega) \to H^1_0(\Omega) \hookrightarrow L^2(\Omega)$ as follows: for any $v \in H^1_0(\Omega)$, we set $w := Q(u_1, u_2)v$ defined as the unique solution in $H^1_0(\Omega)$ to
    \begin{equation*}
        -\Delta w + \1_{ \left \{ y_1 >0 \right\} }w = -\Delta v + \1_{ \left \{ y_2 >0 \right\} }v \quad \text {in } \Omega.
    \end{equation*}
    We now show that
    \begin{equation}
        \label{eq:L2-bounded}
        \norm{Q(u_1,u_2)v}_{L^2(\Omega)} \leq C \norm{v}_{L^2(\Omega)} \quad\text{for all } v \in H^1_0(\Omega)
    \end{equation}
    and for some constant $C$ independent of $v$. First, we have for any $v\in H^1_0(\Omega)$ that
    \begin{equation}
        \label{eq:subtract-wv}
        -\Delta (w-v) + \1_{ \left \{ y_1 >0 \right\} }(w-v) = \left[ \1_{ \left \{ y_2 >0 \right\} } - \1_{ \left \{ y_1 >0 \right\} }\right] v \quad \text {in } \Omega.
    \end{equation}
    It follows that
    \begin{equation*}
        \norm{w-v}_{H^1_0(\Omega)} \leq C\norm{v}_{L^2(\Omega)}
    \end{equation*}
    for some constant $C>0$. This and the continuous embedding $H^1_0(\Omega) \hookrightarrow L^2(\Omega)$ give
    \begin{equation*}
        \norm{w-v}_{L^2(\Omega)} \leq C\norm{v}_{L^2(\Omega)},
    \end{equation*}
    which along with the triangle inequality yields \eqref{eq:L2-bounded}.
    From the estimate \eqref{eq:L2-bounded} and the density of $H^1_0(\Omega)$ in $L^2(\Omega)$, the operator $Q(u_1, u_2)$ has a unique continuous extension, also denoted by $Q(u_1, u_2)$, from $L^2(\Omega)$ to $L^2(\Omega)$.

    It remains to show \eqref{eq:Q-trans-pde} and \eqref{eq:Q-identity-pde}. It is easy to obtain the identity \eqref{eq:Q-trans-pde} from the definition of $Q(u_1, u_2)$ and the uniqueness of solutions to \eqref{eq:bouligand_pde}. By density, to prove \eqref{eq:Q-identity-pde} we only need to show that 
    \begin{equation}
        \label{eq:Q-identity-pde-aux}
        \norm{v - Q(u_1, u_2)v}_{L^2(\Omega)} \leq C(u_1, u_2) \norm{v}_{L^2(\Omega)}\quad\text{for all }v \in H^1_0(\Omega).
    \end{equation}
    Since $\Omega$ is bounded in $\R^d$ with $d \in \{2,3\}$, one has $H^1_0(\Omega) \hookrightarrow L^6(\Omega)$. 
    Testing \eqref{eq:subtract-wv} by $w-v$ and exploiting the Hölder inequality yield
    \begin{equation*}
        \begin{aligned}
            \norm{\nabla (w-v)}_{L^2(\Omega)}^2 & \leq \int_\Omega \left[ \1_{ \left \{ y_2 >0 \right\} } - \1_{ \left \{ y_1 >0 \right\} }\right] v (w-v)dx \\
            & \leq \norm{\1_{ \left \{ y_2 >0 \right\} } - \1_{ \left \{ y_1 >0 \right\} }}_{L^3(\Omega)} \norm{v}_{L^2(\Omega)} \norm{w-v}_{L^6(\Omega)}.
        \end{aligned}
    \end{equation*}
    From this and the continuous embedding $H^1_0(\Omega) \hookrightarrow L^6(\Omega)$, we obtain
    \begin{equation*}
        \norm{\nabla (w-v)}_{L^2(\Omega)} \leq C \norm{\1_{ \left \{ y_2 >0 \right\} } - \1_{ \left \{ y_1 >0 \right\} }}_{L^3(\Omega)} \norm{v}_{L^2(\Omega)}
    \end{equation*}
    for some constant $C$ independent of $u_1$ and $u_2$. The Poincaré inequality thus implies that
    \begin{equation*}
        \norm{w-v}_{L^2(\Omega)} \leq C_* \norm{\1_{ \left \{ y_2 >0 \right\} } - \1_{ \left \{ y_1 >0 \right\} }}_{L^{3}(\Omega)} \norm{v}_{L^2(\Omega)},
    \end{equation*} 
    which is identical to \eqref{eq:Q-identity-pde-aux}.
\end{proof}

\begin{proposition}
    \label{prop:gtcc-extension}
    Let $Q: L^2(\Omega)^2 \to \Linop(L^2(\Omega))$ be the mapping defined as in \cref{lem:gtcc-extend}, let $\bar u \in L^2(\Omega)$ be arbitrary, and let $\rho$ be a positive number. Then, for any $u_1, u_2 \in \overline{B}_{L^2(\Omega)}(\bar u, \rho)$, there holds
    \begin{equation*}
        \norm{I - Q(u_1, u_2)}_{\Linop(L^2(\Omega))} \leq \kappa(\rho),
    \end{equation*}
    where
    \begin{equation}
        \label{eq:kappa_func}
        \kappa(\rho) := C_*|\{ |\bar y| \leq C_F \rho \}|^{1/3}
    \end{equation}
    with $\bar y := F(\bar u)$ and $C_*$ defined as in \cref{lem:gtcc-extend}.
\end{proposition}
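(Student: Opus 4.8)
The plan is to reduce the claim directly to \cref{lem:gtcc-extend}, which already provides the operator bound $\norm{I - Q(u_1,u_2)}_{\Linop(L^2(\Omega))} \leq C(u_1,u_2)$ with $C(u_1,u_2) = C_* \norm{\1_{\{F(u_1)>0\}} - \1_{\{F(u_2)>0\}}}_{L^3(\Omega)}$. It therefore only remains to estimate this $L^3$-norm uniformly over $u_1,u_2 \in \overline B_{L^2(\Omega)}(\bar u,\rho)$ by the right-hand side of \eqref{eq:kappa_func}, and for this I would reuse the pointwise bound obtained in the proof of \cref{prop:gtcc}.

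Concretely, since both $u_1$ and $u_2$ lie in $\overline B_{L^2(\Omega)}(\bar u,\rho)$, the Lipschitz estimate \eqref{eq:F-Lip} gives $\norm{\bar y - F(u_i)}_{C(\overline\Omega)} \leq C_F\rho =: \epsilon$ for $i=1,2$, exactly as in \cref{prop:gtcc}. The very same argument leading to \eqref{eq:different_state} then yields the a.e.\ pointwise inequality $\left|\1_{\{F(u_1)>0\}} - \1_{\{F(u_2)>0\}}\right| \leq \1_{\{|\bar y| \leq \epsilon\}}$. The key (and only nontrivial) observation is that the difference $\1_{\{F(u_1)>0\}} - \1_{\{F(u_2)>0\}}$ takes values in $\{-1,0,1\}$, so its third power in absolute value coincides with its absolute value. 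Hence
\begin{equation*}
    \norm{\1_{\{F(u_1)>0\}} - \1_{\{F(u_2)>0\}}}_{L^3(\Omega)}^3 = \int_\Omega \left|\1_{\{F(u_1)>0\}} - \1_{\{F(u_2)>0\}}\right| dx \leq \int_\Omega \1_{\{|\bar y|\leq\epsilon\}} dx = \left|\{|\bar y|\leq C_F\rho\}\right|.
\end{equation*}

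Taking cube roots and inserting this into the bound of \cref{lem:gtcc-extend} gives $\norm{I - Q(u_1,u_2)}_{\Linop(L^2(\Omega))} \leq C_* \left|\{|\bar y|\leq C_F\rho\}\right|^{1/3} = \kappa(\rho)$, which is the assertion. I do not anticipate any genuine obstacle here: the substantive analytic work (the construction of $Q$, its $L^2$-boundedness, and the super-level-set comparison \eqref{eq:different_state}) has already been carried out in \cref{lem:gtcc-extend} and \cref{prop:gtcc}. The only point requiring a moment's care is the elementary remark that for a $\{-1,0,1\}$-valued function the cubed $L^3$-norm equals the Lebesgue measure of its support, which is what converts the $L^3$-estimate into a clean measure bound and produces the exponent $1/3$ in \eqref{eq:kappa_func}.
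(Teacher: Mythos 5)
Your proposal is correct and follows essentially the same route as the paper's proof: reduce to \cref{lem:gtcc-extend}, use the Lipschitz estimate \eqref{eq:F-Lip} and the pointwise comparison \eqref{eq:different_state} to dominate the difference of indicator functions by $\1_{\{|\bar y|\leq C_F\rho\}}$, and convert the $L^3$-norm into the measure bound $|\{|\bar y|\leq C_F\rho\}|^{1/3}$. Your explicit remark that the difference is $\{-1,0,1\}$-valued is a harmless (and correct) way to make the last step precise, though one can equally just note that the pointwise domination gives $\norm{\1_{\{F(u_1)>0\}}-\1_{\{F(u_2)>0\}}}_{L^3(\Omega)} \leq \norm{\1_{\{|\bar y|\leq C_F\rho\}}}_{L^3(\Omega)} = |\{|\bar y|\leq C_F\rho\}|^{1/3}$ directly.
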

\begin{proof}
    Set $\bar y = F(\bar u)$. According to \eqref{eq:F-Lip}, we thus have that
    \begin{equation*}
        \| \bar y - y_u \|_{C(\overline{\Omega})} \leq C_F \|\bar u - u \|_{L^2(\Omega)} \leq C_F\rho =:\epsilon
    \end{equation*}
    for all $u \in \overline B_{L^2(\Omega)}(\bar u, \rho)$ and $y_u := F(u)$. Similar to \eqref{eq:different_state}, it holds that
    \begin{equation*}
        \left| \1_{ \left\{ y_1 >0 \right\} } - \1_{ \left\{ y_2 >0 \right\} } \right|  \leq  \1_{ \left\{ -\epsilon \leq \bar y \leq \epsilon \right\} },
    \end{equation*}
    which together with the definition of $C(u_1, u_2)$ yields
    \begin{equation*}
        C(u_1, u_2) \leq C_*|\{ |\bar y| \leq \epsilon\}|^{1/3}.
    \end{equation*}
    This and \cref{lem:gtcc-extend} give the desired conclusion.
\end{proof}

From \eqref{eq:eta_func} and \eqref{eq:kappa_func}, we immediately obtain that $\kappa(\rho)$ and $\eta(\rho)$ can be made arbitrarily small provided that $|\{\bar y = 0 \}|$ is small enough. In particular, we deduce that \eqref{eq:GTCC} holds with the required bound on the constant $\eta(\rho)$.
\begin{corollary} \label{lem:smallness_cond}
    Let functions $\kappa$ and $\eta$ be, respectively, defined by \eqref{eq:kappa_func} and \eqref{eq:eta_func}.
    Let $\bar u \in L^2(\Omega)$ be such that $|\{F(\bar u) = 0 \}|$ is sufficiently small. Then \eqref{eq:upper_bounded_GTCC} holds.
\end{corollary}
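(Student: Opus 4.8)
The plan is to exploit the monotone behaviour of the super-level sets appearing in \eqref{eq:eta_func} and \eqref{eq:kappa_func} as $\rho \to 0$. Writing $\bar y := F(\bar u)$ and $E_\rho := \{|\bar y| \leq C_F \rho\}$, I first observe that the family $\{E_\rho\}_{\rho > 0}$ is nested increasing in $\rho$ and that $\bigcap_{\rho > 0} E_\rho = \{\bar y = 0\}$, since $|\bar y| \geq 0$ almost everywhere. Because $\Omega$ is bounded, $|E_\rho| \leq |\Omega| < \infty$, so continuity of the Lebesgue measure from above applies: for any decreasing sequence $\rho_n \to 0^+$ one has $|E_{\rho_n}| \to |\{\bar y = 0\}|$, and by monotonicity of $\rho \mapsto |E_\rho|$ this upgrades to $\lim_{\rho \to 0^+} |E_\rho| = |\{\bar y = 0\}|$.

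Next I would insert this limit into the explicit formulas for $\eta$ and $\kappa$. Since $\eta(\rho) = C_\Omega |E_\rho|^{1/14}$ and $\kappa(\rho) = C_* |E_\rho|^{1/3}$ are continuous, non-decreasing functions of $|E_\rho|$, the above gives
\[
    \lim_{\rho \to 0^+} \eta(\rho) = C_\Omega |\{\bar y = 0\}|^{1/14}, \qquad \lim_{\rho \to 0^+} \kappa(\rho) = C_* |\{\bar y = 0\}|^{1/3}.
\]
Both right-hand sides tend to $0$ as $|\{\bar y = 0\}| \to 0$, which is exactly the assertion that $\eta(\rho)$ and $\kappa(\rho)$ can be made arbitrarily small. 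For the specific bound \eqref{eq:upper_bounded_GTCC}, it suffices to make ``sufficiently small'' quantitative as $|\{\bar y = 0\}| < C_\Omega^{-14}$: then $C_\Omega |\{\bar y = 0\}|^{1/14} < 1$, and since $\eta(\rho)$ decreases to this limit as $\rho \to 0^+$, there exists $\rho_0 > 0$ with $\eta_0 = \eta(\rho_0) < 1$. Recalling that in the present setting $D(F) = L^2(\Omega)$, so the constraint $\overline B_U(u^\dag, \rho_0) \subset D(F)$ in \cref{ass:gtcc} places no restriction on $\rho_0$, this choice of $\rho_0$ is admissible.

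I do not anticipate a serious obstacle here: the only genuine ingredient is continuity of measure from above, which is available precisely because $|\Omega| < \infty$ supplies the finiteness needed to pass to the limit over the nested sets $E_\rho$. The remaining steps are substitutions into the closed-form expressions \eqref{eq:eta_func}--\eqref{eq:kappa_func}. The one point deserving care is to phrase ``sufficiently small'' so that the resulting threshold is compatible with the smallness of $\kappa_0,\eta_0$ also demanded by the later conditions \eqref{eq:rho1_cond}, \eqref{eq:rho2_cond}, and \eqref{eq:rho3_cond}; the same limiting argument covers all of these simultaneously, since each such condition asks only that $\kappa(\rho)$ and $\eta(\rho)$ fall below an explicit constant for some small $\rho$, and the two displayed limits show this can always be arranged once $|\{\bar y = 0\}|$ is taken small enough.
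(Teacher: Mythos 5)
Your proof is correct and takes essentially the same route as the paper, which states the corollary as an immediate consequence of the explicit formulas \eqref{eq:eta_func} and \eqref{eq:kappa_func}; the continuity-of-measure argument $\lvert\{|\bar y|\le C_F\rho\}\rvert \to \lvert\{\bar y = 0\}\rvert$ as $\rho\to 0^+$ that you spell out (via a countable sequence plus monotonicity in $\rho$) is precisely the implicit step there. Your quantitative threshold $\lvert\{\bar y=0\}\rvert < C_\Omega^{-14}$ and the observation that $D(F)=L^2(\Omega)$ imposes no restriction on $\rho_0$ are sound elaborations, not departures.
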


\subsection{Bouligand--Levenberg--Marquardt iteration}

The results obtained so far indicate that the solution mapping $F$ of \eqref{eq:maxpde} and the mapping $u \mapsto G_u$ with $G_u$ the Bouligand subderivative defined as in \cref{prop:Gu} satisfy \cref{ass:gtcc}, provided that $\left| \{ F(u^\dag) = 0 \} \right|$ is small enough. We note that in this case $F$ is injective, i.e., $u^\dag$ is the unique solution to \eqref{eq:inv-pro}.
We can therefore exploit $G_u$ in the Levenberg--Marquardt method \eqref{eq:LM-iteration}--\eqref{eq:disc2} to obtain a convergent \emph{Bouligand--Levenberg--Marquardt iteration} for the iterative regularization of the non-smooth ill-posed problem $F(u)=y$. 
\begin{corollary}\label{cor:bouligandLM}
    Let $u^\dag \in L^2(\Omega)$ be such that $\left| \{ y^\dag = 0 \} \right|$ is small enough with $y^\dag := F(u^\dag)$. Let $\{\alpha_n \}$ be defined by \eqref{eq:Lag-para} with $\alpha_0^{1/2} \geq \norm{G_{u^\dag}}_{\Linop(L^2(\Omega))}$. Then there exists $\rho^* >0$ such that for all starting points $u_0\in \overline B_{L^2(\Omega)}(u^\dag,\rho^*)$, the Bouligand--Levenberg--Marquardt iteration \eqref{eq:LM-iteration}
    stopped according to the discrepancy principle \eqref{eq:disc2} is a well-posed and strongly convergent regularization method.
\end{corollary}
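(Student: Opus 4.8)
The plan is to deduce the corollary directly from \cref{thm:REG} by verifying that the solution operator $F$ of \eqref{eq:maxpde} together with the Bouligand subderivative $G_u$ from \cref{prop:Gu} meets all of its hypotheses once $|\{y^\dag=0\}|$ is small and the radius is taken small enough. Most of the work is already available: \cref{prop:gtcc}, \cref{lem:gtcc-extend} and \cref{prop:gtcc-extension} establish \cref{ass:gtcc} with the explicit, nondecreasing functions $\eta(\rho)=C_\Omega|\{|y^\dag|\le C_F\rho\}|^{1/14}$ and $\kappa(\rho)=C_*|\{|y^\dag|\le C_F\rho\}|^{1/3}$ (taking $\bar u=u^\dag$), while \cref{lem:smallness_cond} yields \eqref{eq:upper_bounded_GTCC}. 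It therefore remains to (i) check \cref{ass:adj_compact}, (ii) dispose of the orthogonality requirement $u_0-u^\dag\in\mathcal N(G_\dag)^\bot$ of \cref{thm:REG}, and (iii) exhibit a single small radius at which the smallness conditions \eqref{eq:rho1_cond}, \eqref{eq:rho2_cond}, \eqref{eq:rho_1bar_cond}, \eqref{eq:rho2_bar_cond}, \eqref{eq:rho3_cond} (for $\nu=\tfrac14$) and \eqref{eq:rho_last_cond} hold simultaneously.

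For (i), I would note that $G_{u^\dag}$ maps $L^2(\Omega)$ boundedly into $H^1_0(\Omega)$ by the a priori estimate for \eqref{eq:bouligand_pde}, so that, since $H^1_0(\Omega)\hookrightarrow L^2(\Omega)$ compactly by Rellich--Kondrachov, the operator $G_{u^\dag}\colon L^2(\Omega)\to L^2(\Omega)$ is compact. For (ii), the decisive observation is that the operator $-\Delta+\1_{\{y^\dag>0\}}$ in \eqref{eq:bouligand_pde} is coercive on $H^1_0(\Omega)$, the zeroth-order coefficient being nonnegative; hence $G_{u^\dag}h=0$ means its solution $\zeta=G_{u^\dag}h$ vanishes, whence $h=-\Delta\zeta+\1_{\{y^\dag>0\}}\zeta=0$. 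Thus $G_{u^\dag}$ is injective, $\mathcal N(G_\dag)=\{0\}$, and $\mathcal N(G_\dag)^\bot=L^2(\Omega)$, so the requirement $u_0-u^\dag\in\mathcal N(G_\dag)^\bot$ is automatic for \emph{every} starting point --- which is exactly what permits the corollary to quantify over all $u_0$ in a ball.

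The core of the argument is (iii). I would first fix $\tau>\tau_0>1$ and $r\in(0,1)$, so that the constants $c_i$ and $K_i(r,\tfrac14)$ are determined, and then take the domain radius $\rho_0$ itself small (legitimate since $D(F)=L^2(\Omega)$, whence $\overline B_{L^2(\Omega)}(u^\dag,\rho_0)\subset D(F)$ for any $\rho_0$). The key structural fact is that every quantity entering the conditions --- $L_1,L_2,L_3$ from \eqref{eq:L1_func} and \eqref{eq:L23_func}, $C$ from \eqref{eq:C_func}, $\pi_1,\pi_2$, and $T_1,\dots,T_5$ --- is a fixed continuous combination of $\kappa(\rho),\eta(\rho),\kappa_0,\eta_0$ vanishing as these vanish, whereas each right-hand side ($\tfrac{c_0}{2c_2}$, $1$, $\tau_0-1$, $\tfrac{c_0}{1-\eta_0}$, $\tfrac{c_0}{2K_1}$, the minimum in \eqref{eq:rho2_bar_cond}, $2+c_0$, $2c_0(1+c_0)$, $3+c_3$, and $\tau-1$) stays bounded away from $0$. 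Since $\eta(\rho)\to C_\Omega|\{y^\dag=0\}|^{1/14}$ and $\kappa(\rho)\to C_*|\{y^\dag=0\}|^{1/3}$ as $\rho\downarrow0$ by monotone convergence of $|\{|y^\dag|\le C_F\rho\}|$ to $|\{y^\dag=0\}|$, assuming $|\{y^\dag=0\}|$ small and then taking $\rho_0$ small makes $\eta_0,\kappa_0$ --- and with them all of $L_i,C,\pi_i,T_i$ --- as small as needed. Setting $\rho_3:=\rho_0$ and all auxiliary radii $\rho_1=\rho_2=\bar\rho_1=\bar\rho_2=\rho_3$ equal then satisfies every listed condition at once, using the monotonicity of $\eta,\kappa$; finally I would put $\rho^*:=\rho_3/\bigl(4(2+c_1\gamma_0)\bigr)$, so that $u_0\in\overline B_{L^2(\Omega)}(u^\dag,\rho^*)$ gives $2(2+c_1\gamma_0)\norm{u_0-u^\dag}_{L^2(\Omega)}\le\tfrac12\rho_3<\rho_3$, meeting the last hypothesis of \cref{thm:REG}. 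Its conclusions --- well-posedness, the logarithmic stopping bound, and $u^{\delta_k}_{N_{\delta_k}}\to u^\dag$ --- are precisely the claimed regularization property.

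The main obstacle I anticipate is bookkeeping rather than conceptual: one must ensure that the whole tower of radius conditions can be met by a single choice without circularity. The only interdependence is that $\gamma_0$ (and through it the starting ball) depends on $\eta_0$; but since none of \eqref{eq:rho1_cond}--\eqref{eq:rho_last_cond} involves $\gamma_0$, one first fixes $\rho_0$ (hence $\eta_0,\kappa_0$ and a valid $\gamma_0>\tfrac{2c_0}{(1-\eta_0)(\tau-\tau_0)}$) and only afterwards shrinks $\rho_3\le\rho_0$, which resolves the dependence cleanly. The single analytic point needing care is the limit $\lim_{\rho\downarrow0}\eta(\rho)=C_\Omega|\{y^\dag=0\}|^{1/14}$, which rests on the monotone convergence $\{|y^\dag|\le C_F\rho\}\downarrow\{y^\dag=0\}$; together with the injectivity of $F$ for small $|\{y^\dag=0\}|$ noted before the corollary, this guarantees that the limit in \eqref{eq:converge_pro} is the unique solution $u^\dag$, so that the method is genuinely a strongly convergent regularization.
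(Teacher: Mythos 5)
Your proposal is correct and follows essentially the same route as the paper's own proof: verify \cref{ass:gtcc} via \cref{prop:gtcc,lem:gtcc-extend,prop:gtcc-extension,lem:smallness_cond}, verify \cref{ass:adj_compact} via \cref{prop:Gu} and the compact embedding $H^1_0(\Omega)\hookrightarrow L^2(\Omega)$, note that $\mathcal{N}(G_{u^\dag})=\{0\}$ so that $\mathcal{N}(G_{u^\dag})^\bot=L^2(\Omega)$ and the orthogonality hypothesis of \cref{thm:REG} is vacuous, and conclude by \cref{thm:REG}. You merely make explicit several details the paper leaves implicit --- the injectivity argument for $G_{u^\dag}$, the monotone-convergence limit $|\{|y^\dag|\le C_F\rho\}|\downarrow|\{y^\dag=0\}|$ behind the simultaneous satisfiability of the smallness conditions, the non-circular ordering of the choices of $\rho_0$, $\gamma_0$, $\rho_3$, and the explicit radius $\rho^*$ --- which is elaboration, not a different approach.
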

\begin{proof}
    Take $U = Y = L^2(\Omega)$ and note that $\mathcal{N}(G_{u^\dag}) = \{0\}$ and so $\mathcal{N}(G_{u^\dag})^\bot = L^2(\Omega)$. 
    Then, \cref{ass:gtcc} is satisfied according to \cref{prop:gtcc,prop:gtcc-extension,lem:gtcc-extend,lem:smallness_cond}. 
    \Cref{ass:adj_compact} follows directly from \cref{prop:Gu} together with the compactness of the embedding $H^1_0(\Omega)\hookrightarrow L^2(\Omega)$.
    Finally, the various requirements on the smallness of constants involving $\eta(\rho)$ and $\kappa(\rho)$ are satisfied due to \cref{prop:gtcc-extension}.
    The claim now follows from \cref{thm:REG}.
\end{proof}
We point out that the assumption on the support of $F(u^\dag)$ does \emph{not} entail a similar requirement on $F(u_n^\delta)$, and that this non-differentiability of $F$ at the iterates is the primary source of difficulty in showing convergence.

\bigskip

To close this section, we comment on the practical implementation of the Bouligand--Levenberg--Marquardt iteration \eqref{eq:LM-iteration} for the non-smooth PDE \eqref{eq:maxpde}.
Let $y^\delta \in L^2(\Omega)$. For any $u_n^\delta \in L^2(\Omega)$, we set $y_n^\delta := F(u_n^\delta)$ and define the \emph{correction step}
\begin{equation}
    \label{eq:correction}
    s_n^\delta := \left(\alpha_n I + (G_{n}^{\delta})^*G_{n}^\delta \right)^{-1} (G_{n}^{\delta})^*\left(y^\delta - y_n^\delta \right).
\end{equation}
From this, \eqref{eq:LM-iteration} can be rewritten as $u_{n+1}^\delta = u_n^\delta + s_n^\delta$ with
\begin{equation*}
    \alpha_ns_n^\delta = (G_n^\delta)^*\left(- G_n^\delta s_n^\delta + y^\delta - y_n^\delta\right).
\end{equation*}
By introducing $z_n^\delta := G_{n}^\delta s_n^\delta$ and $b_n^\delta := y^\delta - y_n^\delta$, we deduce that $s_n^\delta$ and $z_n^\delta$ satisfy
\begin{equation}
    \label{eq:correction_ex}
    \left\{
        \begin{aligned}
            - \Delta z_n^\delta + \1_{\{y_n^\delta > 0\}} z_n^\delta &= s_n^\delta &&\ \text {in } \Omega, \quad z_n^\delta = 0 \ \text {on } \partial \Omega,\\
            - \Delta s_n^\delta + \1_{\{y_n^\delta > 0\}}s_n^\delta &= \frac{1}{\alpha_n} \left(- z_n^\delta +b_n^\delta \right) &&\ \text {in } \Omega, \quad s_n^\delta = 0 \ \text {on } \partial \Omega.
        \end{aligned}
    \right.
\end{equation}
A Bouligand--Levenberg--Marquardt step can thus be performed by solving a coupled system of two elliptic equations.

\section{Numerical experiments} \label{sec:num}

This section provides numerical results that illustrate the performance of the Bouligand--Levenberg--Marquardt iteration. In the first subsection, we give a short description of our discretization scheme and the solution of the non-smooth PDE using a semismooth Newton (SSN) method. The second subsection reports the results of numerical examples.

\subsection{Discretization}

In the following, we restrict ourselves to the case where $\Omega$ is an open bounded convex polygonal domain in $\R^2$. We shall use the standard continuous piecewise linear finite elements (FE), see, e.g., \cite{Knabner-Angermann,Glowinski1984}, to discretize the non-smooth semilinear elliptic equation \eqref{eq:maxpde} as well as the linear system \eqref{eq:correction_ex}. 
In \cite{Christof2018,ClasonNhu2018}, the discrete version of \eqref{eq:maxpde} as well as its equivalent nonlinear algebraic system were obtained by employing a mass lumping scheme for the non-smooth nonlinearity. We shall use the same technique to discretize the system \eqref{eq:correction_ex}.
Let $\mathcal{T}_h$ stand for the triangulation of $\Omega$ corresponding to parameter $h$, where $h$ denotes the maximum length of the edges of all the triangles of $\mathcal{T}_h$. 
For each triangulation $\mathcal{T}_h$, let $V_h \subset H^1_0(\Omega)$ be the space of piecewise linear finite elements on $\Omega$. 
We denote by $d_h$ and $\{\varphi_j\}_{j=1}^{d_h}$, respectively, the dimension and the basis of $V_h$ corresponding to the set of nodes $\mathcal{N}_h := \{x_1,\dots,x_{d_h} \}$. For each $T \in \mathcal{T}_h$, we write $\overline{T}$ for the closure of $T$ (i.e., the inner sum is over all vertices of the triangle $T$).

We first consider the nonlinear equation \eqref{eq:maxpde}. Let $y_h$ and $u_h \in V_h$ be the FE approximations of $y$ and $u$, respectively, with $y$ and $u$ satisfying \eqref{eq:maxpde}. As shown in \cite{ClasonNhu2018,Christof2018}, the discrete equation of \eqref{eq:maxpde} is given by
\begin{equation}
    \label{eq:dis-maxpde} 
    \int_{\Omega} \nabla y_h \cdot \nabla v_h \,dx + \frac{1}{3}\sum_{T \in \mathcal{T}_h} |T| \sum_{x_i \in \overline T \cap \mathcal{N}_h} \max(0,y_h(x_i))v_h(x_i) = \int_{\Omega} u_hv_h\,dx, \quad v_h \in V_h,
\end{equation} 
and its equivalent nonlinear algebraic system is defined as
\begin{equation}
    \mathbf{A} y + \mathbf{D} \max(y,0) = \mathbf{M}u, \label{eq:coor-maxpde}
\end{equation} 
where $\mathbf{A} := ((\nabla\varphi_j, \nabla \varphi_i)_{L^2(\Omega)})_{i,j=1}^{d_h}$ is the stiffness matrix, $\mathbf{M} := ((\varphi_j, \varphi_i)_{L^2(\Omega)})_{i,j=1}^{d_h}$ is the mass matrix, $\mathbf D := \frac{1}{3}\diag(\omega_1,\dots, \omega_{d_h})$ with $\omega_i :=|\{\varphi_i\neq 0\}|$ is the lumped mass matrix, and $\max(\cdot, 0): \R^{d_h} \to \R^{d_h}$ is the componentwise max-function. According to \cite{ClasonNhu2018}, the equation \eqref{eq:coor-maxpde} is semismooth  in $\R^{d_h}$ and can be solved via a SSN method. 
Here, with a slight abuse of notation, we write $y\in\R^{d_h}$ and $u\in\R^{d_h}$, respectively, instead of $(y_h(x_i))_{i=1}^{d_h}$ and $(u_h(x_i))_{i=1}^{d_h}$. 

We now turn to the system \eqref{eq:correction_ex}. According to \cite[Sec.~2.5]{Glowinski1984} (see also \cite[Sec.~9.1.3]{Ulbrich2011}), for a fixed $\delta>0$, the discrete linear system of \eqref{eq:correction_ex} is given by
\begin{equation*}
    \label{eq:dis-optimality}
    \left\{
        \begin{aligned}
            \int_{\Omega} \nabla z_h \cdot \nabla v_h \,dx + \frac{1}{3}\sum_{T \in \mathcal{T}_h} |T| \sum_{x_i \in \overline T \cap \mathcal{N}_h} \1_{\{y_n^\delta>0\}}(x_i) z_h(x_i) v_h(x_i) & = \int_{\Omega} s_h v_h\,dx,\\
            \int_{\Omega} \nabla s_h \cdot \nabla w_h \,dx + \frac{1}{3}\sum_{T \in \mathcal{T}_h} |T| \sum_{x_i \in \overline T \cap \mathcal{N}_h} \1_{\{y_n^\delta>0\}}(x_i) s_h(x_i) w_h(x_i) & = -\frac{1}{\alpha_n} \int_{\Omega} \left(z_h - b_h \right)w_h\,dx 
        \end{aligned}
    \right. 
\end{equation*}
for all $v_h,w_h \in V_h$, where $z_h, s_h$, and $b_h$ stand for the FE approximations of $z_n^\delta, s_n^\delta$, and $b_n^\delta$, respectively.
By standard computations, the above variational system can be reformulated as
\begin{equation}
    \label{eq:coor-optimality}
    \left\{
        \begin{aligned}
            \mathbf{A}z + \mathbf{K_y}z &= \mathbf{M}s,\\
            \mathbf{A}s + \mathbf{K_y}s &= - \frac{1}{\alpha_n} \mathbf{M}(z-b)
        \end{aligned}
    \right.
\end{equation}
with 
\begin{equation*}
    \mathbf{K_y} = \frac{1}{3} \diag\left(\omega_i \1_{\{\textbf{y}_i >0\}} \right) \in \R^{d_h \times d_h}, \quad \textbf{y}_i := y_n^\delta(x_i) \quad \text {for all } 1 \leq i \leq d_h. 
\end{equation*}
Here, again, we denote the coefficient vectors $(z_h(x_i))_{i=1}^{d_h}$, $(s_h(x_i))_{i=1}^{d_h}$, and $(b_h(x_i))_{i=1}^{d_h}$ by $z\in\R^{d_h}$, $s\in\R^{d_h}$, and $b\in\R^{d_h}$, respectively. 
A standard argument shows that \eqref{eq:coor-optimality} is uniquely solvable.

\subsection{Numerical examples} 

In this subsection, we consider $\Omega :=(0,1) \times (0,1) \subset \R^2$ and employ a uniform triangular Friedrichs--Keller triangulation with $n_h\times n_h$ vertices for $n_h=512$ unless noted otherwise. 
A direct sparse solver is used to solve the SSN system \eqref{eq:coor-maxpde} and the linear system \eqref{eq:coor-optimality}. The SSN iteration for solving \eqref{eq:coor-maxpde} is initiated at $y^0=0$ and terminated if the active sets $AC^k:=\{i:y^k_i> 0\}$ at two consecutive iterates coincide.
The Python implementation used to generate the following results (as well as a Julia implementation) can be downloaded from \url{https://github.com/clason/bouligandlevenbergmarquardt}. The timings reported in the following were obtained using an Intel Core i7-7600U CPU (2.80\,GHz) and 16\,GByte RAM.

As in \cite{ClasonNhu2018}, we choose the exact solution 
\begin{multline*}
    u^\dag(x_1,x_2) := \max(y^\dag(x_1,x_2),0) \\
    + 
    \left[
        4 \pi^2 y^\dag(x_1,x_2) - 2\left((2x_1-1)^2 + 2(x_1-1+\beta)(x_1-\beta) \right)\sin (2\pi x_2)
    \right]
    \1_{[\beta,1-\beta]}(x_1)
\end{multline*}
where
\begin{equation*}
    y^\dag(x_1,x_2) := \left[(x_1-\beta)^2(x_1-1+ \beta)^2 \sin (2\pi x_2)\right]\1_{[\beta,1-\beta]}(x_1)
\end{equation*} 
for some $\beta \in [0,0.5]$ is the corresponding exact state.
Obviously, $y^\dag \in H^2(\Omega) \cap H^1_0(\Omega)$ and satisfies \eqref{eq:maxpde} for the right-hand side $u^\dag$. Moreover, $y^\dag$ vanishes on a set of measure $2\beta$. The forward operator $F: L^2(\Omega) \to L^2(\Omega)$ is therefore not Gâteaux differentiable at $u^\dag$ whenever $\beta \in (0,0.5]$; see, e.g., \cite[Prop.~3.4]{ClasonNhu2018}. Let us denote by $y_{h}^\dag$ the discrete projection of $y^\dag$ to $V_h$.
We now add a random Gaussian noise componentwise to $y^\dag_{h}$ to create noisy data $y^\delta_{ h}$ corresponding to the noise level
\begin{equation*}
    \delta := \| y^\dag_{h} - y^\delta_{h} \|_{L^2(\Omega)}.
\end{equation*}
Here and below, all norms for discrete functions $v_h$ are computed exactly by $\|v_h\|_{L^2(\Omega)}^2 = v_h^T\mathbf{M} v_h$ (identifying again the function $v_h$ with its vector of expansion coefficients). From now on, to simplify the notation, we omit the subscript $h$.
In the following, we consider different choices of the parameter $\beta$ and two different choices of starting points: the trivial point $u_0 \equiv 0$ and the discrete projection of 
\begin{equation} \label{guess-source}
    \bar u := u^\dag - 20 \sin(\pi x_1) \sin(2\pi x_2).
\end{equation}
We point out that for the second starting point, $u^\dag$ satisfies the generalized source condition
\begin{equation}
    \label{source-condition}
    u^\dag - \bar u \in \mathcal{R}\left[\left(G_{u^\dag}^*G_{u^\dag}\right)^{1/2}\right] \subset \mathcal{R}\left[\left(G_{u^\dag}^*G_{u^\dag}\right)^{\nu}\right]
\end{equation} 
for some $\nu \in (0,1/2)$.
Note also that $\bar u$ is far from the exact solution $u^\dag$ and that $u_0 \equiv 0$ is not close to $u^\dag$ when the parameter $\beta$ is far from $0.5$. 
For the case $\beta =0.005$, the exact solution $u^\dag$ and the starting point $\bar u$ are shown in \cref{fig:target_exactsolution} and \cref{fig:guess-source}, respectively. The corresponding noisy data $y^\delta$ and the reconstructions $u_{N_\delta}^\delta$ with respect to the noise level $\delta \in \{ 1.056 \cdot 10^{-2}, 1.058 \cdot 10^{-4}\}$ are presented in \cref{fig:noise-source} for parameters $\alpha_0=1$, $r=0.5$, $\beta = 0.005$, $\tau =1.5$ and for the starting point $u_0 = \bar u$. 
\begin{figure}[tp]
    \centering
    \begin{minipage}[t]{0.495\textwidth}
        \centering
        \includegraphics[width=\linewidth]{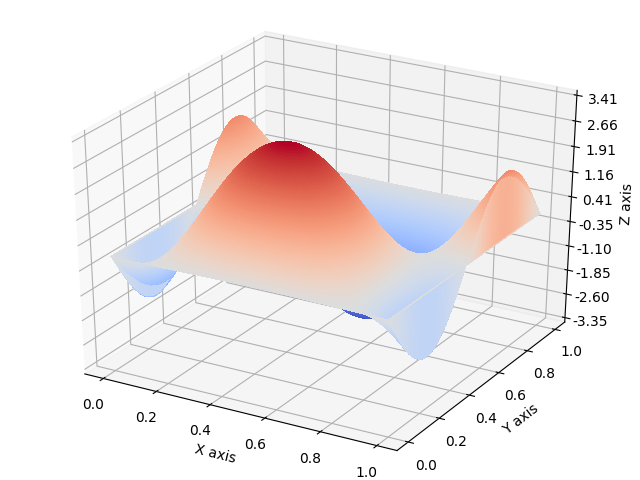}
        \caption{exact solution $u^\dag$ for $\beta = 0.005$}
        \label{fig:target_exactsolution}
    \end{minipage}
    \hfill
    \begin{minipage}[t]{0.495\textwidth}
        \includegraphics[width=\linewidth]{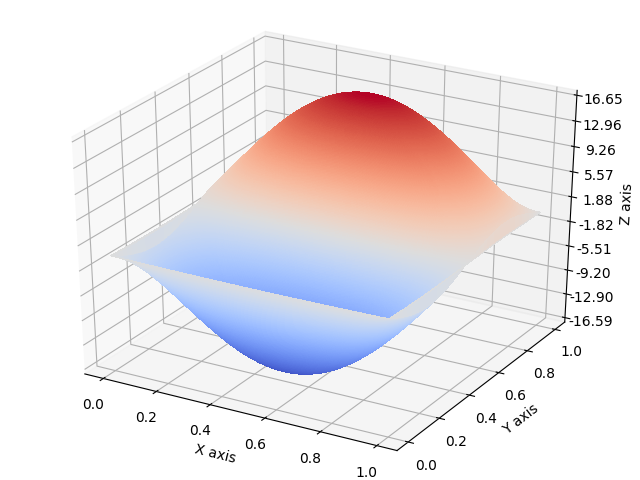}
        \caption{starting point $u_0=\bar u$ for $\beta =0.005$}
        \label{fig:guess-source}
    \end{minipage}
    \begin{subfigure}[t]{0.495\textwidth}
        \centering
        \includegraphics[width=\linewidth]{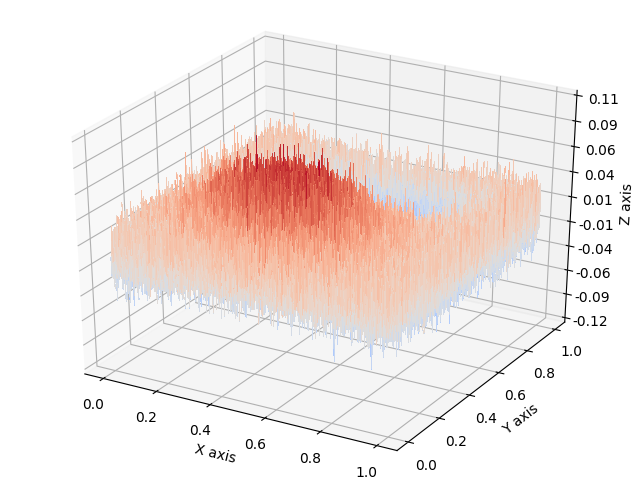}
        \caption{$y^\delta$, $\delta = 1.056 \cdot 10^{-2}$}
    \end{subfigure}
    \hfill
    \begin{subfigure}[t]{0.495\textwidth}
        \centering 
        \includegraphics[width=\linewidth]{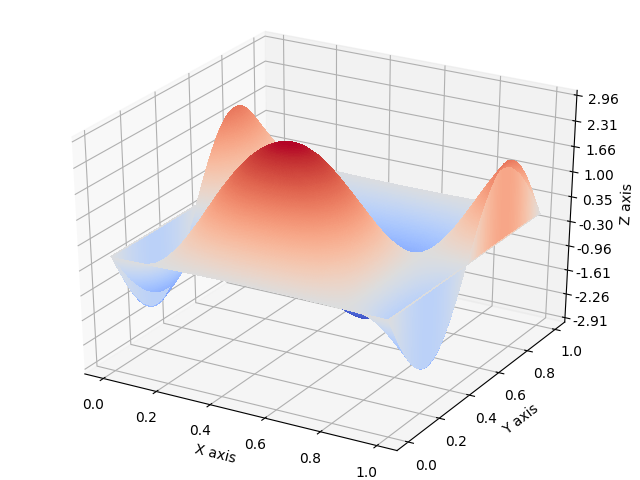}
        \caption{$u^\delta_{N_\delta}$, $N_\delta = 14$} 
    \end{subfigure}
    \\
    \begin{subfigure}[t]{0.495\textwidth}
        \centering
        \includegraphics[width=\linewidth]{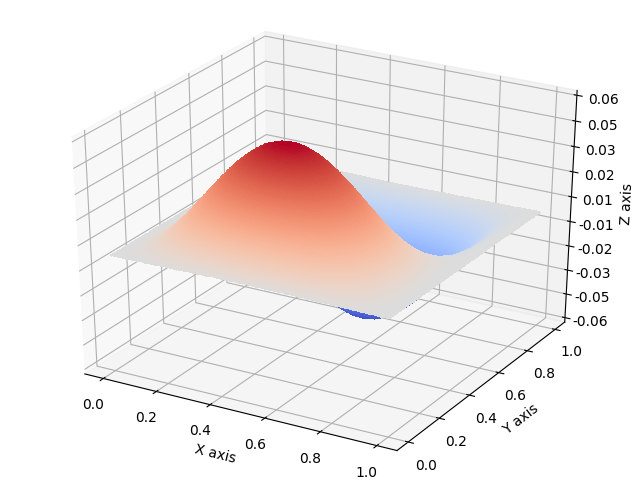}
        \caption{$y^\delta$, $\delta = 1.058 \cdot 10^{-4}$}
    \end{subfigure}
    \hfill
    \begin{subfigure}[t]{0.495\textwidth}
        \centering 
        \includegraphics[width=\linewidth]{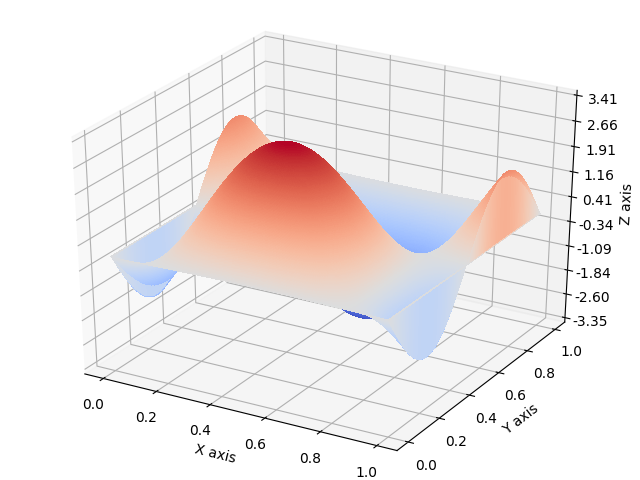}
        \caption{$u^\delta_{N_\delta}$, $N_\delta = 16$}
    \end{subfigure}
    \caption{noisy data $y^\delta$ and reconstructions $u^\delta_{N_\delta}$ for  $u_0 = \bar u$ and $\alpha_0 =1$, $r=0.5$, $\beta = 0.005$, $\tau = 1.5$}
    \label{fig:noise-source}
\end{figure}

We now address the regularization property of the Bouligand--Levenberg--Marquardt iteration from \cref{cor:bouligandLM}. 
We first illustrate the effects of the starting guess on the convergence of the iteration. \cref{tab:noise_diff_guess} displays for the same parameters $\alpha_0 = 1$, $r= 0.5$, $\beta =0.005$, $\tau=1.5$, a decreasing sequence of noise levels, and both starting points (for the same realization of the random data) the stopping index $N_\delta=N(\delta,y^\delta)$, the logarithmic rate of the stopping index
\begin{align}
    \label{eq:log-rate}
    LR_\delta &:= \frac{N_\delta}{1 + |\log(\delta)|},
    \intertext{the relative error}
    \label{rel-error-noise}
    E^\delta&:= \frac{\| u^\dag -u^\delta_{N_\delta} \|_{L^2(\Omega)}}{\| u^\dag\|_{L^2(\Omega)}},
    \intertext{the empirical convergence rate}
    \label{conv-rate}
    R^\delta&:= \frac{\| u^\dag - u^\delta_{N_\delta}\|_{L^2(\Omega)}}{\sqrt{\delta}},
\end{align}
as well as the final Tikhonov parameter $\alpha_{N_\delta}$ from \eqref{eq:Tikhonov-choice}.
This table indicates that the speed of convergence of the iteration for the starting point $u_0 = \bar u$ is faster than that for the trivial starting point $u_0\equiv 0$. While the growth of the stopping index $N_\delta$ for the trivial starting point is slightly faster than that for $\bar u$, the logarithmic rates \eqref{eq:log-rate} for both starting points are stable. This fits \cref{thm:REG}. For the starting guess $u_0 = \bar u$, the empirical convergence rate $R^\delta$ is not greater than $0.4$ as $\delta$ is small enough. This agrees with the convergence rate $\mathcal{O}(\sqrt{\delta})$ expected from the classical source condition $u^\dag-u_0\in \mathcal R\left[(F'(u^\dag)^*F'(u^\dag)^{1/2} \right]$.
\begin{table}[!t]
    \caption{regularization property for $\alpha_0=1$, $r=0.5$, $\beta = 0.005$, $\tau = 1.5$: noise level $\delta$; stopping index $N_\delta$; logarithmic rate {$LR_\delta$} from \eqref{eq:log-rate}; relative error $E^\delta$ from \eqref{rel-error-noise}; empirical convergence rate $R^\delta$ from \eqref{conv-rate}; final Tikhonov parameter $\alpha_{N_\delta}$}
    \label{tab:noise_diff_guess}
    \centering
    \begin{tabular}{%
            S[table-format=1.2e-2,scientific-notation=true,round-mode=places,round-precision=2]
            S[table-format=2]
            S[table-format=1.1,scientific-notation=true,round-mode=places,round-precision=1]
            S[table-format=1.2e-1,scientific-notation=true,round-mode=places,round-precision=2]
            S[table-format=1.1e-1,scientific-notation=true,round-mode=places,round-precision=1]
            S[table-format=2]
            S[table-format=1.1,round-mode=places,round-precision=1]
            S[table-format=1.2e-1,scientific-notation=true,round-mode=places,round-precision=2]
            S[table-format=1.1,round-mode=places,round-precision=1] 
            S[table-format=1.1e-1,scientific-notation=true,round-mode=places,round-precision=1] 
        }
        \toprule
        & \multicolumn{4}{c}{$u_0\equiv 0$} & \multicolumn{5}{c}{$u_0=\bar u$} \\
        \cmidrule(lr){2-5} \cmidrule(lr){6-10}
        {$\delta$} & {$N_\delta$} & {$LR_\delta$}      & {$E^\delta$}         & {$\alpha_{N_\delta}$} & {$N_\delta$} & {$LR_\delta$}      & {$E^\delta$}           & {$R^\delta$}        & {$\alpha_{N_\delta}$} \\
        \midrule                                                                                                                        
        1.056e-02 & 12           & 2.161944383663075  & 0.4696226322801729   & 4.096e-09             & 14           & 2.522268447606921  & 0.15489855288625703    & 2.2599960301113002  & 1.6384e-10            \\
        1.059e-03 & 16           & 2.0381843728706475 & 0.2360039884930222   & 6.5536e-12            & 15           & 1.910797849566232  & 0.020684346280542436   & 0.9528977241146434  & 3.2768e-11            \\
        1.058e-04 & 20           & 1.9697284944764908 & 0.14443145223412981  & 1.048576e-14          & 16           & 1.5757827955811925 & 0.001572329646729303   & 0.22917103233800698 & 6.5536e-12            \\
        1.054e-05 & 25           & 2.0064072742036223 & 0.07326802350440055  & 3.3554432e-18         & 17           & 1.364356946458463  & 0.0003573843573686938  & 0.1650364174000802  & 1.31072e-12           \\
        1.057e-06 & 30           & 2.0325427962934035 & 0.035547482392459896 & 1.073741824e-21       & 18           & 1.2195256777760421 & 0.00018491591808791902 & 0.2696521385334294  & 2.62144e-13           \\
        1.059e-07 & 34           & 1.992900892328594  & 0.02769122585195216  & 1.7179869184e-24      & 21           & 1.2309093746735433 & 6.466176656871649e-05  & 0.29790120197753356 & 2.097152e-15          \\
        \bottomrule
    \end{tabular}
\end{table}

To show the dependence on parameter $\beta$ of the performance of the Bouligand--Levenberg--Marquardt iteration, we summarize in
\cref{tab:noise_diff_beta} the results obtained for $\beta \in \{ 0, 0.15,0.3 \}$, $\alpha_0=1$, $r= 0.5$, $\tau = 1.5$, and $u_0 = \bar u$. 
\Cref{tab:noise_diff_beta} indicates that the stopping index seems not to be significantly influenced by the parameter $\beta$. However, it is not surprising that the relative error $E^\delta$ increases with respect to $\beta$ since $|\{y^\dag = 0\}| \to 0$ as $\beta \to 0^+$. 
\begin{table}[!t]
    \caption{regularization property for $\alpha_0=1$, $r=0.5$, $\tau = 1.5$, $u_0 = \bar u$: noise level $\delta$; stopping index $N_\delta=N(\delta,y^\delta)$; relative error $E^\delta$ from \eqref{rel-error-noise}}
    \label{tab:noise_diff_beta}
    \centering
    \begin{tabular}{%
            S[table-format=1.2e-2,scientific-notation=true,round-mode=places,round-precision=2]
            S[table-format=2]
            S[table-format=1.2e-1,scientific-notation=true,round-mode=places,round-precision=2]
            S[table-format=2]
            S[table-format=1.2e-1,scientific-notation=true,round-mode=places,round-precision=2]
            S[table-format=2]
            S[table-format=1.2e-1,scientific-notation=true,round-mode=places,round-precision=2]
        }
        \toprule
        & \multicolumn{2}{c}{$\beta = 0$} & \multicolumn{2}{c}{$\beta = 0.15$} & \multicolumn{2}{c}{$\beta = 0.3$} \\
        \cmidrule(lr){2-3} \cmidrule(lr){4-5} \cmidrule(lr){6-7}
        {$\delta$} & {$N_\delta$} & {$E^\delta$}          & {$N_\delta$} & {$E^\delta$}          & {$N_\delta$} & {$E^\delta$}         \\
        \midrule
        1.059e-01  & 11           & 3.096603267432972     & 11           & 11.322106625468484    & 11           & 64.0850013272603     \\
        1.058e-02  & 14           & 0.14974025449860318   & 14           & 0.5467991241900994    & 14           & 3.101289351522812    \\
        1.057e-03  & 15           & 0.01962664791836133   & 15           & 0.07108813191039302   & 15           & 0.4105428851642272   \\
        1.059e-04  & 16           & 0.0015019252427895206 & 16           & 0.005749644212339104  & 16           & 0.03521304813871039  \\
        1.057e-05  & 17           & 0.0003431288402027048 & 17           & 0.00355545603312123   & 17           & 0.01024127740176486  \\
        1.059e-06  & 18           & 0.000181345040429809  & 19           & 0.0037424266649199052 & 18           & 0.005294979116644965 \\
        \bottomrule
    \end{tabular}
\end{table}

Finally, the stopping index as well as the total CPU time (in seconds) of the proposed Bouligand--Levenberg--Marquardt (BLM) iteration and of the Bouligand--Landweber (BL) iteration from \cite{ClasonNhu2018} are compared in \cref{fig:LM_BL_comparision}. Recall that the BL iteration is defined by
\begin{equation}
    \label{eq:modified-landweber} 
    u_{n+1}^\delta = u_n^\delta + w_n G_{u_n^\delta}^* \left(y^\delta - F(u_n^\delta) \right), \quad n \geq 0
\end{equation}
with parameter $w_n >0$ 
and is terminated via the discrepancy principle \eqref{eq:disc2}. To compare the numerical results, we set $\alpha_0 = 1$, $r= 0.5$, $\beta = 0.005$, $\tau = 1.5$, $w_n = (2-2\mu)/\bar L^2$ for all $n \geq 0$ with $\mu = 0.1$ and $\bar L = 0.05$. \cref{fig:LM_BL_comparision}
shows the stopping index of the two iterative methods versus the noise level $\delta$ for both $u_0 = \bar u$ and $u_0 \equiv 0$. \Cref{fig:stop-index-source,fig:stop-index-zero} indicate that for the BLM iteration, in both cases $N_\delta = O(1 + |\log(\delta)|)$ as $\delta \to 0$, as expected from \cref{thm:REG}.
On the other hand, \cref{fig:stop-index-source-landweber,fig:stop-index-zero-landweber} show that for the BL iteration, $N_\delta = O(\delta^{-1})$ for $u_0 = \bar u$ and $N_\delta = O(\delta^{-2})$ for $u_0 \equiv 0$ as $\delta\to 0$. 
As also shown in these figures, the total CPU time to run each method is almost directly proportional to their stopping indices (approximately $52$ seconds per step for the BLM iteration and $16$ seconds per for the BL iteration, corresponding to the size of \eqref{eq:coor-optimality} compared to that of the discretization of \eqref{eq:modified-landweber}). For $u_0 \equiv 0$ and $\delta \approx 5 \cdot 10^{-5}$, the total CPU time of the BLM iteration is only $1136$ seconds while that of the BL iteration is nearly $41337$ seconds. Similarly, for $u_0 = \bar u$ and $\delta \approx 10^{-7}$, it takes $1087$ seconds for the BLM iteration and approximately $18054$ seconds for the BL iteration to terminate.
Hence even though the cost of each step of the two iterations is different, the BLM iteration is significantly faster also in terms of CPU time for small values of $\delta$. 
\begin{figure}[t]
    \centering
    \begin{subfigure}[b]{0.495\textwidth}
        \centering
        \begin{tikzpicture}[baseline]
    \begin{axis}[
        width=\textwidth,
        xmin=5.e-05, xmax=1.e-02,
        xmode=log,
        x dir=reverse,
        ytick=\empty,		
        ymin=11,
        extra y tick style={grid=major},
        extra y ticks={12,22},
        extra y tick labels={$12$,$22$},
        extra y tick style={DarkBlue},
        legend style={legend pos=north west,draw=none,font=\scriptsize}
        ]
        \addplot [color= DarkBlue,solid,line width=1pt]
            table[row sep=crcr]{%
                1.05642e-02		12\\
                5.29113e-03		13\\
                1.06026e-03		16\\
                5.30585e-04		17\\
                1.05788e-04		20\\
                5.29167e-05 	22\\
            };
        \addlegendentry{$N_\delta$};
        \addplot[%
            domain=5.e-05:1.e-02,
            samples=100,
            color= DarkBlue,dashed
            ]
            {
                (25-8.5*ln(x))/5
            };
        \addlegendentry{$\mathcal{O}(\log\delta)$};
        \addlegendimage{/pgfplots/refstyle=plot_cputime}\addlegendentry{time [s]};
    \end{axis} 
    \begin{axis}[
    	width=\textwidth,
    	axis y line*=right,
    	xmin=5.e-05, xmax=1.e-02,
        ymin=600,
    	axis x line=none,
    	xmode=log,
    	x dir=reverse,
    	ytick=\empty,
    	extra y tick style={grid=major},
    	extra y ticks={615,1136},
    	extra y tick labels={$615$,$1136$},
        extra y tick style={DarkRed},
        set layers,axis background
    	]
    	\addplot[DarkRed, line width=1pt]
	    	table[row sep=crcr]{%
	    		1.05642e-02		614.8189404010773\\
	    		5.29113e-03		658.4350681304932\\
	    		1.06026e-03		808.4647862911224\\
	    		5.30585e-04		866.128849029541\\
	    		1.05788e-04		1018.3742916584015\\
	    		5.29167e-05 	1136.1994893550873\\
	    	}; \label{plot_cputime}
    \end{axis}
\end{tikzpicture}%
        \caption{$u_0=0$, BLM}
        \label{fig:stop-index-zero}
    \end{subfigure}
    \begin{subfigure}[b]{0.495\textwidth}
        \centering
        \begin{tikzpicture}[baseline]
    \begin{axis}[
        width=\textwidth,
        xmin=5.e-05, xmax=1.e-02,
        xmode=log,
        x dir=reverse,
        ytick=\empty,
        extra y tick style={grid=major},
        extra y ticks={3,2634},
        extra y tick labels={$3$,$2634$},
        extra y tick style={DarkBlue},
        legend style={legend pos=north west,draw=none,font=\scriptsize}
        ]
        \addplot [color= DarkBlue,solid,line width=1pt]
            table[row sep=crcr]{%
                1.05642e-02		3\\
                5.29113e-03		6\\
                1.06026e-03		36\\
                5.30585e-04		86\\
                1.05788e-04		921\\
                5.29167e-05 	2634\\					
            };
        \addlegendentry{$N_\delta$};
        \addplot[%
            domain=5.e-05:1.e-02,
            samples=100,
            color=DarkBlue,dashed
            ]
            {
                0.0000100/(x*x)
            };
            \addlegendentry{$\mathcal{O}(\delta^{-2})$};
            \addlegendimage{/pgfplots/refstyle=plot_cputime_BL}\addlegendentry{time [s]};
    \end{axis}
    \begin{axis}[
    	width=\textwidth,
    	axis y line*=right,
    	axis x line=none,
    	xmode=log,
    	x dir=reverse,
    	ytick=\empty,
    	extra y tick style={grid=major},
    	extra y ticks={47,41337},
    	extra y tick labels={$47$,$41337$},
        extra y tick style={DarkRed},
        set layers,axis background
    	]
    	\addplot[DarkRed, line width=1pt]
    		table[row sep=crcr]{%
    			1.05642e-02		46.7875554561615\\
    			5.29113e-03		82.87854647636414\\
    			1.06026e-03		480.6225845813751\\
    			5.30585e-04		1164.9991130828857\\
    			1.05788e-04		14930.66363978386\\
    			5.29167e-05 	41336.55525422096\\
     	}; \label{plot_cputime_BL}
    \end{axis}
\end{tikzpicture} 
        \caption{$u_0=0$, BL}
        \label{fig:stop-index-zero-landweber}
    \end{subfigure}
    \\[1ex]
    \begin{subfigure}[b]{0.495\textwidth}
        \centering
        \begin{tikzpicture}[baseline]
    \begin{axis}[
        width=\textwidth,
        xmin=1.e-07, xmax=1.e-1,
        xmode=log,
        x dir=reverse,
        ymin=10,
        ytick=\empty,		
        extra y tick style={grid=major},
        extra y ticks={11,21},
        extra y tick labels={$11$,$21$},
        extra y tick style={DarkBlue},
        legend style={legend pos=north west,draw=none,font=\scriptsize}
        ]
        \addplot [color=DarkBlue,solid,line width=1pt]
            table[row sep=crcr]{%
                1.05829e-01		11\\
                5.29420e-02		12\\
                1.05976e-02		14\\
                5.29637e-03 	14\\
                1.05863e-03 	15\\
                5.29610e-04 	16\\
                1.06223e-04		16\\
                5.29698e-05		16\\
                1.05916e-05		17\\
                5.29109e-06		17\\
                1.05700e-06		18\\
                5.28392e-07		19\\
                1.05870e-07		21\\
            };
        \addlegendentry{$N_\delta$};
        \addplot[%
            domain=1.e-07:1.e-01,
            samples=100,
            color=DarkBlue,dashed
            ]
            {
                (37-4.7*ln(x))/5
            };
        \addlegendentry{$\mathcal{O}(\log\delta)$};
        \addlegendimage{/pgfplots/refstyle=plot_cputime_source}\addlegendentry{time [s]};
    \end{axis}
    \begin{axis}[
    	width=\textwidth,
    	axis y line*=right,
    	xmin=1.e-07, xmax=1.e-1,
        ymin=500,
    	axis x line=none,
    	xmode=log,
    	x dir=reverse,
    	ytick=\empty,
    	extra y tick style={grid=major},
    	extra y ticks={570,1087},
    	extra y tick labels={$570$,$1087$},
        extra y tick style={DarkRed},
        set layers,axis background
    	]
    	\addplot[DarkRed,line width=1pt]
    		table[row sep=crcr]{%
	    		1.05829e-01		569.8918704986572\\
	    		5.29420e-02		620.6904604434967\\
	    		1.05976e-02		723.7786290645599\\
	    		5.29637e-03 	728.0395710468292\\
	    		1.05863e-03 	782.2865355014801\\
	    		5.29610e-04 	826.6376779079437\\
	    		1.06223e-04		842.3206787109375\\
	    		5.29698e-05		835.6096479892731\\
	    		1.05916e-05		883.9892916679382\\
	    		5.29109e-06		872.0093109607697\\
	    		1.05700e-06		944.7616450786591\\
	    		5.28392e-07		988.5718698501587\\
	    		1.05870e-07		1087.2021219730377\\
    	}; \label{plot_cputime_source}
	\end{axis}
\end{tikzpicture}%
        \caption{$u_0=\bar u$, BLM}
        \label{fig:stop-index-source}
    \end{subfigure}
    \begin{subfigure}[b]{0.495\textwidth}
        \centering
        \begin{tikzpicture}[baseline]
    \begin{axis}[
        width=\textwidth,
        xmin=1.e-07, xmax=1.e-1,
        xmode=log,
        x dir=reverse,
        ytick=\empty,
        extra y tick style={grid=major},
        extra y ticks={2,1105},
        extra y tick labels={$2$,$1105$},
        extra y tick style={DarkBlue},
        legend style={legend pos=north west,draw=none,font=\scriptsize}
        ]
        \addplot [color= DarkBlue,solid,line width=1pt]
            table[row sep=crcr]{%
                1.05829e-01		2\\
                5.29420e-02		4\\
                1.05976e-02		9\\
                5.29637e-03 	11\\
                1.05863e-03 	16\\
                5.29610e-04 	18\\
                1.06223e-04		22\\
                5.29698e-05		25\\
                1.05916e-05		42\\
                5.29109e-06		55\\
                1.05700e-06		182\\
                5.28392e-07		271\\
                1.05870e-07		1105\\
            };
        \addlegendentry{$N_\delta$};
        \addplot[%
            domain=1.e-07:1.e-01,
            samples=100,
            color=DarkBlue,dashed
            ]
            {
                0.00015*1/x
            };
        \addlegendentry{$\mathcal{O}(\delta^{-1})$};
        \addlegendimage{/pgfplots/refstyle=plot_cputime_source_BL}\addlegendentry{time [s]};
    \end{axis}
    \begin{axis}[
    	width=\textwidth,
    	axis y line*=right,
    	xmin=1.e-07, xmax=1.e-1,
    	axis x line=none,
    	xmode=log,
    	x dir=reverse,
    	ytick=\empty,
    	extra y tick style={grid=major},
    	extra y ticks={40,18054},
    	extra y tick labels={$40$,$18054$},
        extra y tick style={DarkRed},
        set layers,axis background
    	]
    	\addplot[DarkRed, line width=1pt]
    		table[row sep=crcr]{%
    			1.05829e-01		40.15858340263367\\
    			5.29420e-02		62.981141328811646\\
    			1.05976e-02		138.713928937912 \\
    			5.29637e-03 	165.66338896751404\\
    			1.05863e-03 	241.356351852417\\
    			5.29610e-04 	269.1167061328888\\
    			1.06223e-04		351.0504319667816\\
    			5.29698e-05		379.83857440948486\\
    			1.05916e-05		670.0487039089203\\
    			5.29109e-06		866.593493938446\\
    			1.05700e-06		2965.012938976288\\
    			5.28392e-07		4360.244261264801\\
    			1.05870e-07		18054.36757493019\\
    	}; \label{plot_cputime_source_BL}
    \end{axis}
\end{tikzpicture} 
        \caption{$u_0=\bar u$, BL}
        \label{fig:stop-index-source-landweber}
    \end{subfigure}
    \caption{comparison of stopping index $N_\delta$ and total CPU time (in seconds) for Bouligand--Levenberg--Marquardt (BLM) and Bouligand--Landweber (BL) iterations}
    \label{fig:LM_BL_comparision} 
\end{figure}
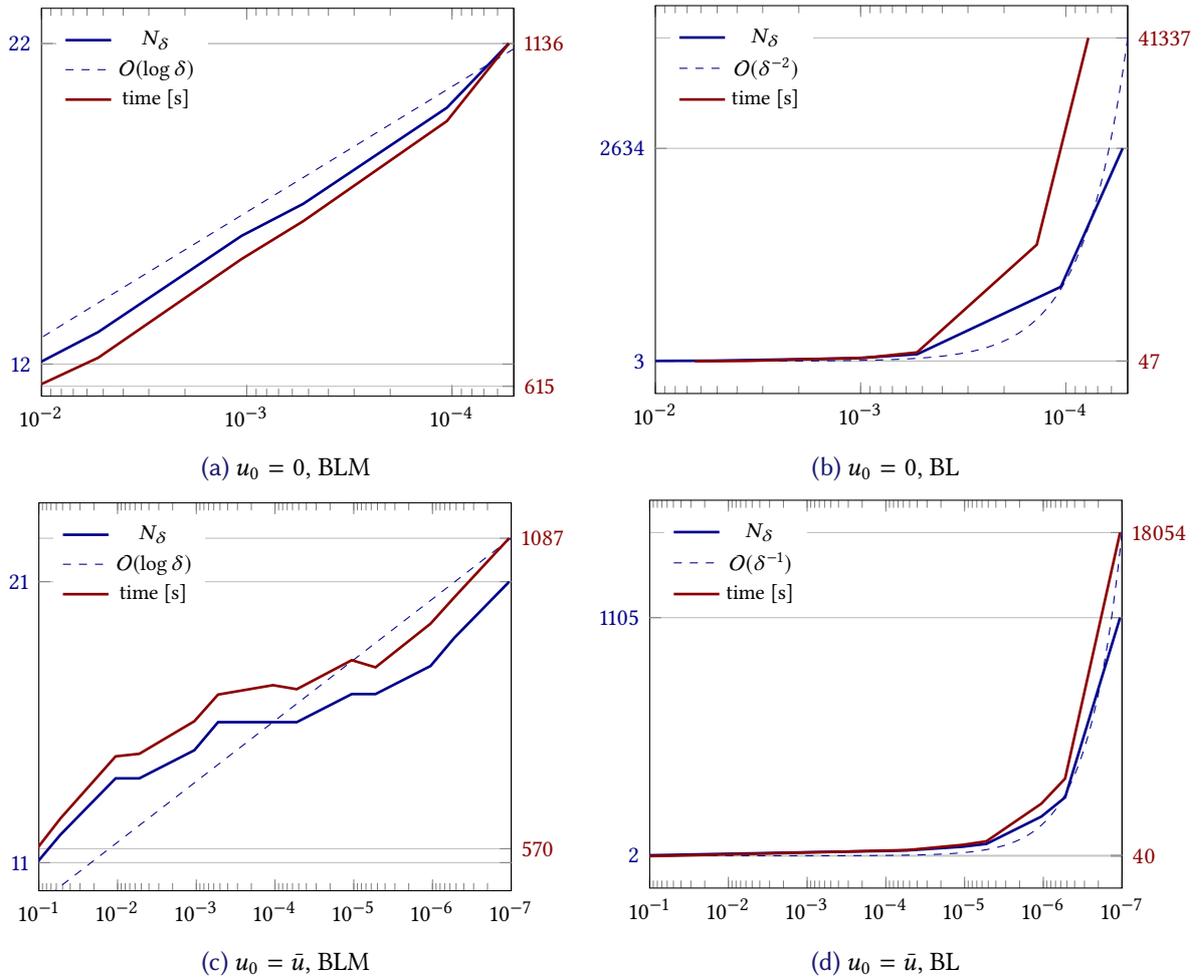

\section{Conclusion} \label{sec:Con}

We have proposed a novel Newton-type regularization method for non-smooth ill-posed inverse problems that extends the classical Levenberg--Marquardt iteration. Using a family of bounded operators $\{G_u\}$ to replace the Fréchet derivative in the classical Levenberg--Marquardt iteration, we proved under a generalized tangential cone condition the asymptotic stability of the iterates and from this derived the regularization property of the iteration. In particular, when considering ill-posed inverse problem where the forward operator corresponds to the solution of a non-smooth semilinear elliptic PDE, we can take $G_u$ from the Bouligand subdifferential of the forward operator. If the non-differentiability of the forward mapping is sufficiently ``weak'' at the exact solution, these operators satisfy the required assumptions, and the resulting Bouligand--Levenberg--Marquardt iteration thus provides a convergent regularization method. As the numerical example illustrates, this iteration requires significantly less iterations and can be much faster than first-order methods such as the Bouligand--Landweber iteration from \cite{ClasonNhu2018}.

This work can be extended in several directions. First, it would be interesting to derive convergence rates under the generalized source condition \eqref{eq:source-cond}. Of particular interest would be the extension of the proposed iteration for non-smooth ill-posed inverse problems with additional constraints such as non-negativity of the unknown parameter.
Finally, similar non-smooth extensions of other Newton-type methods such as the iteratively regularized Gauss-Newton method could be derived.

\appendix

\section{Auxiliary lemmas} \label{sec:app}

This section provides some estimates on the sequence of parameters defined by \eqref{eq:Lag-para} and on bounded linear operators between Hilbert spaces.
\begin{lemma}[\protect{\cite{Jin2010}}] \label{lem:sum-esti}
    Let $\{\alpha_n \}$ be defined via \eqref{eq:Lag-para}. Then there hold for all $k \geq 0$
    \begin{align*}
        \left(\sum_{j = 0}^k \alpha_j^{-1} \right)^{-1} &\leq c_0^2 \alpha_{k+1},&
        \sum_{m=0}^k \alpha_m^{-1/2} \left(\sum_{j = m}^k \alpha_j^{-1} \right)^{-1/2} &\leq c_1, &
        \sum_{m=0}^k \alpha_m^{-1/2} \left(\sum_{j = m}^k \alpha_j^{-1} \right)^{-1} &\leq c_2\alpha_{k+1}^{1/2} ,\\
        && \sum_{m=0}^k \alpha_m^{-1} \left(\sum_{j = m}^k \alpha_j^{-1} \right)^{-1/2} &\leq c_3\alpha_{k+1}^{-1/2},&
        \sum_{m=0}^k \alpha_m^{-1} \left(\sum_{j = m}^k \alpha_j^{-1} \right)^{-1} &\leq c_4,
    \end{align*}
    where the constants $c_i$, $i = 0,\ldots,4$, are defined in \eqref{eq:constants-c}.
\end{lemma}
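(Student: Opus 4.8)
The plan is to reduce every sum to an explicit geometric series, exploiting that $\alpha_j^{-1} = \alpha_0^{-1} r^{-j}$ grows geometrically in $j$ with ratio $r^{-1}>1$, so that each inner sum is dominated by its largest term. The single inequality that drives all five bounds is the trivial lower bound obtained by retaining only the last summand,
\begin{equation*}
    \sum_{j=m}^k \alpha_j^{-1} \geq \alpha_k^{-1},
    \qquad\text{equivalently}\qquad
    \left(\sum_{j=m}^k \alpha_j^{-1}\right)^{-1} \leq \alpha_k \qquad (0 \leq m \leq k),
\end{equation*}
which controls the inner factor uniformly in $m$ and removes the dependence on the summation index.

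First I would record the two elementary outer sums. Writing $\alpha_m^{-1/2} = \alpha_0^{-1/2} r^{-m/2}$ and $\alpha_m^{-1} = \alpha_0^{-1} r^{-m}$ and factoring out the largest term (at $m=k$), the geometric series give $\sum_{m=0}^k r^{-m/2} \leq r^{-k/2}/(1-\sqrt r)$ and $\sum_{m=0}^k r^{-m} \leq r^{-k}/(1-r)$. These, together with the inner bound above, reduce each assertion to a single monomial in $\alpha_0$ and $r$.

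Then I would dispatch the five inequalities in turn. In each case one substitutes $\left(\sum_{j=m}^k \alpha_j^{-1}\right)^{-1} \leq \alpha_k$, or its square root $\leq \alpha_k^{1/2}$ when the exponent is $-1/2$, pulls the resulting power of $\alpha_k$ out of the outer sum, applies the appropriate geometric bound, and simplifies. The bookkeeping collapses cleanly: in the second and fifth estimates the powers of $\alpha_0$ and $r$ cancel to leave the pure constants $c_1 = (1-\sqrt r)^{-1}$ and $c_4 = (1-r)^{-1}$; in the third and fourth a residual $\alpha_k^{1/2}$ resp.\ $\alpha_k^{-1/2}$ survives, and rewriting $\alpha_k = \alpha_{k+1}/r$ supplies the extra factors $r^{-1/2}$ and $r^{1/2}$ that produce $c_2 = (\sqrt r(1-\sqrt r))^{-1}$ and $c_3 = \sqrt r/(1-r)$; the first estimate is immediate from $\alpha_k = \alpha_{k+1}/r = c_0^2\,\alpha_{k+1}$, since $c_0^2 = 1/r$.

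I do not anticipate a genuine obstacle, as all sums involved are geometric and the bounds are one-line consequences of keeping the dominant term. The only point requiring care is the exponent bookkeeping: tracking whether the $-1$ or the $-1/2$ power of the inner sum is needed in a given line, and converting the leftover powers of $r$ consistently between $\alpha_k$ and $\alpha_{k+1}$, so that the emerging constants match the definitions in \eqref{eq:constants-c} exactly.
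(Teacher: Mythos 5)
Your proof is correct: the single bound $\bigl(\sum_{j=m}^k \alpha_j^{-1}\bigr)^{-1} \leq \alpha_k$, combined with the geometric sums $\sum_{m=0}^k r^{-m/2} \leq r^{-k/2}/(1-\sqrt{r})$ and $\sum_{m=0}^k r^{-m} \leq r^{-k}/(1-r)$ and the conversion $\alpha_k = \alpha_{k+1}/r$, reproduces each of the five estimates with exactly the constants $c_0,\ldots,c_4$ of \eqref{eq:constants-c}. The paper itself gives no proof (it cites \cite{Jin2010}), and your dominant-term argument is precisely the standard one behind the cited result, so there is nothing to add.
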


The next lemma provides some more estimates on sequence $\{\alpha_n \}$ defined via \eqref{eq:Lag-para} with exponent $\nu \in [0,1/2)$. Its proof is standard and thus is omitted.
\begin{lemma} \label{lem:sum-nu}
    Let $\{\alpha_n \}$ be defined via \eqref{eq:Lag-para} and let $0 \leq \nu < \frac{1}{2}$. Then there hold for all $k \geq 0$
    \begin{equation} \label{eq:K0-esti}
        \sum_{m=0}^k \alpha_m^{\nu -1/2} \left(\sum_{j = m}^k \alpha_j^{-1} \right)^{-1/2} \leq K_0(r,\nu)\alpha_{k+1}^{\nu}
    \end{equation} 
    and
    \begin{equation} \label{eq:K1-esti}
        \sum_{m=0}^k \alpha_m^{\nu-1/2} \left(\sum_{j = m}^k \alpha_j^{-1} \right)^{-1} \leq K_1(r,\nu)\alpha_{k+1}^{\nu +1/2}
    \end{equation}
    with $K_0(r,\nu)$ and $K_1(r,\nu)$ defined in \eqref{eq:constants-K}.
\end{lemma}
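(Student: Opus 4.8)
The plan is to exploit the explicit geometric form $\alpha_j = \alpha_0 r^j$ directly; both \eqref{eq:K0-esti} and \eqref{eq:K1-esti} then reduce to summing a single geometric series, so there is no genuine obstacle beyond careful bookkeeping of the exponents of $r$. The one structural point to respect is that the inner factor $\bigl(\sum_{j=m}^k \alpha_j^{-1}\bigr)^{-p}$ (with $p = \tfrac12$ or $1$) appears with a negative power, so to bound the full sum \emph{from above} I must bound the inner sum $\sum_{j=m}^k \alpha_j^{-1}$ \emph{from below}. Since $\alpha_j^{-1} = \alpha_0^{-1} r^{-j}$ is increasing in $j$ (because $r<1$), the crudest such bound—keeping only the top term $j=k$—already suffices: $\sum_{j=m}^k \alpha_j^{-1} \ge \alpha_k^{-1} = \alpha_0^{-1} r^{-k}$. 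Hence $\bigl(\sum_{j=m}^k \alpha_j^{-1}\bigr)^{-1/2} \le \alpha_0^{1/2} r^{k/2}$ and $\bigl(\sum_{j=m}^k \alpha_j^{-1}\bigr)^{-1} \le \alpha_0 r^{k}$.

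First I would treat \eqref{eq:K0-esti}. Inserting the bound above and writing $\alpha_m^{\nu-1/2} = \alpha_0^{\nu-1/2} r^{m(\nu-1/2)}$ gives, after pulling the $m$-independent factors out, the estimate $\sum_{m=0}^k \alpha_m^{\nu-1/2}\bigl(\sum_{j=m}^k \alpha_j^{-1}\bigr)^{-1/2} \le \alpha_0^{\nu}\, r^{k/2} \sum_{m=0}^k \bigl(r^{\nu-1/2}\bigr)^{m}$. Here the hypothesis $0 \le \nu < \tfrac12$ is essential: it forces $r^{\nu-1/2} > 1$, so the geometric series has ratio exceeding $1$ and can be bounded by $\sum_{m=0}^k (r^{\nu-1/2})^m \le \frac{(r^{\nu-1/2})^{k+1}}{r^{\nu-1/2}-1}$ (this is also where the positive denominator $r^{\nu-1/2}-1$ in $K_0,K_1$ comes from). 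Collecting the powers of $r$, the total exponent is $\tfrac{k}{2} + (k+1)(\nu-\tfrac12) = (k+1)\nu - \tfrac12$, so that $r^{k/2}(r^{\nu-1/2})^{k+1} = r^{-1/2}(r^{k+1})^{\nu}$, and since $\alpha_{k+1}^\nu = \alpha_0^\nu (r^{k+1})^\nu$ the right-hand side equals exactly $\frac{r^{-1/2}}{r^{\nu-1/2}-1}\,\alpha_{k+1}^\nu = K_0(r,\nu)\,\alpha_{k+1}^\nu$.

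The estimate \eqref{eq:K1-esti} follows by the identical computation with the exponent $-1$ in place of $-\tfrac12$. Using $\bigl(\sum_{j=m}^k \alpha_j^{-1}\bigr)^{-1} \le \alpha_0 r^{k}$ one arrives at $\alpha_0^{\nu+1/2}\, r^{k}\sum_{m=0}^k (r^{\nu-1/2})^m$, and the same geometric bound leaves a total $r$-exponent of $k + (k+1)(\nu-\tfrac12) = (k+1)(\nu+\tfrac12) - 1$; factoring out $\alpha_{k+1}^{\nu+1/2} = \alpha_0^{\nu+1/2}(r^{k+1})^{\nu+1/2}$ then yields the prefactor $\frac{r^{-1}}{r^{\nu-1/2}-1} = K_1(r,\nu)$. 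I expect no serious difficulty anywhere: the only care needed is in tracking the three contributions to the exponent of $r$ (from $\alpha_m^{\nu-1/2}$, from the bound on the inner sum, and from the geometric series) and in verifying the sign condition $r^{\nu-1/2}>1$ that makes the series manipulation and the constants $K_0,K_1$ legitimate.
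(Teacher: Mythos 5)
Your proposal is correct, and since the paper explicitly omits this proof as ``standard'', your elementary geometric-series computation is exactly the argument intended. All steps check out: the lower bound $\sum_{j=m}^k \alpha_j^{-1} \geq \alpha_k^{-1}$ (valid since all terms are positive), the geometric-series estimate $\sum_{m=0}^k q^m \leq \frac{q^{k+1}}{q-1}$ with ratio $q = r^{\nu-1/2} > 1$ (which is where $\nu < \tfrac12$ enters), and the exponent bookkeeping $\tfrac{k}{2} + (k+1)(\nu - \tfrac12) = (k+1)\nu - \tfrac12$ and $k + (k+1)(\nu-\tfrac12) = (k+1)(\nu+\tfrac12) - 1$ reproduce the constants $K_0(r,\nu)$ and $K_1(r,\nu)$ from \eqref{eq:constants-K} exactly --- consistent also with the degenerate case $\nu = 0$, where $K_0(r,0) = \frac{1}{\sqrt{r}(r^{-1/2}-1)} = \frac{1}{1-\sqrt{r}} = c_1$ recovers the corresponding estimate in \cref{lem:sum-esti}.
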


The next lemmas give useful estimates of a bounded linear operator between Hilbert spaces and generalize the corresponding results in \cite{HankeGroetsch1998}. Their proofs are based on the spectral theory and functional calculus of self-adjoint operators; see, e.g. \cite{Engl1996,Schmudgen2012}.
\begin{lemma}[{\cite[Lem.~2]{Jin2010}}] \label{lem:spectral-nu}
    Let $\{ \alpha_k \}$ be a sequence of positive numbers and let $T: H_1 \to H_2$ be a bounded linear operator between Hilbert spaces. Then, for any $0 \leq \nu \leq 1$ and any integers $0 \leq m \leq l$, there holds
    \begin{equation*}
        \left \| \prod_{j=m}^l \alpha_j \left(\alpha_j I + T^*T \right)^{-1} (T^*T)^{\nu} \right\|_{\Linop(H_1)} \leq \left(\sum_{j=m}^{l} \alpha_j^{-1} \right)^{-\nu} .
    \end{equation*} 
\end{lemma}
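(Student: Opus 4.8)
The plan is to reduce the operator-norm estimate to a scalar supremum over the spectrum by means of the functional calculus for the self-adjoint operator $A := T^*T$, and then to verify the resulting scalar inequality by an elementary algebraic argument.

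First I would observe that $A = T^*T \in \Linop(H_1)$ is self-adjoint and positive semidefinite, so its spectrum satisfies $\sigma(A) \subset [0, \norm{A}] \subset [0,\infty)$. The operator in question is $f(A)$, where $f$ is the continuous function
\begin{equation*}
    f(\lambda) := \lambda^\nu \prod_{j=m}^l \frac{\alpha_j}{\alpha_j + \lambda}, \qquad \lambda \geq 0,
\end{equation*}
and $A^\nu$ is the usual fractional power defined through the functional calculus of the nonnegative self-adjoint operator $A$. Since $\sigma(A)$ is compact and $f$ is continuous, the spectral theorem yields
\begin{equation*}
    \left\| \prod_{j=m}^l \alpha_j \left(\alpha_j I + A \right)^{-1} A^\nu \right\|_{\Linop(H_1)} = \sup_{\lambda \in \sigma(A)} |f(\lambda)| \leq \sup_{\lambda \geq 0} f(\lambda),
\end{equation*}
using that $f \geq 0$. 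It therefore suffices to prove the scalar bound $f(\lambda) \leq S^{-\nu}$ for all $\lambda \geq 0$, where $S := \sum_{j=m}^l \alpha_j^{-1}$.

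The core of the argument is this scalar inequality. Rearranging, the desired bound $f(\lambda) \leq S^{-\nu}$ is equivalent to $(\lambda S)^\nu \leq \prod_{j=m}^l (1 + \lambda \alpha_j^{-1})$. Writing $x_j := \lambda\alpha_j^{-1} \geq 0$, so that $\lambda S = \sum_{j=m}^l x_j$, I would reduce matters to showing that for all nonnegative $x_j$,
\begin{equation*}
    \left(\sum_{j=m}^l x_j\right)^\nu \leq \left(1 + \sum_{j=m}^l x_j\right)^\nu \leq 1 + \sum_{j=m}^l x_j \leq \prod_{j=m}^l (1+x_j).
\end{equation*}
Here the first step uses monotonicity of $t\mapsto t^\nu$, the second uses $(1+s)^\nu \leq 1+s$ for $s \geq 0$ and $0\leq\nu\leq1$, and the last follows since expanding the product yields $1 + \sum_j x_j$ plus further nonnegative terms. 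The cases $\lambda=0$ and $\nu=0$ are immediate.

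The only genuine subtlety is making the reduction to the scalar supremum rigorous when $\nu$ is not an integer, since then $A^\nu$ must be interpreted through the functional calculus; this is where I would invoke the spectral representation of the nonnegative self-adjoint operator $A$ as in, e.g., \cite{Schmudgen2012}. Beyond that, the proof is entirely elementary, and I do not anticipate any serious obstacle.
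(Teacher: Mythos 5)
Your proof is correct and takes essentially the approach the paper intends: the paper itself quotes this lemma from \cite[Lem.~2]{Jin2010} without reproducing the proof, but its proof of the companion \cref{lem:spectral-half} proceeds exactly as you do, reducing the operator norm to a scalar supremum via the spectral calculus of $T^*T$ and then verifying the scalar bound through the elementary estimate $\prod_{j=m}^l (1+t\alpha_j^{-1}) \geq 1 + t\sum_{j=m}^l \alpha_j^{-1}$. Your chain $\left(\sum_j x_j\right)^\nu \leq \left(1+\sum_j x_j\right)^\nu \leq 1+\sum_j x_j \leq \prod_j (1+x_j)$ is valid for all $0\leq\nu\leq 1$ and $x_j\geq 0$, and coincides with the standard argument in the cited source.
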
 
This result can be improved for the specific case $\nu = \frac12$ to be sharp as shown by the choice $m=l$.
\begin{lemma} \label{lem:spectral-half}
    Let $\{ \alpha_k \}$ be a sequence of positive numbers and let $T: H_1 \to H_2$ be a bounded linear operator between Hilbert spaces. Then, for any integers $0 \leq m \leq l$, there holds
    \begin{equation*}
        \left \| \prod_{j=m}^l \alpha_j \left(\alpha_j I + T^*T \right)^{-1} T^* \right\|_{\Linop(H_2, H_1)} \leq \frac{1}{2} \left(\sum_{j=m}^{l} \alpha_j^{-1} \right)^{-1/2} .
    \end{equation*} 
\end{lemma}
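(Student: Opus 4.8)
The plan is to reduce the operator norm to a scalar supremum over the spectrum of $T^*T$ by means of the functional calculus for the bounded, self-adjoint, non-negative operator $T^*T$, and then to settle an elementary one-variable maximization. First I would write the operator in question as $S := \prod_{j=m}^l \alpha_j\left(\alpha_j I + T^*T\right)^{-1} T^* = g(T^*T)\,T^*$, where $g(\lambda) := \prod_{j=m}^l \frac{\alpha_j}{\alpha_j + \lambda}$ is a bounded continuous function on $[0,\infty)\supset \sigma(T^*T)$. Since $g$ is real-valued, $g(T^*T)$ is self-adjoint and commutes with $T^*T$, both being functions of $T^*T$.

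The key reduction is to pass through $S S^*$. Using $\norm{S}^2 = \norm{S S^*}_{\Linop(H_1)}$ and the commutation just noted, one computes
\begin{equation*}
    S S^* = g(T^*T)\,T^* T\,g(T^*T) = (T^*T)\,g(T^*T)^2,
\end{equation*}
so that, by the spectral theorem applied to the self-adjoint operator $(T^*T)g(T^*T)^2 = h(T^*T)$ with $h(\lambda) := \lambda\,g(\lambda)^2$,
\begin{equation*}
    \norm{S}^2 = \norm{h(T^*T)}_{\Linop(H_1)} = \sup_{\lambda \in \sigma(T^*T)} \lambda\,g(\lambda)^2 \leq \sup_{\lambda \geq 0} \lambda\,g(\lambda)^2.
\end{equation*}
This is the same mechanism underlying \cref{lem:spectral-nu}, so the justification of the functional calculus step can be kept brief.

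It then remains to bound the scalar quantity. Writing $s := \sum_{j=m}^l \alpha_j^{-1}$ and using the elementary inequality $\prod_{j=m}^l(1 + x_j) \geq 1 + \sum_{j=m}^l x_j$ for $x_j \geq 0$ applied with $x_j = \lambda/\alpha_j$, I obtain $g(\lambda) = \prod_{j=m}^l(1 + \lambda/\alpha_j)^{-1} \leq (1 + \lambda s)^{-1}$ for all $\lambda \geq 0$, and hence $\lambda\,g(\lambda)^2 \leq \frac{\lambda}{(1 + \lambda s)^2}$. A direct differentiation shows the right-hand side attains its maximum at $\lambda = 1/s$ with value $\frac{1}{4s}$. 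Consequently $\norm{S}^2 \leq \frac{1}{4s} = \frac14\left(\sum_{j=m}^l \alpha_j^{-1}\right)^{-1}$, which is exactly the asserted estimate; the sharpness for $m=l$ is immediate since then $g(\lambda) = \alpha_m/(\alpha_m+\lambda)$ and equality holds in the product inequality.

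The only genuinely non-routine points are the $S S^*$ reduction combined with the spectral-theorem identity for the norm (conceptually standard but worth stating precisely), and the product-to-sum inequality $\prod(1+x_j)\geq 1+\sum x_j$, which is what converts the product $g$ into the single factor $(1+\lambda s)^{-1}$; the final maximization of $\lambda/(1+\lambda s)^2$ is elementary. I therefore expect no real obstacle, with the main care needed in invoking the functional calculus cleanly.
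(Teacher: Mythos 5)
Your proof is correct and follows essentially the same route as the paper: both reduce the operator norm to the scalar bound $\sup_{t\ge 0}\sqrt{t}\,g(t)$ via the functional calculus for $T^*T$ and then exploit the product-to-sum inequality $\prod_j(1+t/\alpha_j)\ge 1+t\sum_j\alpha_j^{-1}$. The only cosmetic differences are that you justify the spectral reduction explicitly through $\norm{S}^2=\norm{SS^*}$ where the paper simply cites spectral theory, and you maximize $\lambda/(1+\lambda s)^2$ by differentiation where the paper uses the equivalent inequality $1+ts\ge 2\sqrt{ts}$.
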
 
\begin{proof}
    Set $t_0 := \norm{T}_{\Linop(H_1,H_2)}^2$ and define the continuous function 
    \begin{equation*}
        g(t) := \prod_{j=m}^l \alpha_j \left(\alpha_j + t \right)^{-1}, \quad t \geq 0.
    \end{equation*} 
    From the spectral theory of self-adjoint operators, see, e.g. \cite[Chap.~2]{Engl1996}, we have that
    \begin{equation}
        \label{eq:spec-est1}
        \left\|g(T^*T)T^* \right\|_{\Linop(H_2, H_1)} \leq \sup \left\{ \sqrt{t}|g(t)| : 0 \leq t \leq t_0 \right\}.
    \end{equation} 
    On the other hand, a simple computation and the Cauchy–Schwarz inequality give
    \begin{equation*}
        \prod_{j=m}^l \left(\alpha_j + t \right) \geq \prod_{j=m}^l \alpha_j \left(1 + t \sum_{j=m}^l \alpha_j^{-1} \right) \geq 2 \sqrt{t} \prod_{j=m}^l \alpha_j\left(\sum_{j=m}^{l} \alpha_j^{-1} \right)^{1/2},
    \end{equation*}
    which leads to
    \begin{equation*}
        \sqrt{t}|g(t)| \leq \frac{1}{2} \left(\sum_{j=m}^{l} \alpha_j^{-1} \right)^{-1/2}
    \end{equation*}
    for all $t \geq 0$.
    Combining this with \eqref{eq:spec-est1} yields the desired estimate.
\end{proof}
Finally, we have the following direct consequence of \cref{lem:spectral-nu,lem:spectral-half}.
\begin{corollary} \label{lem:spectral-halfnu}
    Let $\{ \alpha_k \}$ be a sequence of positive numbers and let $T: H_1 \to H_2$ be a bounded linear operator between Hilbert spaces. Then, for any $0 \leq \nu \leq \frac{1}{2}$ and any integers $0 \leq m \leq l$, there holds
    \begin{equation*}
        \left \| \prod_{j=m}^l \alpha_j \left(\alpha_j I + T^*T \right)^{-1} (T^*T)^{\nu}T^* \right\|_{\Linop(H_2,H_1)} \leq \left(\sum_{j=m}^{l} \alpha_j^{-1} \right)^{-\nu-1/2} .
    \end{equation*} 
\end{corollary}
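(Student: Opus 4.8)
The plan is to reduce the operator-norm estimate to a scalar supremum over the spectrum of the self-adjoint operator $T^*T$ via the functional calculus---exactly the mechanism used in the proof of \cref{lem:spectral-half}---and then to bound that supremum by the scalar inequality underlying \cref{lem:spectral-nu}, now invoked with the shifted exponent $\nu+\tfrac12$. In this way the corollary becomes a genuine consequence of both auxiliary results: the extra half-power is the gain from \cref{lem:spectral-half}, while the power estimate is supplied by \cref{lem:spectral-nu}.

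First I would abbreviate $A:=T^*T$ and $t_0:=\norm{T}_{\Linop(H_1,H_2)}^2$, and introduce the continuous function $g(t):=\prod_{j=m}^l \alpha_j\left(\alpha_j+t\right)^{-1}$ for $t\ge 0$, so that the operator in question is $g(A)A^\nu T^* = \psi(A)T^*$ with $\psi(t):=t^\nu g(t)$. Writing it in this form is legitimate because $g(A)$ and $A^\nu$ are functions of the single self-adjoint operator $A$ and hence commute. The spectral estimate for operators of the type $\psi(A)T^*$, as established in the proof of \cref{lem:spectral-half}, then yields
\[
\norm{g(A)A^\nu T^*}_{\Linop(H_2,H_1)} \le \sup_{0\le t\le t_0} \sqrt{t}\,|\psi(t)| = \sup_{0\le t\le t_0} t^{\nu+1/2} g(t).
\]

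Next I would observe that, since $0\le\nu\le\tfrac12$, the exponent satisfies $\nu+\tfrac12\in[0,1]$, so the remaining scalar quantity is precisely the one controlled in the proof of \cref{lem:spectral-nu} with $\nu$ replaced by $\nu+\tfrac12$; concretely, the same Cauchy--Schwarz/arithmetic--geometric argument gives $t^{\nu+1/2}g(t)\le\left(\sum_{j=m}^l\alpha_j^{-1}\right)^{-\nu-1/2}$ for all $t\ge 0$. Combining this with the previous display produces the claimed bound.

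The only delicate point is the reduction in the first step: attaching $T^*$ must turn the spectral weight $t^\nu$ coming from $A^\nu$ into $t^{\nu+1/2}$. I would verify this either through the cited spectral bound for $\psi(A)T^*$, or, if one prefers an entirely self-contained argument, by squaring and passing to the adjoint, $\norm{g(A)A^\nu T^*}^2 = \norm{g(A)A^\nu T^*T A^\nu g(A)} = \norm{g(A)^2 A^{2\nu+1}} = \left(\sup_{t} t^{\nu+1/2}g(t)\right)^2$, where I use $T^*T=A$ and the commutativity of functions of $A$. Beyond this bookkeeping no real difficulty arises: no extra multiplicative constant is incurred at either step (in particular the factor $\tfrac12$ of \cref{lem:spectral-half} does not enter, as it belongs to the scalar bound there rather than to the reduction), and the estimate is sharp, as the case $m=l$ already illustrates.
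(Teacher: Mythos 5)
Your proof is correct and coincides with the argument the paper leaves implicit: \cref{lem:spectral-halfnu} is stated there without proof as a direct consequence of \cref{lem:spectral-nu,lem:spectral-half}, and your reduction via the functional calculus (equivalently via $\norm{B}^2 = \norm{BB^*}$ with $B = g(A)A^\nu T^*$) to the scalar estimate $t^{\nu+1/2}g(t)\le\bigl(\sum_{j=m}^{l}\alpha_j^{-1}\bigr)^{-\nu-1/2}$, valid because $\nu+\tfrac12\in[0,1]$, is exactly the intended mechanism, with the correct observation that the factor $\tfrac12$ from \cref{lem:spectral-half} does not enter. The only blemish is your closing sharpness remark: at $\nu=0$ the corollary's bound exceeds that of \cref{lem:spectral-half} by a factor of $2$, so the estimate is not sharp for all $\nu$, but this aside has no bearing on the validity of the proof.
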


\section*{Acknowledgments} 
The authors would like to thank the two anonymous reviewers for their constructive comments which led to notable improvements of the paper.

\bibliographystyle{jnsao}
\bibliography{BouligandLevenbergMarquardt}

\end{document}